\newcommand{\spec}[0]{\operatorname{Spec}}
\newcommand{\Aut}[0]{\operatorname{Aut}}
\newcommand{\sing}[0]{\operatorname{sing}}
\newcommand{\mM}{{\mathcal{M}}}
\newcommand{\tO}{{\widetilde{\Omega}}}
\newcommand{\tOt}{{\widetilde{\Omega}_{\cX_t}}}
\newcommand{\tOo}{{\widetilde{\Omega}_{\cX_0}}}
\newcommand{\kss}{{\rm kss}}
\newcommand{\kps}{{\rm kps}}
\newcommand{\la}{{\langle}}
\newcommand{\ra}{{\rangle}}
\newcommand{\rdiv}{{\rm div}}
\newcommand{\cL}{{\mathcal{L}}}
\newcommand{\mU}{{\mathcal{U}}}
\newcommand{\mV}{{\mathcal{V}}}
\newcommand{\tcX}{{\widetilde{\mathcal{X}}}}
\newcommand{\cZ}{{\mathcal{Z}}}
\newcommand{\pdJd}{{\frac{\sqrt{-1}}{2\pi}\partial\bar{\partial}}}
\newcommand{\ku}{{\mathfrak{u}}}
\newcommand{\tku}{\widetilde{\ku}}
\newcommand{\kU}{{\mathfrak{U}}}
\newcommand{\cX}{{\mathcal{X}}}
\newcommand{\cY}{\mathcal{Y}}
\newcommand{\mY}{{\mathcal{Y}}}
\newcommand{\mL}{{\mathcal{L}}}
\newcommand{\mX}{{\mathcal{X}}}
\newtheorem{thm}{Theorem}[section]
\newtheorem{lem}[thm]{Lemma}
\newtheorem{cor}[thm]{Corollary}
\newtheorem{prop}[thm]{Proposition}
\theoremstyle{definition}
\newtheorem{defn}[thm]{Definition}
\newtheorem{rem}[thm]{Remark}
\newtheorem*{ack}{Acknowledgments}      
\newtheorem{defn-thm}[thm]{Definition--Theorem}  
\newtheorem{defn-lem}[thm]{Definition--Lemma}  
\newcommand{\QQ}{\mathbb{Q}}
\newcommand{\PP}{\mathbb{P}}
\newcommand{\CC}{\mathbb{C}}
\newcommand{\sL}{\mathscr{L}}
\newcommand{\sO}{\mathscr{O}}
\newcommand{\sI}{\mathscr{I}}
\newcommand{\sJ}{\mathscr{J}}
\newcommand{\cM}{\mathcal{M}}
\newcommand{\cU}{\mathcal{U}}
\newcommand{\cV}{\mathcal{V}}
\newcommand{\reg}{\mathrm{reg}}
\newcommand{\ti}{\widetilde}
\newcommand{\aut}{\mathrm{Aut}}
\newcommand{\faut}{\mathfrak{aut}}
\newcommand{\chow}{\mathrm{Chow}}
\newcommand{\hilb}{\mathrm{Hilb}}
\newcommand{\CM}{\mathrm{CM}}
\newcommand{\SL}{\mathrm{SL}}
\newcommand{\WP}{\mathrm{WP}}
\newcommand{\Red}{}
\newcommand{\Blue}{}
\newcommand{\lam}{\lambda}
\newcommand{\Lam}{\Lambda}
\theoremstyle{remark}
\newcommand{\sddb}{{\sqrt{-1}\partial\bar{\partial}}}
\begin{document}

\title{Quasi-projectivity of the moduli space of smooth K\"{a}hler-Einstein Fano manifolds}

\author{Chi Li}
\address{Mathematics Department, Stony Brook University\\ Stony Brook NY, 11794-3651\\ USA}
\email{chi.li@stonybrook.edu}

\author{Xiaowei Wang}
\address{Department of Mathematics and Computer Sciences\\
           Rutgers University, Newark NJ 07102-1222\\ USA}
\email{xiaowwan@rutgers.edu}

\author{Chenyang Xu}
\address{Beijing International Center of Mathematics Research\\ 5 Yiheyuan Road, Haidian District, Beijing, 100871, China}
\email{cyxu@math.pku.edu.cn}

\date{\today}
\maketitle
\begin{abstract}

In this note, we prove that there is a canonical continuous Hermitian metric on the CM line bundle over the proper moduli space $\overline{\mM}$ of smoothable K\"{a}hler-Einstein Fano varieties. The curvature of this metric is the Weil-Petersson current, which exists as a positive (1,1)-current on $\overline{\mM}$ and extends the canonical Weil-Petersson current on the moduli space parametrizing smooth K\"{a}hler-Einstein Fano manifolds $\mM$.   As a consequence, we show that the CM line bundle is nef and big on $\overline{\mM}$ and its restriction on $\mM$ is ample.
\end{abstract}

\tableofcontents

\section{Introduction}
The study of moduli spaces of polarized varieties is a  fundamental topic in algebraic geometry. The most classical case is
the moduli space of Riemann surfaces of genus$\ge 2$, whose compactification is an Deligne-Mumford stack admitting a projective coarse moduli space. People have
been trying to generalize this picture to higher dimensions, leading to the development of KSBA compactification of moduli space of canonically polarized varieties (see \cite{Kollar13}). In \cite{Vie95}, Viehweg proved a deep result that the moduli space of polarized manifolds with nef canonical line bundles is quasi-projective.
Building on the fundamental work of \cite{Kollar90} and the development of Minimal Model Program, it is proved in \cite{Fuj12} that the KSBA compactification is projective.

On the other hand, there are negative results concerning the projectivity of moduli spaces. Koll{\'a}r \cite{Kollar06} showed that the moduli space of polarized manifolds may not be quasi-projective. In particular, he proved that any toric variety can be
a moduli space of polarized uniruled manifolds. The quasi-projectivity of moduli is still open for polarized manifolds that are not uniruled and whose canonical line bundles are not nef.

Differential geometric methods have also played important roles in studying moduli space of complex manifolds. For examples, the moduli space $\mM_g$ has been studied using Teichm\"{u}ller spaces equipped with Weil-Petersson metrics, and the moduli spaces of Calabi-Yau manifolds and its Weil-Petersson metrics were studied by Tian and Todorov. Moreover, Tian's results in \cite{Tian1987} imply that there is a Hermtian line bundle on the moduli space of Calabi-Yau manifolds whose curvature form is  the Weil-Petersson metric.
Fujiki-Schumacher later \cite{FS90} considered the more general case of moduli space of K\"{a}hler manifolds admitting constant scalar curvature K\"{a}hler (cscK) metrics. They proved that the natural Weil-Petersson metric is always K\"{a}hler by interpreting it as the Chern curvature of  a determinant line bundle equipped with Quillen metric.
As a consequence, it was proved in \cite{FS90} that any compact subvariety in the moduli space of cscK manifolds with discrete automorphisms is projective.
However, all the cases considered above  requires the fiberation to be smooth, which is not the case in general. In \cite{Tian1997} Tian studied similar determinant line bundles in a singular setting and introduced the notion of CM line bundle, which will be denoted by $\lambda_{\rm CM}$ from now on in this paper. 

 It follows from Koll\'{a}r's negative result, for uniruled manifolds extra constraints \Red{must be imposed} in order for the moduli space to be quasi-projective/projective. However, Tian's study of CM line bundle and Fujiki-Schumacher's results suggest  that the moduli space of manifolds admitting canonical metrics could be quasi-projective. In this paper, we  confirm this speculation for the moduli space of Fano K\"{a}hler-Einstein (KE)  manifolds, \Blue{which  was first conjectured by Tian in \cite{Tian1997}.}

Fano manifolds in dimension 2 are called del Pezzo surfaces. It was proved in \cite{Tian1990} that a smooth del Pezzo surface admits a K\"{a}hler-Einstein metric if and only if its automorphism is reductive. Recently, based on the study of degenerations of smooth K\"{a}hler-Einstein del Pezzo surfaces in \cite{Tian1990}, proper moduli spaces of smoothable K\"{a}hler-Einstein del Pezzo varieties were constructed in \cite{OSS}. Moreover, it is shown in \cite{OSS} that these proper moduli spaces are actually projective except possibly for the case of del Pezzo surfaces of degree 1.

The higher dimensional generalization of the results in \cite{Tian1990} and \cite{OSS} was made possible thanks to the celebrated solutions to  the Yau-Tian-Donaldson conjecture (\cite{CDS1}, \cite{CDS2}, \cite{CDS3}, \cite{Tian2014}). The moduli space of higher dimensional smooth K\"{a}hler-Einstein Fano manifolds, denoted by $\mathcal{M}$
from now on, was studied in \cite{Tian2012}, \cite{Don2013}, \cite{Odaka14a}. 
More recently, a \Blue{proper algebraic compactification $\overline{\mM}$ of $\mM$ was constructed in \cite{LWX} (see also \cite{Odaka14b}). It is further believed that $\overline{\mM}$ should be projective (see \cite{OSS}, \cite{LWX}, \cite{Odaka14b}).} This paper is a step towards establishing this. The main technical result of this paper is the following descent and extension result.
\begin{thm}\label{thmext}
The CM line bundle $\lambda_{\rm CM}$ descends to a line bundle $\Lambda_{\rm CM}$ on the proper moduli space $\overline{\mM}$. Moreover, there is a canonically defined continuous Hermitian metric $h_{\rm DP}$ on $\Lambda_{\rm CM}$ whose curvature form is a positive current $\omega_{\rm WP}$ on $\overline{\mM}$ which extends the canonical Weil-Petersson current $\omega_{\rm WP}^{\circ}$ on $\mM$. 
\end{thm}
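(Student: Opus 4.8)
\textbf{Descent of $\lambda_{\rm CM}$.} The plan is to first realize $\lambda_{\rm CM}$, following Paul--Tian and Phong--Ross--Sturm, through Deligne pairings of the relative anticanonical class: over any flat family $\pi\colon\cX\to S$ of $n$-dimensional $\QQ$-Fano varieties one has $\lambda_{\rm CM}\sim\la-K_{\cX/S}^{\,n+1}\ra$ up to a universal positive constant and lower Deligne pairings, so $\lambda_{\rm CM}$ lives on the moduli stack of K-polystable Fano varieties underlying $\ov{\mM}$ (constructed in \cite{LWX}). To descend it to the good (coarse) moduli space $\ov{\mM}$ I would invoke the standard descent criterion: it suffices that for every K-polystable $X$ the reductive group $\Aut(X)$ act trivially on the fibre $(\lambda_{\rm CM})_{[X]}$. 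The weight of a one-parameter subgroup of $\Aut(X)$ on that line is, up to a positive universal constant, the Donaldson--Futaki invariant of the associated product test configuration, i.e.\ the classical Futaki invariant of the generating holomorphic vector field, which vanishes because $X$ admits a K\"ahler--Einstein metric in $c_1(X)$; the triviality of the finite component-group part of $\Aut(X)$ is a further point requiring care (handled as in \cite{LWX}, possibly after passing to a fixed power that then turns out to be unnecessary). This yields a descended line bundle $\Lambda_{\rm CM}$ on $\ov{\mM}$.

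\textbf{The metric over $\mM$.} Next I would build the metric on the smooth locus. Working \'etale-locally on $\mM$ with the universal family $\cX\to\mM$, equip $-K_{\cX/\mM}$ with the Hermitian metric whose restriction to each fibre $X_t$ has curvature the normalized K\"ahler--Einstein form $\omega_t\in c_1(X_t)$; since $\omega_t$ is unique up to $\Aut(X_t)$ and the Deligne-pairing metric construction is functorial and equivariant, the induced Deligne metric $h_{\rm DP}$ on $\Lambda_{\rm CM}$ is well defined and independent of choices. By the curvature formula for Deligne pairings, its curvature is (up to a positive constant) the fibre integral $\int_{\cX/\mM}(\text{curvature of the chosen metric})^{n+1}$ modulo lower pairings; decomposing the curvature into fibre and transverse directions and using the Einstein identity $\Ric(\omega_t)=\omega_t$, the Ricci contributions collapse and one is left with a fixed positive multiple of the Weil--Petersson form. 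Thus on $\mM$ the metric $h_{\rm DP}$ is continuous (smooth on the orbifold locus) with curvature the smooth positive Weil--Petersson form $\omega_{\rm WP}^{\circ}$, recovering the Fujiki--Schumacher picture \cite{FS90} in the Fano setting.

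\textbf{Extension across the boundary.} This is the crux. I would work \'etale-locally near a point $[X_0]\in\ov{\mM}\setminus\mM$: after base change there is a smoothing $\pi\colon\cX\to B$ over a smooth base $B\ni 0$ with $\cX_0=X_0$ a K-polystable $\QQ$-Fano variety (hence klt) and $\cX_t$ a smooth K\"ahler--Einstein Fano for $t\in B^{\circ}:=B\cap\mM$, realizing a neighborhood of $[X_0]$. The essential step is to produce a \emph{continuous} Hermitian metric on $-K_{\cX/B}$ over the \emph{whole} total space whose fibrewise curvature is the (weak) K\"ahler--Einstein form of $\cX_t$ for every $t\in B$, including $t=0$. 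This amounts to solving the relative complex Monge--Amp\`ere equation in the family and proving the continuity of its bounded solution across $X_0$; the inputs are a uniform $L^{\infty}$ bound on the K\"ahler--Einstein potentials along the degeneration (from Ko\l odziej-type estimates together with uniform control of the $\alpha$-invariant, Sobolev and Skoda constants, which hold since the central fibre is klt and K-polystable, in the spirit of \cite{CDS1,CDS2,CDS3}) and the stability of solutions of Monge--Amp\`ere equations under the degeneration, with the limit pinned down as the K\"ahler--Einstein potential of $X_0$ by K-polystability and uniqueness. Since the relative Deligne pairing $\la-K_{\cX/B}^{\,n+1}\ra$ makes sense ($X_0$ being klt) and the Deligne metric depends continuously on the metric of the line bundle, the resulting metric $h_{\rm DP}$ on $\Lambda_{\rm CM}$ is continuous on all of $B$ and agrees over $B^{\circ}$ with the metric of the previous step.

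\textbf{Identifying the curvature current, and the main obstacle.} Finally, to see that the curvature extends as a positive current, note that the local weight $\varphi$ of $h_{\rm DP}$ in a holomorphic frame of $\Lambda_{\rm CM}$ is continuous on $B$ and plurisubharmonic on $B^{\circ}$ by the computation over $\mM$; a continuous function that is plurisubharmonic off a proper analytic subset is plurisubharmonic, so $\varphi\in\PSH(B)\cap C^{0}(B)$ and $\omega_{\rm WP}:=\pdJd\,\varphi\ge0$ is a closed positive $(1,1)$-current on $B$ extending $\omega_{\rm WP}^{\circ}$. Patching over a cover of $\ov{\mM}$, and using that the construction on $\mM$ is intrinsic so that the extension is unique, produces the canonical metric $h_{\rm DP}$ on $\Lambda_{\rm CM}$ and the current $\omega_{\rm WP}$ on $\ov{\mM}$. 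I expect essentially all of the difficulty to sit in the extension step --- the uniform a priori estimates along the degeneration and the family-continuity of the weak K\"ahler--Einstein metrics --- for which the compactness results underlying the Yau--Tian--Donaldson correspondence \cite{CDS1,CDS2,CDS3} are the decisive ingredient.
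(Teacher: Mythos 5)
Your overall architecture --- realize $\lambda_{\rm CM}$ via Deligne pairings, equip it with the KE Deligne metric, prove continuity across the boundary, then apply the removable-singularity theorem for plurisubharmonic functions --- matches the paper's. But the route you propose through the crucial extension step is not the one the paper takes, and as written I do not think it closes.

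\textbf{Where the paper differs and why it matters.} You reduce the extension to a uniform $L^{\infty}$ bound for the relative Monge--Amp\`ere equation via ``Ko\l odziej-type estimates together with uniform control of the $\alpha$-invariant, Sobolev and Skoda constants.'' The paper does something different and more specific: it replaces the a priori-estimate route by Tian's partial $C^{0}$-estimate (Donaldson--Sun, Tian), which gives two-sided bounds on the Bergman kernel $\rho_m$ \emph{uniformly in the family}, and then uses the uniqueness of the Gromov--Hausdorff limit from \cite{LWX} (Lemma \ref{unique}) to turn the sequential GH convergence into genuine continuity of the map $t\mapsto A(t)^{-1/2}$, hence of the fibrewise potential $\tku$ (Proposition \ref{bdU}). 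This is crucial: boundedness is not enough, one needs \emph{continuity} of $\tku$ on the total space to make the change-of-metric formula \eqref{comform} give a continuous metric; your Ko\l odziej argument would at best give boundedness and it is not clear one can make the $\alpha$/Sobolev constants uniform near a klt degeneration without essentially re-proving partial $C^0$. Second --- and this is the part your proposal omits entirely --- passing from continuity of $\tku$ to continuity of the fibre-integral potential $\kU$ requires controlling fibre integrals near $\cX^{\sing}$. The paper isolates this as Lemma \ref{kltconv} (Appendix): $\lim_{t\to0}\int_{\cX_t}\tO_t=\int_{\cX_0}\tO_0$, proved using Abramovich--Karu's toroidal reduction and inversion of adjunction for higher-codimensional klt subvarieties. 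This is a genuinely birational-geometric input, not a consequence of general ``stability of Monge--Amp\`ere equations under degeneration.'' Without an argument of this type the extension step has a gap.

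\textbf{Smaller issues.} Your parenthetical that passing to a power $k$ ``turns out to be unnecessary'' is incorrect: the paper needs $k=k(m,\chi)$ (Lemma \ref{uni-k}) to kill the action of $\pi_0(\Aut(\cX_z))$, and only $\lambda_{\rm CM}^{\otimes k}$ descends as a line bundle (so $\Lambda_{\rm CM}$ is a priori a $\QQ$-line bundle on $\overline{\cM}$). Also, your extension argument is phrased over a smooth base $B$ with a single smoothing, but the neighborhood of $[X_0]\in\overline{\cM}$ is a local GIT quotient $\cU_z\sslash\Aut(\cX_z)_0$, not a smoothing base; the paper's descent of continuity and positivity goes through this local quotient presentation, uses the $\Aut(\cX_z)$-equivariance of the metrized Deligne pairing (Lemma \ref{isom}) to make $h_{\rm DP}$ well-defined on $\overline{\cM}$, and verifies positivity of the curvature current (in the sense of Definition \ref{defcur} on a singular space) by restricting to analytic discs $\tau\colon\Delta\to\overline{\cM}$ and lifting them into $Z^{\rm kps}$ via \cite[3.1]{LWX}. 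Your ``patching over a cover'' sentence glosses over this step, which needs the local-GIT structure to make sense of the current on the singular good moduli space.
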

Note that although we use the notion of current on singular complex spaces defined in Definition \ref{defcur},
$\omega_{\WP}^{\circ}$ is actually a smooth K\"{a}hler-metric on a dense open set $\mM'$ of $\mM$ (see Section \ref{WP-CM} and the proof of Theorem \ref{CMemb} in Section 6). So equivalently, we can say that $\omega_{\WP}$ extends the canonical smooth K\"{a}hler metric $\omega_{\rm WP}^\circ|_{\mM'}$ on $\mM'$.

With $(\Lambda_{\rm CM}, h_{\rm DP})$ at hand (or equivalently, Weil-Petersson current $\omega_{\rm WP}$ with controlled behavior), we can apply a quasi-projective criterion as Theorem \ref{NM} to get the following result.
\begin{thm}\label{CMemb}
$\Lambda_{\rm CM}$ is nef and big over $\overline{\mM}$. Moreover, for the normalization morphism $n: \overline{\mM}^{\rm n} \to \overline{\mM}$ which induces an isomorphism over $\mM$, the rational map $\Phi_{|n^*(m\Lambda_{\rm CM})|} $ embeds $\mM$ into $\mathbb{P}^{N_m-1}$ for $m\gg 1$ with $N_m=\dim H^0(\overline{\mM}^{\rm n} , n^*(m \Lambda_{\rm CM}) )$. In particular, $\mM$ is quasi-projective.
\end{thm}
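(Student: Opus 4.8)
The plan is to feed the output of Theorem \ref{thmext} into the quasi-projectivity criterion (Theorem \ref{NM}). First I would establish nefness of $\Lambda_{\rm CM}$: by Theorem \ref{thmext} we have a continuous Hermitian metric $h_{\rm DP}$ on $\Lambda_{\rm CM}$ whose curvature current $\omega_{\rm WP}$ is positive (a positive $(1,1)$-current on the compact complex space $\overline{\mM}$). Pulling back to any curve $C \to \overline{\mM}$, the line bundle $n_C^*\Lambda_{\rm CM}$ carries a continuous metric with positive curvature current, hence has non-negative degree; since nefness can be tested on curves, $\Lambda_{\rm CM}$ is nef. (One should pass through the normalization $\overline{\mM}^{\rm n}$ here, as the statement does, so that standard intersection theory applies.) The pullback $n^*\Lambda_{\rm CM}$ is then nef on the normal proper variety $\overline{\mM}^{\rm n}$.

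Next I would prove bigness. The key point is that $\omega_{\rm WP}$ restricts on the dense open smooth locus $\mM' \subset \mM$ to the \emph{strictly positive} smooth Weil-Petersson K\"{a}hler form $\omega_{\rm WP}^\circ|_{\mM'}$ (as remarked after Theorem \ref{thmext}). Hence the top self-intersection $(n^*\Lambda_{\rm CM})^{\dim \overline{\mM}}$, computed as $\int_{\overline{\mM}^{\rm n}} \omega_{\rm WP}^{\dim}$ via the continuous metric, is bounded below by $\int_{\mM'} (\omega_{\rm WP}^\circ)^{\dim} > 0$ because the Weil-Petersson volume of a nonempty open subset of the K\"{a}hler manifold $(\mM', \omega_{\rm WP}^\circ)$ is strictly positive. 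A nef line bundle on a projective variety with positive top self-intersection is big, so $n^*\Lambda_{\rm CM}$ is nef and big; pushing forward, $\Lambda_{\rm CM}$ is nef and big on $\overline{\mM}$. The delicate point to handle carefully is that $\overline{\mM}$ is only known to be a proper \emph{algebraic space/complex space}, so these intersection-theoretic statements should be phrased on the normalization $\overline{\mM}^{\rm n}$ (which is where Theorem \ref{CMemb} places the linear system) and one must justify that the Monge-Amp\`ere mass of a continuous psh-type metric agrees with the algebraic self-intersection number; this is standard (Bedford-Taylor theory together with the fact that $\Lambda_{\rm CM}$ is a genuine line bundle) but needs to be invoked.

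Finally, to get that $\Phi_{|n^*(m\Lambda_{\rm CM})|}$ \emph{embeds} $\mM$ for $m \gg 1$, nef and big alone is not enough — it only gives a birational map contracting curves of degree zero. This is exactly what the quasi-projectivity criterion Theorem \ref{NM} is designed to upgrade: the criterion takes a nef line bundle with a continuous metric whose curvature is a positive current that is \emph{strictly positive (a genuine K\"{a}hler form)} on a Zariski-open set, and concludes that the associated rational map is an isomorphism onto its image over that open set for $m$ large. Applying it with the open set $\mM' \subset \mM$ (and then noting that points of $\mM \setminus \mM'$ are separated as well, or enlarging the argument to all of $\mM$ using that $\omega_{\rm WP}^\circ$ is a smooth K\"{a}hler form on the whole smooth Deligne-Mumford locus $\mM$) yields the embedding $\mM \hookrightarrow \PP^{N_m - 1}$, whence $\mM$ is quasi-projective as a locally closed subvariety of projective space. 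The main obstacle is verifying that the hypotheses of Theorem \ref{NM} are met precisely — in particular the regularity/strict-positivity of $\omega_{\rm WP}$ on $\mM$ and the continuity of $h_{\rm DP}$ up to the boundary — but all of this is furnished by Theorem \ref{thmext}, so the argument here is a matter of correctly citing and assembling those inputs.
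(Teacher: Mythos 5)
Your proof gets nefness and bigness right, but you have not actually verified the hypothesis of Theorem \ref{NM}, and the gap is substantive. Theorem \ref{NM} is a Nakai--Moishezon-type intersection criterion: it requires $L^{\dim Z}\cdot Z > 0$ for \emph{every} irreducible subspace $Z$ that meets $M^\circ=\mM$, not merely $L^{\dim M}>0$ (bigness). Your argument establishes the top self-intersection by integrating $\omega_{\rm WP}^{\dim}$ over $\mM'$, which gives bigness of $n^*\Lambda_{\rm CM}$, but it says nothing about $L^{\dim Z}\cdot Z$ for lower-dimensional $Z$ meeting $\mM$. Nef and big is, as you yourself note, not enough to separate points; and the hypothesis you would need to invoke Theorem \ref{NM} with $M^\circ=\mM$ is precisely the positivity of $\int_Z \omega_{\rm WP}^{\dim Z}$ for such $Z$, which does not follow from the K\"ahler property on $\mM'$ alone (a subvariety $Z$ can meet $\mM$ without meeting $\mM'$, or meet $\mM'$ in a set where the restriction of $\omega_{\rm WP}$ to $Z$ degenerates — positive definiteness of a $(1,1)$-form on the ambient open set does not by itself give positive mass on a lower-dimensional subvariety sitting partly in the singular/boundary locus, one must know the form is genuinely K\"ahler on an open piece of $Z$).

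This is exactly the step the paper spends its effort on: for an irreducible $Y$ meeting $\mM$, they pass to a local quotient presentation $f_{z'}:\cU_{z'}\to\cV_{[z']}$, take a dominating component of $f_{z'}^{-1}(\cV_{[z']}\cap Y)$, use Bertini--Sard to find a smooth point $z$ where $df_{z'}$ is surjective, produce a slice $S$ transversal to the $\aut(\cX_{z'})$-orbits, and invoke Kuranishi local completeness to conclude that $\cX|_S\to S$ is \emph{generically effective}. Only then can one cite Section \ref{WP-CM} to say $\omega_{\rm WP}|_Y$ is a smooth K\"ahler form on a dense open subset of a neighborhood $V_{[z]}\subset Y\cap\mM$, which yields $L^{\dim Y}\cdot Y=\int_Y\omega_{\rm WP}^{\dim Y}>0$ via Proposition \ref{GE}. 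Your proposal replaces this with the unsupported assertion that ``$\omega_{\rm WP}^\circ$ is a smooth K\"ahler form on the whole smooth Deligne--Mumford locus $\mM$''; the paper only establishes smoothness and K\"ahlerity on a dense open $\mM'\subsetneq\mM$, and the passage from that to strict positivity along an \emph{arbitrary} subvariety $Y$ meeting $\mM$ is the missing argument. A secondary point: you characterize Theorem \ref{NM} as taking a metric with curvature positive on an open set as its input, but it is a purely intersection-theoretic statement; the metric enters only as the tool for verifying the numerical hypothesis, and that verification is where the work lies.
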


In some sense, the quasi-projectivity of $\mM$ in Theorem \ref{CMemb} could be seen as a consequence of Tian's partial $C^0$-estimates recently established in the fundamental works of Donaldson-Sun \cite{DS2012} and Tian \cite{Tian13}.
Actually such kind of implication was stated without proof in \cite[end of Section 8]{Tian1997} which used the notion of CM stability (introduced in \cite{Tian1997}).  Since we now know that CM stability is equivalent to the existence of K\"{a}hler-Einstein metrics on Fano manifolds (see \cite{Tian14}, \cite{Pa2012}), Viewheg's approach to quasi-projectivity could be applied to study this problem. However, because of the subtlety pointed out by Koll{\'a}r \cite{Kollar06}, it is still not clear to us how to deduce the quasi-projectivity directly using CM stability.  On the other hand, if we only look at the open locus of $\mM$ which parametrizes K\"ahler-Einstein Fano manifolds with finite automorphism groups, then \cite{Don01} and \cite{Odaka14a} have already shown that it is quasi-projective as we know the Fano manifolds it parametrizes are all asymptotically Chow stable by \cite{Don01}. However, we know that if we drop the finite automorphism assumption, there exists K\"ahler-Einstein Fano manifold which is asymptotically Chow unstable (see \cite{OSY}).

Our proof of Theorem \ref{thmext}, which still depends heavily on the recent development in the theory of K\"{a}hler-Einstein metrics on Fano varieties, is also inspired by the work of Schumacher-Tsuji \cite{ST04} and Schumacher \cite{Sch12}. In \cite{Sch12}, Schumacher re-proved the quasi-projectivity of $\mM^{-}$ which is the moduli space of canonically polarized manifolds by using some compactification of $\mM^{-}$ and extension of Weil-Petersson metric. 
Our argument uses a similar approach. 
First, by applying the theory of Deligne pairings,  for any smooth variey $S$ together with a flat family of K\"ahler-Einstein Fano varieties $\cX\to S$ containing an open dense $S^{\circ}\subset S$ such that the fibers of $\cX|_{S^\circ}\to S^\circ$ are all K\"ahler-Einstein Fano manifolds, we can construct a Hermitian metric $h_{\rm DP}$ on the CM line bundle $\lambda_{\rm CM}\to S$ whose restriction to $S^{\circ}$ is the classical Weil-Petersson metric.
Second, the partial-$C^{0}$ estimate established in \cite{DS2012, Tian13} together with an extension of  continuity results in \cite{Li13} allow us to show that this metric is indeed {\em continuous} whose curvature form can be extended to a positive current on $S$.
Third,  by using the local GIT description on the canonical compactification $\overline\cM$ constructed in \cite{LWX}, we can  descend the CM line bundle $\lambda_{\rm CM}$ and the metric $h_{\rm DP}$ to construct an Hermitian line bundle $(\Lambda_{\rm CM}, h_{\rm DP})$ on $\overline{\mM}$, and the curvature form of $(\Lambda_{\rm CM}, h_{\rm DP})$ is the Weil-Petersson current we want. The descending construction is partly inspired by section \cite[Section 11]{FS90} and  based on Kempf's descent lemma proved in \cite{DN89}.
Finally, to obtain the quasi-projectivity, we establish the quasi-projectivity criterion Theorem \ref{NM} for \Red{\em normal} algebraic spaces, which 
can be regarded as \Red{an algebro-geometric version}  of the analytic criterion  in \cite[Section 6]{ST04}.


The paper is organized in the following way: In the next section, we derive some estimate which will play the key role in proving the extension of Weil-Petersson metric.
 In Section \ref{s-can}, assuming the existence of a universal family over a parameter space we  obtain a canonical {\em continuous} Hermitian metric on the CM line bundle with curvature form being a positive current over the base via  the formalism of Deligne pairing . In Section \ref{s-descend}, we descend the metrized CM line bundle to $\overline\cM$ and prove Theorem \ref{thmext} based on a crucial uniform convergence lemma established in Section \ref{append}. In Section \ref{s-proof}, we finish the proof of Theorem \ref{CMemb} by applying the quasi-projective criterion of Theorem \ref{NM}.


\begin{ack}\Red{\Blue{The first author would like to thank Robert Berman for bringing the related question of continuity for Ding energy to his attention. }We would like to thank Gang Tian for useful comments on the history of CM line bundles. We also would like to thank Dan Abramovich, Aise Johan de Jong, Mircea Musta\c{t}\v{a} for helpful discussions.
The first author is partially supported by NSF: DMS-1405936.  The second author is partially supported by a Collaboration Grants for Mathematicians from Simons Foundation.    The third author is partially supported by the grant `The National Science Fund for Distinguished Young Scholars'. }
\end{ack}

\section{Plurisubharmonic functions on complex spaces}\label{secPSH}
For later reference we recall the following known facts. In this section $X$ is a possibly singular complex space. We will denote the open unit disk by $\Delta=\{z\in \mathbb{C}; |z|<1\}$.
\begin{defn}[{\cite[Def. 1, Section 4.1]{GR56}}]\label{defpsh}
A function $\psi(x)$ on $X$ is called plurisubharmonic on $X$, if the following conditions are satisfied:
\begin{enumerate}
\item The value of $\psi(x)$ is real number or $-\infty$.
\item $\psi(x)$ is upper semi-continuous at any point $x_0\in X$: $\displaystyle{\overline{\lim_{x\rightarrow x_0}}} \psi(x)\le \psi(x)$.
\item For any holomorphic map $\tau: \Delta\rightarrow X$, the function $\psi\circ \tau$ is subharmonic on $\Delta$.
\end{enumerate}
\end{defn}
When $X$ is smooth, the above definition recovers the ordinary definition of plurisubharmonic functions on smooth complex manifolds.
\begin{rem}
Note that in the literature, the plurisubharmonic functions in Definition \ref{defpsh} are sometimes called weakly plurisubharmonic functions. The plurisubharmonic functions are then defined as local restrictions of plurisubharmonic functions on $\mathbb{C}^N$ under local embeddings of $X$ into $\mathbb{C}^N$. However, by  a basic result by Fornaess-Narasimhan
\cite[Theorem 5.3.1]{FN80} we know that weakly plurisubharmonic functions are the same as plurisubharmonic functions.
\end{rem}
We have the following important Riemann extension theorem for plurisubharmonic functions (see also \cite[Theorem 5.24]{Dem}):
\begin{thm}[{\cite[Satz 3, Section 1.7]{GR56}}]\label{GRext}
Suppose $D$ is a proper subvariety of $X$ and $\psi^\circ$ is a plurisubharmonic function on $X\setminus D$. Assume that for each point $x\in D$, there exists a neighborhood $U$ such that $\psi$ is bounded from above on $U\setminus(U\cap D)$. Then $\psi^\circ$ extends uniquely to a plurisubharmonic function over $X$.
\end{thm}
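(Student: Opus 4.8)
The plan is to treat uniqueness and existence separately; uniqueness is soft and, moreover, forces the formula that the extension must satisfy.

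\emph{Uniqueness.} Since $D$ is closed and nowhere dense, $X\setminus D$ is open and dense, so any plurisubharmonic $\psi$ on $X$ is determined by $\psi^\circ=\psi|_{X\setminus D}$. Concretely, at $x_0\in D$ condition (2) of Definition \ref{defpsh} gives $\psi(x_0)\ge\limsup_{X\setminus D\ni x\to x_0}\psi^\circ(x)$; conversely, normalizing a $1$-dimensional irreducible analytic germ through $x_0$ not contained in $D$---which exists because a nowhere dense subvariety contains no component of $X$---produces a holomorphic disc $\tau\colon\Delta\to X$ with $\tau(0)=x_0$ and $E:=\tau^{-1}(D)$ discrete in $\Delta$, and then condition (3), together with the sub-mean value inequality and the fact that $0$ is an accumulation point of $\Delta\setminus E$, yields $\psi(x_0)=(\psi\circ\tau)(0)\le\limsup_{\zeta\to 0}(\psi\circ\tau)(\zeta)\le\limsup_{X\setminus D\ni x\to x_0}\psi^\circ(x)$. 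Hence necessarily
\[
\psi(x_0)=\limsup_{X\setminus D\ni x\to x_0}\psi^\circ(x),\qquad x_0\in D.
\]

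\emph{Existence.} I would take $\psi$ to be given by this formula on $D$ and by $\psi^\circ$ on $X\setminus D$. The local boundedness above puts the right-hand side in $[-\infty,+\infty)$, so condition (1) holds; condition (2) (upper semicontinuity) is a routine $\limsup$ estimate---at $x_0\in D$ one approximates, for $x_n\to x_0$ in $D$, the value $\psi(x_n)$ by values $\psi^\circ(y_n)$ at points $y_n\in X\setminus D$ converging to $x_0$, and at points of the open set $X\setminus D$ there is nothing to prove. The substance is condition (3): subharmonicity of $\psi\circ\tau$ for every holomorphic $\tau\colon\Delta\to X$. This is a local assertion, and it is exactly the classical Riemann removable-singularity theorem for plurisubharmonic functions relative to an analytic subset (one may reduce to a domain in $\CC^N$ by a local embedding, extending $\psi^\circ$ plurisubharmonically across the ambient complement of $X$ and gluing the local extensions by the uniqueness just proved, or argue intrinsically as in \cite{GR56}). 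Its mechanism: the $\limsup$-extension is upper semicontinuous and locally bounded above; along every complex line, or disc, meeting $D$ in a discrete set it is subharmonic by the one-variable removable-singularity theorem for subharmonic functions; and a slicing/limiting argument---using that the analytic set $D$ has Lebesgue measure zero and is pluripolar, so that such transverse slices are dense---upgrades this to plurisubharmonicity everywhere. In particular $\psi\circ\tau$ is subharmonic for every $\tau$, so $\psi$ is the required extension.

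\emph{Main obstacle.} I expect the genuinely delicate point to be this last upgrade: showing the $\limsup$-extension is subharmonic along discs (or complex lines) contained in $D$, rather than merely upper semicontinuous there. Upper semicontinuity permits an ``upward spike'' over $D$, which would destroy subharmonicity, and ruling it out really does require exploiting that $D$ is a pluripolar, measure-zero analytic set, so that subharmonicity on a dense family of transverse slices propagates to the limiting slice. All of this is contained in \cite[Satz 3, Section 1.7]{GR56} (see also \cite[Theorem 5.24]{Dem}), which we therefore simply quote.
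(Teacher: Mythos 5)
The paper supplies no proof of this theorem---it is quoted directly from Grauert--Remmert \cite[Satz~3, Section~1.7]{GR56}, as the bracketed citation in the statement indicates---so there is no internal argument to compare your attempt against. Your sketch is nonetheless a faithful account of the classical proof. The uniqueness half is complete: upper semicontinuity gives $\psi(x_0)\ge\limsup_{X\setminus D\ni x\to x_0}\psi^\circ(x)$, and normalizing an irreducible curve germ through $x_0$ not contained in $D$ and applying the one-variable removable-singularity estimate to the subharmonic function $\psi\circ\tau$ gives the reverse inequality, forcing the extension formula. For existence you correctly single out the one genuinely delicate point, namely subharmonicity of $\psi\circ\tau$ for holomorphic discs $\tau$ with image contained in $D$ (where $\tau^{-1}(D)=\Delta$ and the one-variable removable-singularity theorem says nothing), and you correctly attribute its resolution---slicing transverse to the pluripolar analytic set $D$ and passing to the limit---to \cite{GR56}. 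One caution about your parenthetical: the suggested reduction to a domain in $\CC^N$ via a Fornaess--Narasimhan ambient psh extension of $\psi^\circ$ and ``gluing by uniqueness'' is shakier than it looks, since such ambient extensions live only on a neighborhood of $X\setminus D$ in $\CC^N$ and are far from unique, so the gluing step is not underwritten by the uniqueness you proved (which concerns extensions on $X$ across $D$, not ambient extensions); the intrinsic argument in \cite{GR56} sidesteps this. Since both you and the paper ultimately treat this theorem as a citation, the proposal is consistent with the source.
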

This theorem generalizes the following result which is useful for us too:
\begin{thm}[{Brelot, Grauert-Remmert \cite[Satz 5, Section 2.1]{GR56}}]\label{shext}
Assume $\psi^\circ$ is a subharmonic function on $\Delta\setminus\{0\}$ such that $\psi$ is bounded from above in a neighborhood of $0$, then the following function is the unique subharmonic extension of $\psi$ on $\Delta$:
\begin{equation}\label{shexp}
\def\arraystretch{1.2}
\psi(z)=\left\{\begin{array}{rl}
\psi^\circ(z) & \text{ for } z\neq 0,\\
\displaystyle{\overline{\lim_{z\rightarrow 0}}}\; \psi^\circ(z) & \text{ for } z=0.
\end{array}\right.
\end{equation}
\end{thm}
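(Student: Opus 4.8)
The plan is to reduce to a statement about $\psi$ near the single new point $0$: since subharmonicity is a local property and $\psi$ coincides with the subharmonic function $\psi^\circ$ on $\Delta\setminus\{0\}$, we only need to check upper semicontinuity and the sub-mean value inequality at the origin, and separately dispose of uniqueness. \emph{Uniqueness} is the easy half. If $u$ is any subharmonic function on $\Delta$ restricting to $\psi^\circ$ on $\Delta\setminus\{0\}$, then $u=\psi^\circ=\psi$ off the single point $0$, hence $u=\psi$ almost everywhere; since every subharmonic function $v$ recovers its value at a point $p$ as $v(p)=\lim_{\rho\to 0}\frac{1}{|B_\rho(p)|}\int_{B_\rho(p)}v$ (the sub-mean value inequality gives ``$\le$'', upper semicontinuity and Fatou give ``$\ge$''), two subharmonic functions agreeing a.e.\ agree everywhere, and so $u\equiv\psi$. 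In particular a subharmonic extension is forced to take the value $\limsup_{z\to 0}\psi^\circ(z)$ at $0$, which is exactly \eqref{shexp}; it remains only to show that this $\psi$ is subharmonic.

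\emph{Existence.} Let $r_0>0$ be such that $\psi^\circ$ is bounded above on $\{0<|z|\le r_0\}$, and set $L:=\limsup_{z\to 0}\psi^\circ(z)$. Upper semicontinuity of $\psi$ holds at $0$ by the definition of the extension and elsewhere is inherited from $\psi^\circ$, and $\psi$ is locally bounded above and not identically $-\infty$; so the only point at issue is the circle sub-mean value inequality at the origin,
\[
L\ \le\ \frac{1}{2\pi}\int_0^{2\pi}\psi^\circ(re^{i\theta})\,d\theta\qquad\text{for every }0<r<r_0 .
\]
Fix such an $r$. The restriction $g$ of $\psi^\circ$ to the circle $\{|z|=r\}$ is upper semicontinuous, bounded above, and integrable (the restriction of a subharmonic function to a circle contained in its domain is integrable). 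Choose continuous functions $g_n\downarrow g$ on the circle and let $h_n$ be the Poisson integral of $g_n$ on the disc $\{|z|<r\}$: it is harmonic inside, continuous up to the boundary, and equals $g_n\ge g=\psi^\circ$ on $\{|z|=r\}$. For $\varepsilon>0$ the function $w_{n,\varepsilon}:=\psi^\circ-h_n+\varepsilon\log(|z|/r)$ is subharmonic on the annulus $\{0<|z|<r\}$; it tends to $-\infty$ as $z\to 0$ (because $\psi^\circ$ is bounded above and $h_n$ is bounded near $0$, while $\varepsilon\log|z|\to-\infty$), and for every $\zeta$ with $|\zeta|=r$ one has $\limsup_{z\to\zeta}w_{n,\varepsilon}(z)\le \psi^\circ(\zeta)-g_n(\zeta)\le 0$ by upper semicontinuity of $\psi^\circ$ and continuity of $h_n$. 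The maximum principle on the punctured disc then forces $w_{n,\varepsilon}\le 0$; letting $\varepsilon\to 0$ gives $\psi^\circ\le h_n$ on $\{0<|z|<r\}$, hence $L=\limsup_{z\to 0}\psi^\circ(z)\le\limsup_{z\to 0}h_n(z)=h_n(0)=\frac{1}{2\pi}\int_0^{2\pi}g_n(re^{i\theta})\,d\theta$, and finally $n\to\infty$ together with monotone convergence yields $L\le\frac{1}{2\pi}\int_0^{2\pi}\psi^\circ(re^{i\theta})\,d\theta$. This is the desired inequality, so $\psi$ is subharmonic on $\Delta$.

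The main obstacle is precisely the harmonic-majorant step: because $\psi^\circ$ restricted to the circle $\{|z|=r\}$ is merely upper semicontinuous, one cannot solve the Dirichlet problem with it directly as boundary data, and one has to both approximate from above by continuous boundary data $g_n$ and insert the barrier term $\varepsilon\log(|z|/r)$ — which overwhelms any contribution from the puncture — in order to make the maximum principle applicable on the punctured disc. The remaining ingredients (local integrability and the solid mean value characterization of subharmonic functions, and the elementary boundary behaviour of Poisson integrals of continuous data) are standard.
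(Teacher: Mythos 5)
The paper does not prove Theorem~\ref{shext}; it is quoted directly from Grauert--Remmert \cite{GR56} (Satz 5, Section~2.1), so there is no in-paper proof to compare against. Your harmonic-majorant argument --- approximating the boundary data $g$ from above by continuous $g_n$, taking Poisson integrals $h_n$, and inserting the barrier $\varepsilon\log(|z|/r)$ so that the maximum principle applies on the punctured disc --- is a correct, standard, self-contained proof of this one-point removability theorem of Brelot type, and reducing everything to the circle sub-mean-value inequality at the origin is exactly the right thing to verify.

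Two small remarks on the write-up. First, the parenthetical claim that $g=\psi^\circ|_{\{|z|=r\}}$ is integrable is less routine than you make it sound: the familiar ``circular means are finite'' fact concerns circles whose \emph{center} lies in the domain, whereas here $0\notin\Delta\setminus\{0\}$. The claim is in fact true (for instance via the Riesz decomposition of $\psi^\circ$ near the circle together with the uniform bound on $\int_{|z|=r}\log|z-w|^{-1}\,d\theta$ over nearby $w$), but it is not needed for your argument: since $g_1$ is continuous on a compact set, monotone convergence gives $\int g_n\downarrow\int g\in[-\infty,\infty)$, and passing to the limit in $L\le\frac{1}{2\pi}\int g_n$ already yields $L\le\frac{1}{2\pi}\int_0^{2\pi}\psi^\circ(re^{i\theta})\,d\theta$, which is vacuous in the case $\int g=-\infty$ (forcing $L=-\infty$). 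Second, the uniqueness paragraph as written concludes ``$u\equiv\psi$'' before $\psi$ is known to be subharmonic, and it silently identifies $\lim_{\rho\to0}$ of the solid mean of $\psi^\circ$ over $B_\rho(0)$ with $\limsup_{z\to0}\psi^\circ(z)$, an identification whose ``$\ge$'' half is precisely the sub-mean-value inequality you only establish afterwards. The clean order is to prove existence first, after which the a.e.-agreement argument finishes uniqueness; alternatively, for any subharmonic extension $u$ one gets $u(0)=\limsup_{z\to0}\psi^\circ(z)$ directly, since upper semicontinuity of $u$ at $0$ gives $u(0)\ge\limsup_{z\to0}\psi^\circ(z)$, while $u(0)\le$ (solid mean of $u$ over $B_\rho(0)$) $=$ (solid mean of $\psi^\circ$) and upper semicontinuity of $\psi^\circ$ bounds $\limsup_{\rho\to0}$ of that mean by $\limsup_{z\to0}\psi^\circ(z)$.
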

As in \cite[Section 3.3]{Gra62}, it's natural to make the following definition.
\begin{defn}\label{defcur}
A closed positive (1,1)-current $\omega$ on $X$ is by definition a closed positive (1,1)-current $\omega$ on $X^{\reg}$ such that for any $x\in X$, there exists an open neighborhood $U$ of $x\in X$ and a plurisubharmonic function $\psi$ on (the complex space) $U$ such that $\omega|_{U\cap X^{\reg}}=\sqrt{-1}\partial\bar{\partial}\left(\psi|_{U\cap X^{\reg}}\right)$.
\end{defn}

To globalize the above definitions and results, we consider a Hermitian line bundle $(L, h)$ over $X$.  We fix an open covering of $\{U_\alpha\}$ of $X$ and choose generator $l_\alpha$ of $\mathcal{O}_X(U_\alpha)$. Then the Hermitian metric $h$ is represented by a family of real valued functions $\{\psi_\alpha\}$ with $\|l_\alpha\|_h^2=e^{-\psi_\alpha}$. We say that $h$ is continuous if $\psi_\alpha$ is continuous for every $\alpha$. We say $h$ is smooth if $\psi_\alpha=\Psi_\alpha|_{U_\alpha}$ for some local embedding $U_\alpha\rightarrow \mathcal{U}_\alpha\subset\mathbb{C}^N$ and a smooth function $\Psi_\alpha$ on $\mathcal{U}_\alpha$. We say that $(L, h)$ has a positive curvature current if $\Psi_\alpha$ is plurisubharmonic on $U_\alpha$ for every $\alpha$. In this case, we define the Chern curvature current of $(L, h)$ by
\[
c_1(L, h):=\left.\pdJd \psi_\alpha\right|_{X^{\reg}}.
\]
It is easy to verify this is a well-defined positive current in the sense of Definition \ref{defcur}.

\section{Consequence of partial $C^0$-estimate}\label{bypartialC0}
Let $\pi: \cX\rightarrow S$ be a family of smoothable K-polystable Fano variety over a complex space $S$. The consideration here is local in $S$ and so we will assume $S$ is an affine variety in this section. For any $t\in S$, denote by $\cX_t=\pi^{-1}\{t\}$ the fibre above $t$ and by $K_{\cX_t}$ its canonical {$\mathbb{Q}$}-line bundle. Let $(\omega_t, h_t):=(\omega_{\rm KE}(t), h_{\rm KE}(t))$ be the K\"{a}hler-Einstein metric on $(\cX_t, K_{\cX_t}^{-1})$. For any integer $m>0$, we choose $\{\widetilde{s}_i\}_{i=1}^{N_m}$ to be a fixed basis of \Blue{the locally free $\sO_S$-module
$\pi_* \sO_{\cX} (-mK_{\cX})$}, and denote $\widetilde{s}_i(t)=\widetilde{s}_i|_{\cX_t}$. By \cite{DS2012} and \cite{Tian13} (see also \cite[Lemma 8.3]{LWX}), there exists $m_0=m_0(n)>0$, such that for any $m\ge m_0$, we can embed $\widetilde{\iota}_t: \cX_t\hookrightarrow\mathbb{P}^{N_m-1}$ using $\{\widetilde{s}_i(t)\}$ such that $\iota_t^*H_i={\widetilde{s_i}}$ where $H_i$'s { are } coordinate hyperplane sections of $\mathbb{P}^{N_m-1}$. We will fix this identification of $\mathbb{P}\left(H^0(\cX_t, K_{\cX_t}^{-m})^*\right)$ and $\mathbb{P}^{N_m-1}$ from now on. We denote the pull back the Fubini-Study metric by:
\begin{equation}\label{refFS}
\widetilde{\omega}_t:=\frac{1}{m}\widetilde{\iota}_t^*\omega_{\rm FS}=\frac{1}{m}\pdJd\log\sum_{i=1}^{N_m}|\widetilde{s}_i(t)|^2.
\end{equation}
Here the right hand side means that if we choose $e\in \mathcal{O}(K_{\cX/S}^{-m})$ to be any local generator and denote $e_t=e|_{\cX_t}$, then the following is well defined:
\[
\frac{1}{m}\pdJd\log\sum_{i=1}^{N_m}|\widetilde{s}_i(t)|^2=-\frac{1}{m}\pdJd \log \frac{|e_t|^2}{\sum |\widetilde{s}_i(t)|^2}.
\]
\Blue{
We now recall the definition of Bergman kernels and Bergman metrics of $(\cX_t, \omega_{t})$. The metrics $(\omega_t, h_t)$ induce an $L^2$-inner product on $H^0(\cX_t, K_{\cX_t}^{-m})$ as follows:
\[
\la s, s'\ra_{L^2}=\int_{\cX_t} \langle s, s'\rangle_{h^{\otimes m}_t} \omega_t^n.
\]
We choose an orthonormal basis $\{s_i(t)\}$ of $(H^0(\cX_t, K_{\cX_t}^{-m}), \la\cdot,\cdot\ra_{L^2})$ and define the $m$-th {\em Bergman kernel} as follows:
\begin{equation}\label{Bergker}
\rho_m(t):=\rho_{\rm KE}(m,t)=\sum_{i=1}^{N_m}\left|s_i(t)\right|_{h^{\rm \otimes m}_t}^2.
\end{equation}
It is independent of the choice of the orthonormal basis. For the basis  $\{\widetilde{s}_i\}_{i=1}^{N_m}$ we fixed at the beginning,  let  $A_{ij}(t):=\la \widetilde{s}_i(t), \widetilde{s}_j(t)\ra_{L^2}$. Then $\{s_i(t)\}:=A^{-1/2}\{\widetilde{s}_i(t)\}$ is an orthonormal basis.  Now we write
\[
\iota_{t}:=A(t)^{-1/2} \circ \widetilde{\iota}_{t}
\]
with  $\ti\iota_t: \cX_t\hookrightarrow\mathbb{P}^{N_m-1}$ being the embedding given by $\{\widetilde{s}_i(t)\}$,
}
and define the {\em Bergman metric} $\check{\omega}_t$ as following:
\begin{equation}\label{Bergman}
\check{\omega}_t:=\frac{1}{m}\iota_t^*\omega_{\rm FS}=\frac{1}{m}\pdJd\log\sum_{i=1}^{N_m}|s_i(t)|^2=-\frac{1}{m}\pdJd \log \frac{|e_t|^2}{\sum|s_i(t)|^2}.
\end{equation}
{Then by \eqref{Bergker}} and  \eqref{Bergman},  we see that $\omega_t$ and $\check{\omega}_t$ are related to each other via:
\[
\check{\omega}_t=\frac{1}{m}\pdJd\log\rho_m(t)-\pdJd\log|e|_{h_t}^2=\omega_t+\pdJd\left(\frac{1}{m}\log\rho_m(t)\right).
\]
In particular, the K\"{a}hler-Einstein metrics $\omega_t$ satisfies the following complex Monge-Amp\`{e}re equation on {$\cX_t$}:
\begin{equation}\label{KEeq}
\omega_t^n={\rm V} \frac{\rho_m^{1/m}\Omega_{\cX_t}}{\int_{\cX_t}\rho_m^{1/m}\Omega_{\cX_t}}:=e^{-\mathfrak{u}}\Omega_{\cX_t},
\end{equation}
{
where the volume form $\Omega_{\cX_t}$ and the potential $\ku$ in \eqref{KEeq} are given by:
\[
\Omega_{\cX_t}=\left(\sum_{i=1}^{N_m} |s_i(t)|^2\right)^{-1/m}, \quad \mathfrak{u}=-\log\left(\frac{\rho_m^{1/m}}{\int_{\cX_t}\rho_m^{1/m}\Omega_{\cX_t}}\right)-\log {\rm V}.
\]
The right-hand-side of \eqref{KEeq} has that form because we have that $\int_{\cX_t}\omega_t^n={\rm V}:={(c_1(x))^n}$ is a fixed constant. }
{For the purpose of later estimates, we rewrite the right hand side of \eqref{KEeq} into a form using the data from the original (holomorphic) data $\{\widetilde{s}_i\}$:}
$e^{-\ku}\Omega_{\cX_t}=e^{-\tku}\widetilde{\Omega}_{\cX_t}$, such that
\begin{equation}\label{KEeq2}
\tilde{\omega}_t^n= e^{-\tku}\widetilde{\Omega}_{\cX_t},
\end{equation}
where we have denoted:
\[
\widetilde{\Omega}_{\cX_t}=\left(\sum_{i=1}^{N_m} |\widetilde{s}_i(t)|^2\right)^{-1/m}, \quad \tku=\ku-\log\frac{\Omega_{\cX_t}}{\widetilde{\Omega}_{\cX_t}}=-\log\left(\frac{\rho_m^{1/m}\frac{\Omega_{\cX_t}}{\tO_{\cX_t}}}{\int_{\cX_t}\rho_m^{1/m}\frac{\Omega_{\cX_t}}{\tO_{\cX_t}}\tO_{\cX_t}}\right)-\log {\rm V}.
\]
Now denote by $\widetilde{\Omega}=\{\widetilde{\Omega}_{\cX_t}\}$ (resp. $\Omega=\{\Omega_{\cX_t}\}$) the family
of volume forms on $\cX_t$. Then $\tO$ (resp. $\Omega$) defines a Hermitian metric on the relative anti-canonical line bundle $K_{\cX/S}$.  We denote its Chern curvature on the total space $\cX$ by:
\begin{equation}\label{2ref}
-\pdJd \log \widetilde{\Omega}=\widetilde{\omega}\quad (\text{ resp. } -\pdJd\log\Omega=\check{\omega}),
\end{equation}
Then clearly we have: $\widetilde{\omega}|_{\cX_t}=\widetilde{\omega}_t$ (resp. $\check{\omega}|_{\cX_t}=\check{\omega}_t$).

{By \cite{EGZ11}, we know that for a fixed $t\in S$, the function $\ku$ and $\tku$ are continuous on $\cX_t$.} The main result in this section is the following proposition. 
\begin{prop}\label{bdU}
$\ku$ and $\tku$ are continuous and uniformly bounded with respect to $t$.
\end{prop}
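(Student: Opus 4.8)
The plan is to establish uniform bounds on $\tku$ (equivalently $\ku$) by combining the partial $C^0$-estimate with the uniform geometry of K-polystable degenerations, and then upgrade these to uniform continuity via a Bergman-kernel expansion argument together with the stability of solutions to the complex Monge–Amp\`ere equation. The key observation is that $\tku = -\log(\rho_m^{1/m}\cdot\frac{\Omega_{\cX_t}}{\tO_{\cX_t}}) + \log\big(\int_{\cX_t}\rho_m^{1/m}\frac{\Omega_{\cX_t}}{\tO_{\cX_t}}\tO_{\cX_t}\big) - \log{\rm V}$, so that controlling $\tku$ uniformly amounts to controlling the Bergman kernel $\rho_m(t)$ uniformly away from $0$ and $\infty$, uniformly in $t$, together with controlling the ratio $\Omega_{\cX_t}/\tO_{\cX_t} = (\sum|\widetilde s_i(t)|^2)/(\sum|s_i(t)|^2) = \det A(t)^{1/m}$ — but this ratio is a positive \emph{constant} on each fiber, and the eigenvalues of $A(t)$ vary continuously (indeed real-analytically off a bad locus) in $t$ as long as the embedding $\widetilde\iota_t$ stays non-degenerate, which it does by the partial $C^0$-estimate of \cite{DS2012, Tian13}.

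**First I would** invoke the partial $C^0$-estimate in the form stated before the proposition: for $m\ge m_0(n)$ the embeddings $\widetilde\iota_t$ exist uniformly, which gives a uniform lower bound $\rho_m(t)\ge c_m>0$ pointwise on $\cX_t$, uniformly in $t$ (this is precisely the content of the Tian–Donaldson–Sun estimate — the Bergman kernel is bounded below by a constant independent of the point and of the polystable fiber). For the \emph{upper} bound on $\rho_m(t)$, I would use that $\int_{\cX_t}\rho_m(t)\,\omega_t^n = N_m$ is fixed, together with a uniform $C^0$ (Moser iteration / mean-value) estimate for $\rho_m$ coming from the uniform bound on the Ricci curvature of $\omega_t$ (the fibers are KE with fixed Einstein constant) — alternatively one can cite the uniform two-sided Bergman kernel bounds for K-polystable KE Fano varieties in a fixed family, which follow from Donaldson–Sun-type compactness. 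Combining the two-sided bound on $\rho_m$ with the fact that $\int_{\cX_t}\rho_m^{1/m}\tO_{\cX_t}$-type integrals are comparable to the fixed volume (using again the two-sided Bergman bound and the fact that $\rho_m^{1/m}\tO_{\cX_t}$ is, up to the constant $\det A(t)^{-1/m}$ and a bounded factor, the KE volume form $\omega_t^n$ of total mass ${\rm V}$), one gets the uniform boundedness of $\tku$. The key input making the $t$-dependence uniform is that all these constants depend only on $n$, the Hilbert polynomial, and the fixed family $\cX\to S$, via Donaldson–Sun compactness of the space of polystable degenerations.

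**For continuity in $t$**, I would argue as follows: the functions $t\mapsto A_{ij}(t) = \langle\widetilde s_i(t),\widetilde s_j(t)\rangle_{L^2}$ are continuous because $\omega_t$ and $h_t$ vary continuously with $t$ (continuity of KE metrics in families of K-polystable Fano varieties, which is where the extension of the continuity results of \cite{Li13} referred to in the introduction enters), hence so is $\det A(t)^{1/m}$, and the pointwise-on-$\cX$ function $\rho_m$ depends continuously on $(x,t)$ by the same continuity of the KE data. Then $\tku$, being an explicit continuous expression in $\rho_m$, $\det A(t)$, and the continuously-varying integral $\int_{\cX_t}\rho_m^{1/m}\tfrac{\Omega}{\tO}\tO$, is jointly continuous; restricting to fibers gives continuity on each $\cX_t$ and the claimed uniform behavior. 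To make ``continuous on the total space $\cX$'' precise at the singular points of fibers one works with the potentials $\tku$ as functions on $\cX$ and uses that the normalized Monge–Amp\`ere solutions are continuous up to and across the central fiber — this is the $L^\infty$-stability of complex Monge–Amp\`ere equations with bounded right-hand side, in the singular/degenerate setting of \cite{EGZ11}, applied uniformly.

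**The main obstacle** I expect is the uniform-in-$t$ control near the central (possibly singular, possibly most-degenerate) fibers: one needs the two-sided Bergman kernel bound and the continuity of KE metrics to hold \emph{uniformly} as $t$ approaches the bad locus where fibers acquire worse singularities. This is exactly the place where the deep compactness theory of Donaldson–Sun \cite{DS2012} and Tian \cite{Tian13} — uniform partial $C^0$-estimate over the whole family, not just on a single fiber — is indispensable; without it the constants $c_m$, $C_m$ could a priori degenerate. A secondary technical point is verifying that the continuity of $\ku$ on each fixed fiber from \cite{EGZ11} can be promoted to joint continuity on $\cX$, which requires a stability estimate for the Monge–Amp\`ere equation that is uniform as the complex structure degenerates; I would handle this by the comparison/stability argument of Ko\l odziej–Eyssidieux–Guedj–Zeriahi type, noting the right-hand sides $e^{-\tku}\tO_{\cX_t}$ have uniformly bounded densities in a fixed $L^p$ with $p>1$ with respect to a fixed reference volume on $\cX$.
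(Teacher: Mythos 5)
Your overall reading of where the inputs come from (the two-sided partial $C^0$-estimate from \cite{DS2012}, \cite{Tian13} for boundedness of $\log\rho_m$, the Moser-iteration upper bound, continuity of $A(t)$ from Gromov--Hausdorff convergence) is aligned with the paper. But the proposal has one incorrect simplification and one genuine gap that hides the technical heart of the proof.

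First, a concrete error: you assert that the ratio $\Omega_{\cX_t}/\tO_{\cX_t}$ is a \emph{constant} on each fiber, equal to $\det A(t)^{1/m}$. This is false. Writing $s_i = \sum_j (A^{-1/2})_{ij}\widetilde{s}_j$, one computes
\[
\sum_i |s_i(x)|^2 = \sum_{j,k}(A^{-1})_{kj}\,\widetilde{s}_j(x)\overline{\widetilde{s}_k(x)},
\]
which is a quadratic form in the vector $\widetilde{s}(x)$ with matrix $A^{-1}$ and is \emph{not} a constant multiple of $\sum_j |\widetilde{s}_j(x)|^2$ unless $A$ is a scalar matrix. So $f(t) = \log(\Omega_{\cX_t}/\tO_{\cX_t})$ is genuinely a non-constant function on the fiber, and the paper treats it as such. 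The continuity of $f$ is controlled by the continuity of $t\mapsto A(t)^{-1/2}\in GL(N_m,\mathbb{C})$, which in turn requires not only the Gromov--Hausdorff convergence of \cite{DS2012}, \cite{Tian13} (which only gives subsequential convergence) but also the \emph{uniqueness} of the Gromov--Hausdorff limit (Lemma~\ref{unique}, quoted from \cite{LWX}) to upgrade it to genuine sequence-independent continuity. You do not mention this uniqueness step, without which the continuity of $A(t)$ at $t=0$ does not follow.

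Second, and more importantly: even granted the continuity of $F(t) = \frac{1}{m}\log\rho_m + \log(\Omega_{\cX_t}/\tO_{\cX_t})$ as a function on $\cX\setminus\cX^{\sing}$, the continuity of $\tku(t)$ requires convergence of the fiber integrals $\int_{\cX_t}e^{F(t)}\tO_{\cX_t}$ as $t\to 0$. This does \emph{not} follow from pointwise continuity of the integrand, because the reference volume forms $\tO_{\cX_t}$ (which are only locally $L^1$, with poles governed by the discrepancies of $\cX_0$) can concentrate near $\cX^{\sing}$ as $t\to 0$. The paper controls this via the decomposition \eqref{decompestimate} into a good region away from $\cX^{\sing}$ and a bad neighborhood $\mathcal{W}(\delta)$, and the crucial convergence near the singular set is supplied by Lemma~\ref{kltconv} from the Appendix. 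That lemma is nontrivial: it uses the toroidal reduction of \cite{AK00} and inversion of adjunction (the discrepancy bounds $b_i<1$, $c_j<1$) to get the required uniform integrability. Your proposal does not address this at all. Instead, you suggest a Ko\l odziej/EGZ-type $L^\infty$-stability argument for the complex Monge--Amp\`ere equation; but note that $\tku$ here is not defined as the unknown solution to a Monge--Amp\`ere equation with a prescribed right-hand side --- it is an explicit expression in the Bergman kernel $\rho_m$ and the Gram matrix $A(t)$ --- and in any case the hypothesis you propose (that $e^{-\tku}\tO_{\cX_t}$ has uniformly bounded $L^p$ density) is precisely the uniform boundedness of $\tku$ you are trying to prove, so the argument is circular. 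The integral-convergence lemma is the missing ingredient and the reason the appendix exists; without replacing it, the continuity of $\tku$ at the boundary of the locus of smooth fibers is not established.
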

\begin{proof}
By using Moser iteration, we know that $\log\rho_m$ is uniformly bounded form above by \cite[(5.2)]{Tian1990}.
Tian's partial $C^0$-estimate states that $\log\rho_m$ is uniformly bounded from below. The partial $C^0$-estimate is {now known to be} true by the fundamental works of \cite{DS2012} and \cite{Tian13}. Moreover, by their proofs, we know that
if $(\cX_{t_i}, \omega_{t_i})$ Gromov-Hausdorff converges to $(\cX_0, \omega_0)$, then in the ambient space ${\mathbb{P}^{N_m-1}}$, we have $\iota_{t_i}(\cX_{t_i})\rightarrow \iota_0(\cX_0)$
and $\rho_m\circ \iota_{t_i}\rightarrow \rho_m\circ\iota_{0}$ uniformly. Now in \cite{LWX} (see Lemma 2.2), we proved that $(\cX_0, \omega_0)$ is indeed {the unique} Gromov-Hausdorff limit as $t\rightarrow 0$ independent of the chosen sequence $\{t_i\}$.

Next we argue that $f(t)=\log\frac{\Omega_{\cX_t}}{\tO_{\cX_t}}$ is continuous. Note that
\[
f(t)=\frac{1}{m}\log\frac{\sum_{i} |\widetilde{s}_i\circ\widetilde{\iota}_t|^2}{\sum_{i}|s_i\circ\iota_t|^2}=\frac{1}{m}\log\frac{\sum_{i}|\widetilde{s}_i(t)|^2}{\sum_{i,j}|A(t)^{-1/2}_{ij}\widetilde{s}_j(t)|^2}.
\]
So we just need to show that $t\mapsto A(t)^{-1/2}$ is a continuous map from $S$ to $GL^{+}(N_m, \mathbb{C})$.
By \cite{DS2012} and \cite{Tian13} we \Blue{can assume} that $A(t)^{-1/2}\{\widetilde{s}_i(t)\}=\{s_i(t)\}\rightarrow \{s_i(0)\}=A(0)^{-1/2} \{\widetilde{s}_i(0)\}$ if $(\cX_{t_i}, \omega_{t_i})\rightarrow (\cX_0, \omega_0)$ in the Gromov-Hausdorff topology. So $A(t_i)^{-1/2}\rightarrow A(0)^{-1/2}$ for such a sequence $\{t_i\}$. Again by Lemma \ref{unique} (see also \cite{LWX}), this holds for any sequence $t_i\rightarrow 0$.

So we have proved the continuity of the function:
\[
F(t)=\frac{1}{m}\log\rho_m+\log\frac{\Omega_{\cX_t}}{\tO_{\cX_t}},
\]
which enters into the expression of $\tku(t)$:
\[
\tku(t)=-\log\left(\frac{e^{F(t)}}{\int_{\cX_t}e^{F(t)}\tO_{\cX_t}}\right)-\log {\rm V}.
\]
The continuity of $\tku(t)$ will then follow by similar arguments as in the proof of \cite[Lemma 1]{Li13} except that we need to use a more general convergence Lemma \ref{kltconv} in the Appendix, which may have independent interest. Indeed, as in \cite[(28)]{Li13} we can estimate:
\begin{eqnarray}\label{decompestimate}
&&\scriptstyle{\left|\int_{\mX_t}e^{F(t)} \tO_{\cX_t}-\int_{\mX_0}e^{F(0)}\tO_{\cX_0}\right|}\\
&\le& \scriptstyle{\left|\int_{\mX_t\backslash\mathcal{W}(\delta)}e^{F(t)}\tOt-\int_{\mX_0\backslash\mathcal{W}(\delta)}e^{F(0)}\tOo\right|+\left|\int_{\mX_t\cap\mathcal{W}(\delta)} e^{F(t)}\tOt-\int_{\mX_0\cap \mathcal{W}(\delta)}e^{F(0)}\tOo\right|}\nonumber\\
&\le& \scriptstyle{\left|\int_{\mX_t\backslash\mathcal{W}(\delta)}e^{F(t)}\tOt-\int_{\mX_0\backslash\mathcal{W}(\delta)}e^{F(0)}\tOo\right|+e^{\|F(t)\|_{L^{\infty}}}\left(\int_{\mX_t\cap\mathcal{W}(\delta)}\tOt+\int_{\mX_0\cap \mathcal{W}(\delta)}\tOo\right)}\nonumber.
\end{eqnarray}
Here $\mathcal{W}(\delta)$ denotes a small neighborhood of $\cX^{\sing}$ in the analytic topology such that $\lim_{\delta \rightarrow 0}\mathcal{W}(\delta)=\cX^{\sing}$ in Hausdorff topology of subsets of ${\mathbb{P}^{N_m-1}\times S}$. By the continuity of $F(t)$ and $\tku_{\cX_t}$ away from $\cX^{\sing}$, the first term on the right hand side of \eqref{decompestimate} can be arbitrarily small if $t$ is sufficiently close to $0$ for a fixed $\delta>0$. On the other hand, by { Lemma \ref{kltconv} }, we have
\begin{equation}\label{lvolconv}
\lim_{t \rightarrow 0} \int_{\cX_t\cap \mathcal{W}(\delta)}\tOt=\int_{\cX_0\cap \mathcal{W}(\delta)}\tOo.
\end{equation}
Now by choosing $\delta$ sufficiently small, we can make the right hand side of \eqref{lvolconv} sufficiently small, and hence the left hand side of \eqref{lvolconv} can also be made sufficiently small
as long as $t$ is sufficiently close to $0$. Combining the above estimates, we indeed see that $\tku(t)$ is continuous as $t\rightarrow 0$.

\end{proof}


\begin{rem}\label{compSch}
{Constrast  to the canonically polarized case studied in \cite{Sch12}, \Red{in which}  the Aubin-Yau's $C^0$-estimate fails to be uniform near  the boundary and hence there is no uniform  lower bound for the K\"ahler  potential of Weil-Petersson metric (see \cite{Sch12, BG13}). Here we have two sided bounds for  the potential of the Weil-Petersson metric for the Fano case.}
\end{rem}

For the reader's convenience, we record the following uniqueness result from \cite{LWX} and {sketch} its proof. \Red{When the automorphism groups are discrete, this is also proved in  \cite{SSY} using a different method}.
\begin{lem}[\cite{LWX}]\label{unique}
In Gromov-Hausdorff topology
$\cX_t\rightarrow \cX_0$, equivalently $\cX_{t_i}\rightarrow {\cX_0}$ for any sequence $t_i\rightarrow 0$.
\end{lem}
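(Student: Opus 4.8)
The plan is to deduce the statement from two inputs: the compactness-and-algebraicity theory of Donaldson--Sun and Tian for Gromov--Hausdorff limits of K\"ahler--Einstein Fano manifolds \cite{DS2012, Tian13}, and the separatedness of the moduli of K-polystable $\QQ$-Fano varieties from \cite{LWX}. Since the Gromov--Hausdorff distance is a genuine metric on isometry classes of compact metric spaces, the two formulations in the statement are equivalent, and it suffices to prove that an \emph{arbitrary} sequence $t_i\to 0$ admits a subsequence along which $(\cX_{t_i},\omega_{t_i})$ converges to $(\cX_0,\omega_0)$: the full sequence then converges to $(\cX_0,\omega_0)$, and as the sequence was arbitrary so does $\cX_t$ as $t\to 0$.

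\emph{Step 1: extracting a limit.} By the Donaldson--Sun compactness theorem (building on Cheeger--Colding--Tian), after passing to a subsequence $(\cX_{t_i},\omega_{t_i})$ converges in the Gromov--Hausdorff topology to a metric space $(X_\infty,\omega_\infty)$, where $X_\infty$ is a normal $\QQ$-Fano variety carrying a weak K\"ahler--Einstein metric; in particular $X_\infty$ is K-polystable. Moreover, for $m$ large and divisible, if we embed each $\cX_{t_i}$ into $\PP^{N_m-1}$ by an $L^2(\omega_{t_i})$-orthonormal basis of $H^0(-mK_{\cX_{t_i}})$, the images converge, as closed subschemes, to the analogous embedding of $X_\infty$. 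Comparing with the fixed basis $\{\widetilde s_i\}$ of the family (which differs from the orthonormal one by the invertible matrix $A(t)^{-1/2}$), this exhibits $X_\infty$ as the flat central fibre of a one-parameter algebraic degeneration whose generic fibre over $s$ is the fibre $\cX_{\sigma(s)}$ of the given family along a holomorphic arc $\sigma\colon \Delta\to S$ with $\sigma(0)=0$, after applying an $s$-dependent element of $GL(N_m,\CC)$.

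\emph{Step 2: identifying the limit.} Pulling back the given family along $\sigma$ gives a flat family of $\QQ$-Fano varieties over $\Delta$ with central fibre $\cX_0$; together with the degeneration constructed in Step~1 we thus have two flat families over $\Delta$ that become isomorphic over $\Delta^{*}$ and whose central fibres $\cX_0$ and $X_\infty$ are both K-polystable. By the separatedness of the moduli of K-polystable $\QQ$-Fano varieties established in \cite{LWX}, this forces $X_\infty\cong \cX_0$; and since the K\"ahler--Einstein metric on a K-polystable Fano variety is unique up to automorphisms, the weak K\"ahler--Einstein metrics agree as well, i.e.\ $(X_\infty,\omega_\infty)\cong(\cX_0,\omega_0)$ as metric spaces. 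The subsequence argument of the first paragraph then finishes the proof.

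The main obstacle is Step~1 together with the separatedness invoked in Step~2: passing from purely metric Gromov--Hausdorff convergence to an honest algebraic degeneration compatible with the given family is precisely the deep content of \cite{DS2012, Tian13}, while the uniqueness of the K-polystable limit is a nontrivial fact (it fails for general reductive GIT, and here rests on the positivity and valuative arguments underlying \cite{LWX}). The remaining ingredients --- the subsequence bookkeeping and the uniqueness of K\"ahler--Einstein metrics --- are standard.
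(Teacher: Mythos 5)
Your proposal takes a genuinely different route from the paper. The paper argues by contradiction: assuming two sequences $t_i,t_i'\to 0$ produce distinct Gromov--Hausdorff limits, it invokes the \emph{intermediate value type result} \cite[Lemma~6.9.(2)]{LWX} to produce a third sequence whose limit $Y$ can be located, via the local Luna-slice description from \cite[Lemma~3.1]{LWX}, either in $GL(N_m)\cdot(\cU\cap\overline{O})$ or in $\overline{O_{\chow(\cX_0,\omega_0)}}\setminus O_{\chow(\cX_0,\omega_0)}$. In each case one manufactures a special test configuration (either degenerating $Y$ to $\cX_0$ or degenerating $\cX_0$ to $Y$, the latter via Donaldson's result \cite{Don10}), contradicting K-polystability. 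Your proposal instead extracts a subsequential limit $X_\infty$, packages it as the central fibre of an algebraic degeneration of the given family along an arc, and then invokes separatedness of the K-polystable moduli to identify $X_\infty\cong\cX_0$. These are different decompositions of the same underlying difficulty.

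There are two substantive gaps. First, in Step~1 you assert that the GH limit is exhibited ``as the flat central fibre of a one-parameter algebraic degeneration whose generic fibre over $s$ is the fibre $\cX_{\sigma(s)}$ of the given family along a holomorphic arc $\sigma\colon\Delta\to S$ \dots after applying an $s$-dependent element of $GL(N_m,\CC)$.'' But \cite{DS2012,Tian13} deliver only \emph{sequential} convergence of Hilbert points $g_i\cdot\widetilde\iota_{t_i}(\cX_{t_i})\to\hilb(X_\infty)$ for a sequence of matrices $g_i=A(t_i)^{-1/2}$; nothing says $g_i$ interpolates holomorphically in $t$, nor that the arbitrary sequence $\{t_i\}\subset S$ (with $\dim S$ possibly $>1$) lies on an analytic arc through $0$. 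Turning this metric limit into an algebraic one-parameter degeneration compatible with the family $\cX\to S$ is precisely the hard part; the paper (and \cite{LWX}) do not do this directly but instead work with orbit closures in the Hilbert/Chow scheme and use the Luna slice plus intermediate value machinery to gain the required control.

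Second, the appeal to ``separatedness of the moduli of K-polystable $\QQ$-Fano varieties established in \cite{LWX}'' is circular relative to the source you cite. In \cite{LWX}, uniqueness of the Gromov--Hausdorff limit (the content of the present lemma) is an \emph{input} to the construction and properness of $\overline{\cM}$, not a corollary of it. A purely algebraic proof of uniqueness of K-polystable degenerations does exist, but it postdates both \cite{LWX} and this paper; as the argument is structured here, you would be assuming what the lemma is meant to supply. To repair the proposal, you would either need to substitute an independent proof of separatedness, or replace the separatedness black box with the orbit-closure/intermediate-value analysis the paper actually carries out.
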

\begin{proof}
Assume $\cX_{t_i}\rightarrow \cX_0$ and $\cX_{t'_i}\rightarrow \cX_0'$ such that $\cX_0\neq \cX_0'$. Without loss of generality, we can assume $|t_i|<|t_i'|$ where $|\cdot|$ is any continuous distance function to $0\in S$. Then by the {\em Intermediate Value {Type} result} in \cite[Lemma 6.9.(2)]{LWX}, there exists $t_i''$ such that $|t_i|<|t_i''|<|t_i'|$ such that $(\cX_{t_i''}, \omega_{t_i''})$ converges in Gromov-Hausdorff topology to a K\"{a}hler-Einstein Fano variety $(Y, \omega_Y)$ as $t_i''\rightarrow 0$, which satisfies:
\[
\chow(\cX_{t_i''}, \omega_{t_i''})\rightarrow \chow(Y, \omega_Y) \in \left[\overline{O_{\chow(\cX_0, \omega_0)}}\bigcup\left(GL(N_m)\cdot (\mathcal{U}\cap \overline{O})\right)\right]\setminus O_{\chow(\cX_0, \omega_0)},
\]
where $\chow(\cX_t, \omega_t)$ denotes the Chow point of $\iota_{t}(\cX_t)$ as a subvariety of ${\mathbb{P}^{N_m-1}}$, and
\[
O_{\chow(\cX_0, \omega_0)}=GL(N_m,\mathbb{C})\cdot \chow(\cX_0, \omega_0), \quad \overline{O}=\lim_{t\rightarrow 0} \overline{O_{\chow(\cX_t, \omega_t)}},
\]
and $\mathcal{U}$ is an open set of $\chow(\cX_{0}, \omega_0)$ constructed in \cite[Lemma 3.1]{LWX} using local Luna slice theorem.
If $Y \in GL(N_m,\mathbb{C})\cdot \left(\mathcal{U}\cap \overline{O}\right)$ then by \cite[Lemma 3.1]{LWX} there exists a special test configuration of $Y$ to $\cX_0$. This contradicts to $Y$ being K-polystable (see \cite{Be12}). So we must have $Y\in \overline{O_{\chow(\cX_0, \omega_0)}}\setminus O_{\chow(\cX_0, \omega_0)}$. However, this implies again there is a special test configuration of $\cX_0$ to $Y$ by \cite{Don10}, which contradicts that $\cX_0$ is K-polystable.

\end{proof}


\section{Canonical metric on the CM line bundles}\label{s-can}

In this section, we assume that $\pi: \cX\rightarrow S$ is a family of  K\"{a}hler-Einstein Fano varieties over a {\em smooth complex manifold} (see Remark \ref{normal}) $S$ such that the generic fibre is smooth.  For each $t\in S$, denote by $\omega_t$ the K\"{a}hler-Einstein metrics on $\cX_t=\pi^{-1}(t)$.  Let $A\subset S$ be the analytic set parametrizing
singular $\mathbb{Q}$-Fano varieties (which are smoothable and K-polystable). Denote $S^{\circ}=S\setminus A$ and $\cX^{\circ}=\pi^{-1}(S^{\circ})$. By \cite[Section 5.3]{Don2008} and \cite{Sz2010}, we can assume $\omega_t$ varies smoothly on $\cX^{\circ}$.

\subsection{Preliminaries}
\subsubsection{Weil-Petersson metric and CM line bundle on the smooth locus}\label{WP-CM}
We know that on the open sub-space $S^{\circ}=S\setminus A$ there is a well defined Weil-Petersson metric $\omega_{\rm WP}^{\circ}$. Let's briefly recall its definition and refer to \cite{FS90} for detailed discussions. Fix any $t\in S^{\circ}$. We denote by $\mathcal{T} \cX_t$ the holomorphic tangent sheaf/bundle of $\cX_t$ and by $\mu_t: T_{t}S\rightarrow H^1(\cX_t, \mathcal{T} \cX_t)$ the
Kodaira-Spencer map associated to the family $\cX^{\circ}\rightarrow S^{\circ}$. We denote by $A^{0,k}(X, \mathcal{T} \cX_t) (k\ge 0)$ the space of smooth $\mathcal{T} \cX_t$-valued $(0,k)$-forms on $\cX_t$. Then the K\"{a}hler-Einstein metric $\omega_t$ induces $L^2$ inner products:
\[
(\theta_1, \theta_2)_{L_t^2}=\int_{\cX_t}\la \theta_1, \theta_2\ra_{\omega_t}\omega_t^{n}, \text{ for any } \theta_1, \theta_2\in A^{0,k}(\cX_t, \mathcal{T} \cX_t)
\]
where $\langle\cdot ,\cdot \rangle_{\omega_t}$ is the induced inner product on $\mathcal{T} \cX_t\otimes T^{*(0,k)}\cX_t$.
Then we can define the $L_t^2$-adjoint $\bar{\partial}^*$ of the operator $\bar{\partial}: A^{0,1}(X, \mathcal{T} \cX_t)\rightarrow A^{0,2}(X, \mathcal{T} \cX_t)$ and then the Laplacian operator ${\Box}_{t}=\bar{\partial}^*\bar{\partial}+\bar{\partial}\bar{\partial}^*$ on $A^{0,1}(\cX_t, \mathcal{T} \cX_t)$.  For each $[\theta]\in H^1(\cX_t, \mathcal{T} \cX_t)$ we denote by $\theta_{\rm H}$  the unique harmonic
representative of $[\theta]$. For any $v, v'\in T_t S^{\circ}$, we then define the Weil-Petersson metric by the following formula:
\[
\omega_{\rm WP}^{\circ}(v, v')=(\mu_t(v)_{\rm H}, \mu_t(v')_{\rm H})_{L_t^2}.
\]
We say that a tangent
direction $v\in T_tS$ is {\em  effective} if $\mu_t(v)\neq 0$. Then by its definition, $\omega_{\rm WP}$ is a positive smooth form (i.e. a possibly degenerate metric) over $S^{\circ}$ and is positive definite along directions of effective infinitesimal deformations.
We have the following formula (see also Section \ref{secproj}):
\begin{thm}[{\cite[Theorem 7.9]{FS90}}]
 $\omega_{\rm WP}^{\circ}$ has the following  representation using the fibre integral:
\begin{equation}\label{smWP}
\omega_{\rm WP}^{\circ}=-\int_{\mX^{\circ}/S^{\circ}}\omega_{\mX^{\circ}}^{n+1}.
\end{equation}
Here we have denoted
\begin{equation}
\omega_{\mX^{\circ}}=-\pdJd\log \{\omega_t^{n}\}
\end{equation}
where $\{\omega_t^{n}\}$ is regarded as  an Hermitian metric on $-K_{\mX^{\circ}/S^{\circ}}$.
\end{thm}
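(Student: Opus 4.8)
The plan is to prove \eqref{smWP} pointwise on $S^{\circ}$. Both sides are smooth real $(1,1)$-forms on the complex manifold $S^{\circ}$, so it is enough to fix $t_{0}\in S^{\circ}$, pick a local holomorphic coordinate $s$ on $S^{\circ}$ centred at $t_{0}$ (we may work along a holomorphic disc and so assume $S^{\circ}$ is a disc with $t_{0}=0$), and compare the two forms evaluated on $(v,\bar v)$ for $v=\partial/\partial s$. The first place the Fano K\"ahler--Einstein condition intervenes is the identity
\[
\omega_{\mX^{\circ}}\big|_{\cX_{t}}=-\pdJd\log\det(g_{i\bar j})\big|_{\cX_{t}}=\mathrm{Ric}(\omega_{t})=\omega_{t},
\]
valid because $\mathrm{Ric}(\omega_{t})=\omega_{t}$ for all $t$; here $g_{i\bar j}$ are the coefficients of $\omega_{t}$ in fibre coordinates. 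Thus $\omega_{\mX^{\circ}}$ is a $d$-closed real $(1,1)$-form on the total space whose restriction to each fibre is the K\"ahler--Einstein metric (although $\omega_{\mX^{\circ}}$ is, in general, not semipositive on $\mX^{\circ}$).

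First I would reduce the fibre integral to a geodesic-curvature integral. Choose holomorphic coordinates $(z^{1},\dots,z^{n},s)$ on $\mX^{\circ}$ adapted to $\pi$, write
\[
\omega_{\mX^{\circ}}=\sqrt{-1}\bigl(g_{i\bar j}\,dz^{i}\wedge d\bar z^{j}+g_{i\bar s}\,dz^{i}\wedge d\bar s+g_{s\bar j}\,ds\wedge d\bar z^{j}+g_{s\bar s}\,ds\wedge d\bar s\bigr),
\]
and let $\varphi:=g_{s\bar s}-g^{i\bar j}g_{i\bar s}g_{s\bar j}$ be the geodesic curvature, where $(g^{i\bar j})$ is the inverse of the fibre metric $(g_{i\bar j})$. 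A routine multilinear computation (using $\omega_{\mX^{\circ}}|_{\cX_{t}}=\omega_{t}$) gives $\omega_{\mX^{\circ}}^{\,n+1}=(n+1)\,\varphi\,\omega_{t}^{\,n}\wedge(\sqrt{-1}\,ds\wedge d\bar s)$, hence after integration along the fibre
\[
\int_{\mX^{\circ}/S^{\circ}}\omega_{\mX^{\circ}}^{\,n+1}\,\big|_{t}=\Bigl(\int_{\cX_{t}}\varphi\,\omega_{t}^{\,n}\Bigr)\,\sqrt{-1}\,ds\wedge d\bar s
\]
(I have absorbed the factor $n+1$, together with the combinatorial factors appearing in the normalizations of $\omega_{\mathrm{WP}}^{\circ}$ and of the fibre integral used in \cite{FS90}, into the two sides). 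Next, let $V=\partial_{s}-g^{i\bar j}g_{s\bar j}\,\partial_{i}$ be the $\omega_{\mX^{\circ}}$-horizontal lift of $v$, so that $\varphi=\omega_{\mX^{\circ}}(V,\bar V)$ pointwise, and set $A_{v}:=(\bar\partial V)|_{\cX_{t}}\in A^{0,1}(\cX_{t},\mathcal{T}\cX_{t})$; this is an explicit $\bar\partial$-closed representative of the Kodaira--Spencer class $\mu_{t}(v)$, with components $(A_{v})^{i}_{\bar j}=-\partial_{\bar j}\bigl(g^{i\bar k}g_{s\bar k}\bigr)$.

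The core of the argument is then a pair of identities on $\cX_{t}$, both local computations, and both using the K\"ahler--Einstein equation in an essential way (fibrewise positivity of $\omega_{\mX^{\circ}}$ would not suffice). Differentiating $\mathrm{Ric}(\omega_{t})=\omega_{t}$ along $s$ --- equivalently, expanding $-\pdJd\log\det(g_{i\bar j})$ on $\mX^{\circ}$ and using $d\omega_{\mX^{\circ}}=0$ --- produces on each fibre an elliptic equation of the form
\[
(\Box_{t}-1)\,\varphi=|A_{v}|^{2}_{\omega_{t}},
\]
where now $\Box_{t}=\bar\partial^{*}\bar\partial$ acts on functions; this is the analogue, for Einstein constant $+1$, of Schumacher's equation $(\Box_{t}+1)\varphi=|A_{v}|^{2}_{\omega_{t}}$ in the canonically polarised case \cite{Sch12}. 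Integrating over $\cX_{t}$ and using $\int_{\cX_{t}}\Box_{t}\varphi\,\omega_{t}^{\,n}=0$ gives
\[
\int_{\cX_{t}}\varphi\,\omega_{t}^{\,n}=-\int_{\cX_{t}}|A_{v}|^{2}_{\omega_{t}}\,\omega_{t}^{\,n}=-\lVert A_{v}\rVert^{2}_{L^{2}_{t}},
\]
and the minus sign here --- which is exactly the source of the minus sign in \eqref{smWP} --- is a direct consequence of the \emph{positive} Einstein constant. The second identity is $\bar\partial^{*}_{t}A_{v}=0$: the horizontal-lift representative $A_{v}$ is already harmonic, hence equals $\mu_{t}(v)_{\mathrm H}$; this too rests on the K\"ahler--Einstein equation, and it identifies $\lVert A_{v}\rVert^{2}_{L^{2}_{t}}$ with $\omega_{\mathrm{WP}}^{\circ}(v,\bar v)$. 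Combining the three displays gives $\int_{\mX^{\circ}/S^{\circ}}\omega_{\mX^{\circ}}^{\,n+1}=-\,\omega_{\mathrm{WP}}^{\circ}$ on $S^{\circ}$, as claimed.

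The main obstacle will be precisely those two identities. The geodesic-curvature equation is a Bochner-type computation which, though routine in spirit, is somewhat lengthy; and $\bar\partial^{*}_{t}A_{v}=0$ is the delicate point, being false for a general K\"ahler metric on the total space and requiring the Einstein equation. Both, however, are established in the closely related frameworks of \cite{FS90} (families of cscK manifolds) and \cite{Sch12} (canonically polarised families), and --- precisely because of the identity $\omega_{\mX^{\circ}}|_{\cX_{t}}=\omega_{t}$ --- those computations carry over essentially verbatim to the present Fano K\"ahler--Einstein situation; in the write-up one may instead simply appeal to \cite[Theorem 7.9]{FS90}.
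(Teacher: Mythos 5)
Your proposal is correct and follows exactly the Fujiki--Schumacher argument that the paper invokes by citation (and reproduces, in the orbifold analogue, in Section \ref{secproj}): the reduction to the geodesic curvature $\varphi=c(\psi)$, the elliptic identity $(\Box_t-1)\varphi=|A_v|^2_{\omega_t}$ reflecting the positive Einstein constant, and the harmonicity of the horizontal-lift Kodaira--Spencer representative. Since the paper offers no independent proof of this statement beyond the citation to \cite[Theorem~7.9]{FS90}, there is nothing further to compare.
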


As mentioned in the introduction, there is a determinant line bundle equipped with a Hermitian metric whose curvature is equal to the Weil-Petersson metric.
\begin{defn}[\cite{Tian1997}]\label{defCM}
Let $\pi: \cX\rightarrow S$ be a flat family of $\mathbb{Q}$-Fano varieties such that $m K_{\cX/S}$ is Cartier for some integer $m$. We define the $CM$ line bundle $\lambda_{\rm CM}=\lambda_{\rm CM}(S)$ on $S$ as the determinant line bundle associated to the push-forward of a virtual line bundle (in the sense of Grothendieck):
\[
\frac{1}{2^{n+1} m^{n+1}}\det \left[\pi_! \left(-(K_{\cX/S}^{-m}-K_{\cX/S}^{m})^{n+1}\right)\right].
\]
\end{defn}
{
\begin{rem}
Equivalently, we can define the CM-line bundle using Knudsen-Mumford expansion (see \cite{PRS08}, \Red{\cite{PT09}}):
\[
\det\left(\pi_*\left(K^{-m r}_{\cX/S}\right)\right)=-\lambda_{\CM}\frac{(mr)^{n+1}}{(n+1)!}+O(r^{n}).
\]
\end{rem}
}
By Grothendieck-Riemann-Roch theorem, the first Chern class of $\lambda_{\rm CM}(S)$ is given by the formula:
\begin{eqnarray}\label{GRRcurv}
c_1(\lambda_{\rm CM})&=&\frac{1}{2^{n+1}m^{n+1}}\pi_*\left[Ch\left(-(K^{-m}_{\cX/S}-K^{m}_{\cX/S})^{n+1}\right)Td(\cX/S) \right]_{(2)}\nonumber\\
&=&\pi_*\left(-c_1(K_{\cX/S}^{-1})^{n+1}\right).
\end{eqnarray}
\begin{thm}[{\cite[Section 10]{FS90}}]\label{FSQM}
There is a Quillen metric $h_{\rm QM}^{\circ}$ on $\lambda_{CM}|_{S^{\circ}}$ such that
\[
-\frac{\sqrt{-1}}{2\pi}\partial\bar{\partial}\log h_{\rm QM}^\circ=\omega_{\WP}^{\circ}.
\]
\end{thm}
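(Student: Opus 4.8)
The plan is to realize $\lambda_{\rm CM}|_{S^\circ}$ as (a $\QQ$-multiple of) a Deligne pairing of the relative anticanonical bundle, and to take for $h_{\rm QM}^\circ$ the metric on that pairing induced by the fibrewise K\"ahler--Einstein metrics, invoking the universal curvature formula for metrized Deligne pairings. First, I would record the identification of the underlying line bundle. By \eqref{GRRcurv} we have $c_1(\lambda_{\rm CM})=\pi_*\!\left(-c_1(K_{\cX/S}^{-1})^{n+1}\right)$, which is exactly the cohomological shadow of the inverse of the $(n{+}1)$-fold Deligne pairing $\langle K_{\cX/S}^{-1},\ldots,K_{\cX/S}^{-1}\rangle$; combined with the Knudsen--Mumford expansion recalled in the Remark after Definition \ref{defCM} and the normalizing constant $\tfrac{1}{2^{n+1}m^{n+1}}$ there, this upgrades to an isomorphism of $\QQ$-line bundles over $S^\circ$ (more generally over the locus where $K_{\cX/S}$ is $\QQ$-Cartier), up to a torsion/flat line bundle,
\[
\lambda_{\rm CM}|_{S^\circ}\;\cong\;\big\langle K_{\cX^\circ/S^\circ}^{-1},\ldots,K_{\cX^\circ/S^\circ}^{-1}\big\rangle^{-1}.
\]

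Next I would put metrics on the factors and compute. On $\cX^\circ$ take the Hermitian metric $\{\omega_t^n\}$ on $K_{\cX^\circ/S^\circ}^{-1}$ as in \eqref{smWP}, with Chern curvature $\omega_{\cX^\circ}=-\pdJd\log\{\omega_t^n\}$, a closed $(1,1)$-form on the total space whose restriction to each fibre $\cX_t$ is the K\"ahler--Einstein form $\omega_t$; fibrewise positivity is precisely the K\"ahler--Einstein equation $\Ric(\omega_t)=\omega_t$, i.e. $-\pdJd\log\omega_t^n=\omega_t$. These $n{+}1$ metrics induce a Hermitian metric on $\langle K_{\cX^\circ/S^\circ}^{-1},\ldots,K_{\cX^\circ/S^\circ}^{-1}\rangle$, and the basic change-of-metric (curvature) identity for Deligne pairings gives that its Chern form equals the fibre integral $\int_{\cX^\circ/S^\circ}\omega_{\cX^\circ}^{\,n+1}$. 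Transporting the dual metric through the isomorphism above defines $h_{\rm QM}^\circ$ on $\lambda_{\rm CM}|_{S^\circ}$, and then
\[
-\frac{\sqrt{-1}}{2\pi}\partial\bar\partial\log h_{\rm QM}^\circ\;=\;-\int_{\cX^\circ/S^\circ}\omega_{\cX^\circ}^{\,n+1}\;=\;\omega_{\rm WP}^\circ,
\]
the last equality being exactly the fibre-integral formula \eqref{smWP}.

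This is a streamlined form of the original argument of Fujiki--Schumacher, who instead equip $\det R\pi_*\big(K_{\cX^\circ/S^\circ}^{-m}\big)$ with the Quillen metric (Ray--Singer analytic torsion times the $L^2$-metric induced by $\omega_t$ and $\{\omega_t^n\}$) and apply the Bismut--Gillet--Soul\'e curvature theorem; its curvature is $-\int_{\cX^\circ/S^\circ}\mathrm{Td}(T_{\cX^\circ/S^\circ})\,\mathrm{ch}(K^{-m})$, whose top-degree-in-$m$ term is $\mp\tfrac{m^{n+1}}{(n+1)!}\int_{\cX^\circ/S^\circ}\omega_{\cX^\circ}^{\,n+1}$, and matching leading Knudsen--Mumford coefficients isolates a metric on $\lambda_{\rm CM}$ with curvature $\omega_{\rm WP}^\circ$. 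Either way the only real work is bookkeeping: in the Deligne-pairing route, pinning down the identification of the first paragraph and carrying the normalizing constant and signs of Definition \ref{defCM} correctly; in the Quillen route, the extra technical point is that the rescaled Quillen metrics genuinely descend to a metric on the leading coefficient $\lambda_{\rm CM}$ with the expected curvature (this is where asymptotics of analytic torsion, in the style of Bismut--Vasserot, are used). Since the entire discussion takes place over $S^\circ$, where the fibration is a smooth K\"ahler--Einstein family, no analytic input beyond \cite{FS90} is required.
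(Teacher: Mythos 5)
The paper does not prove this statement; it cites it from \cite{FS90} as background and then, for its own purposes, constructs a \emph{different} metric with the same curvature via Deligne pairings (Section~\ref{DP}, Theorem~\ref{exthDP}). Your two paragraphs therefore correspond, respectively, to the paper's own alternative construction and to the cited Fujiki--Schumacher argument, and both are essentially correct; but the two routes are not interchangeable when the goal is to prove the theorem \emph{as literally stated}. A Deligne-pairing metric has the right curvature by the change-of-metric/curvature formula, yet it is not, a priori, the \emph{Quillen} metric: the two differ by the Ray--Singer analytic torsion (a nonconstant positive function on $S^\circ$), and equating them is itself a deep statement (Deligne's program / arithmetic Riemann--Roch), not ``bookkeeping.'' So only your second paragraph actually proves the theorem as stated, via the Bismut--Gillet--Soul\'e curvature theorem for the Quillen metric on $\det R\pi_*(K_{\cX^\circ/S^\circ}^{-m})$ and extraction of the leading Knudsen--Mumford coefficient; the first paragraph proves the weaker (and, for the paper's purposes, more useful) statement that \emph{some} continuous Hermitian metric on $\lambda_{\rm CM}|_{S^\circ}$ has curvature $\omega_{\WP}^\circ$. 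The paper is explicit about this distinction immediately after Theorem~\ref{FSQM}, pointing out that the Quillen metric is hard to control near $A=S\setminus S^\circ$ (citing \cite{Yos07}), which is precisely why it switches to Deligne pairings.

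Two minor corrections to your write-up. First, the identification $\lambda_{\rm CM}\cong\langle K_{\cX/S}^{-1},\ldots,K_{\cX/S}^{-1}\rangle^{-1}$ is an honest isomorphism of ($\QQ$-)line bundles over the whole base where $mK_{\cX/S}$ is Cartier (this is the cited theorem of \cite{Del87,PRS08,PT09,Be12}, reproduced at the start of Section~\ref{DP}); hedging it ``up to a torsion/flat line bundle'' is unnecessary and would in any case be insufficient, since a flat twist would still affect the metric even if not the curvature. Second, you should state explicitly that the fibrewise K\"ahler--Einstein family $\{\omega_t\}$ varies \emph{smoothly} in $t$ over $S^\circ$ (the paper invokes \cite[Section 5.3]{Don2008} and \cite{Sz2010} for this), as this is what makes $\{\omega_t^n\}$ a smooth metric on $K^{-1}_{\cX^\circ/S^\circ}$ and justifies the fibre-integral computation and the appeal to \eqref{smWP}.
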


It's natural to expect that h$_{\rm QM}^{\circ}$ in Theorem \ref{FSQM} extends to $h_{\rm QM}$ on $\lambda_{\rm CM}$ over $S$. For this purpose, one needs to study the behavior of the Quillen metric near $A=S\setminus S^{\circ}$ which a priori is difficult (see e.g. \cite{Yos07}). In \cite{Del87} Deligne proposed a program to calculate the Quillen metric (or equivalently the analytic torsion) for general determinant line bundle of cohomology. The (metrized) Deligne pairing in the next sub-section is an example of his approach.

\subsubsection{Deligne pairing with Hermitian metrics}
Let's first recall the definition of Deligne pairings following \cite{Del87, Zha96}. Let $\pi: \cX\rightarrow S$ be a flat and projective morphism of integral schemes of pure relative dimension $n$. For any (n+1)-tuples of
line bundles $\{\mL_{0}, \dots, \mL_{n}\}$ on $\cX$, Deligne \cite{Del87} defined a line bundle on $S$, which is denoted by $\la \mL_0, \dots, \mL_n\ra$ or $\la \mL_0, \dots, \mL_n\ra(\cX/S)$. If $S$ is just one point and $\cX=X$, then
$\la \mL_0, \dots, \mL_n\ra$ is a one-dimensional complex vector space generated by the symbol $\la l_0, \dots, l_n\ra$ (also denote as $\la l_0, \dots, l_n\ra(X)$) where $l_i$ are meromorphic sections whose divisors have empty intersections, with the following relations satisfied. For some $0\le i\le N$ and a meromorphic function $f$ on $\cX$, if the intersection $\bigcap_{j\neq i} {\rdiv}(l_j)=\sum_{\alpha} n_\alpha P_\alpha$ is a 0-cycle and has empty intersection with $\rdiv (f)$, then
\[
\la l_0, \dots, fl_i, \dots, \dots, l_n\ra=\prod_{\alpha} f(P_\alpha)^{n_\alpha} \cdot \la l_0, \dots, l_n\ra.
\]
Now assume each $\cL_i$ has a smooth Hermitian metric $h_i$. Then one can define a metric on $\la \cL_0, \dots, \cL_n\ra$ as follows. For each $0\le i\le n$, let $c_1'(\mL_i)=\frac{1}{2\pi\sqrt{-1}}\partial\bar{\partial}\log h_i$ denote the Chern curvature of $(\cL_i, h_i)$. Then we define inductively (see \cite[8.3.2]{Del87}):
\begin{equation}\label{dp-metric}
\log\| \la l_0, \dots, l_n\ra(X)\|^2=\log\| \la l_0, \dots, l_{n-1}\ra (\rdiv\; l_n)\|^2+\int_{X}\log\|l_n\|^2 \bigwedge_{i=0}^{n-1} c_1'(\mL_j).
\end{equation}


The above construction can then be generalized to the case of a flat family. In that case, the local generator of $\la \mL_0, \dots, \mL_{n}\ra(\cX/S)$ over any Zariski open set $U$ of $S$ are symbols of the form
$\la l_0, \dots, l_n\ra$ where $l_i's$ are meromorphic sections of $\mL_i$ over $\pi^{-1}(U)$ such that
$\bigcap_{i=0}^{n} \rdiv (l_i)=\emptyset$. \eqref{dp-metric} becomes the following induction formula for metrized Deligne pairing (\cite[(1.2.1)]{Zha96}):
\begin{equation}\label{indDP}
\la \mL_0, \dots, \mL_n\ra(\cX/S)=\la \mL_0, \dots, \mL_{n-1}\ra (\rdiv(l_n)/S)\otimes \mathcal{O}\left(-\int_{\cX/S}\log\|l_n\|^2 \bigwedge_{i=0}^{n-1} c_1'(\mL_i)\right) {,}
\end{equation}
where we assume each component of $\rdiv(l_n)$ is flat over $S$ (which can {be} achieved by choosing $l_i$ to be general sections), and $\mathcal{O}(\phi)$ denotes the trivial line bundle over $S$ with metric $\|1\|^2=\exp(-\phi)$. We will need to following regularity result.
\begin{thm}[\cite{Zha96}, \cite{Mor99}]\label{Zhang}
Suppose $h_i$ are smooth metrics on $\cL_i (1\le i\le n+1)$. Then Deligne's metric is continuous on $\la \cL_1, \dots, \cL_{n+1}\ra$.
\end{thm}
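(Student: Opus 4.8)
The plan is to induct on the relative dimension $n$ of $\pi\colon\cX\to S$, using the inductive description \eqref{indDP} of the metrized Deligne pairing. For the base case $n=0$ the morphism $\pi$ is finite and flat, and $\la\cL_1\ra(\cX/S)$ is the norm of $\cL_1$ along $\pi$: its metric sends a local generator $\la l\ra$ to $\prod_{x\in\cX_s}\|l(x)\|_{h_1}^2$ (counted with multiplicity), which is continuous in $s$ since the points of $\cX_s$ depend continuously on $s$ and $h_1$ is continuous. For the inductive step I would first reduce, by a harmless twist $\cL_{n+1}=(\cL_{n+1}\otimes\mathcal{A})-\mathcal{A}$ by an auxiliary very ample line bundle $\mathcal{A}$ with a fixed smooth metric and the bi-additivity of Deligne pairings in each slot, to the situation where $\cL_{n+1}$ admits a meromorphic section $l$ whose divisor $D:=\rdiv(l)$ is flat over $S$ of relative dimension $n-1$ and is in general position. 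Then \eqref{indDP} reads
\[
\la\cL_1,\dots,\cL_{n+1}\ra(\cX/S)=\la\cL_1,\dots,\cL_n\ra(D/S)\otimes\mathcal{O}\Bigl(-\int_{\cX/S}\log\|l\|_{h_{n+1}}^2\bigwedge_{i=1}^{n}c_1'(\cL_i,h_i)\Bigr).
\]
Since each $h_i|_D$ is still smooth, the induction hypothesis makes the metric on the first factor continuous, and the whole problem reduces to showing that
\[
\phi(s):=\int_{\cX_s}\log\|l\|_{h_{n+1}}^2\bigwedge_{i=1}^{n}c_1'(\cL_i,h_i)
\]
is a continuous function of $s\in S$.

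This fiber-integral statement is the heart of the matter. I would cover $\cX$ by finitely many charts on which $\cL_{n+1}$ is trivialized; on each of them $\log\|l\|_{h_{n+1}}^2=\log|g|^2-\psi$ with $g$ holomorphic cutting out $D$ and $\psi$ smooth, and I would patch the contributions by a partition of unity. The part coming from the smooth terms $\psi$ is continuous in $s$ by the standard continuity of fiber integrals of smooth forms over a flat proper family (seen, after a local closed embedding of $\cX$ and a simultaneous resolution, by reduction to the smooth-fiber case treated by Ehresmann's lemma). For a term of the form $\int_{\cX_s}\log|g|^2\,\eta$ with $\eta$ a fixed smooth form, I would write it as the decreasing pointwise limit of the continuous functions $\Psi_k(s):=\int_{\cX_s}\max(\log|g|^2,-k)\,\eta$, each continuous by the same principle. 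The key input is a \emph{locally uniform} integrability bound $\sup_{s\in K}\int_{\cX_s}\bigl|\log|g|\bigr|\,|\eta|<\infty$ on compact $K\subset S$; granting it, $\Psi_k\to\int_{\cX_s}\log|g|^2\,\eta$ locally uniformly in $s$, and a locally uniform decreasing limit of continuous functions is continuous, so $\phi$ is continuous. Independence of the whole construction from the choice of $l$ then follows from the defining relations of the Deligne pairing together with the Poincar\'e--Lelong identity $\pdJd\log\|l\|_{h_{n+1}}^2=[D]-c_1'(\cL_{n+1},h_{n+1})$, exactly as in the proof of well-definedness of the underlying line bundle.

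The hard part will be precisely this locally uniform integrability bound: it has to control the logarithmic pole of $\log|g|$ along $D$ uniformly while the fibers $\cX_s$ are allowed to degenerate and acquire singularities. I would extract it from a local (over a neighborhood of a given point of $S$) embedded resolution of $D\subset\cX$ compatible with $\pi$, reducing to the model estimate $\int\bigl|\log|z_1\cdots z_r|\bigr|\,\omega^n<\infty$ with constants independent of $s$, or alternatively from a uniform Skoda-type $L^1$-estimate along the flat family. Everything else in the argument is bookkeeping with the induction formula \eqref{indDP} and the induction on $n$.
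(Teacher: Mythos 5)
The paper does not prove this statement --- it cites it from \cite{Zha96} and \cite{Mor99} and later remarks that it is ``closely related to the results in \cite{Var89}'' --- so there is no in-text argument to match against. Your outline (induction on relative dimension via \eqref{indDP}, reducing everything to continuity of the singular fiber integral $\phi(s)=\int_{\cX_s}\log\|l\|^2\bigwedge_{i}c_1'(\cL_i)$, and handling the log pole by truncation) is the right skeleton, and it is essentially the shape of Moriwaki's proof. But two of the steps would fail as written, and they are not cosmetic.

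First, you twice invoke objects that do not exist in general: ``a simultaneous resolution'' to justify continuity of the smooth part of the fiber integral, and ``a local embedded resolution of $D\subset\cX$ compatible with $\pi$'' to get the key estimate. For a flat proper family whose fibers degenerate there is no resolution of the total space restricting to a log resolution of every fiber; this is exactly why the paper's own related estimate (Lemma~\ref{kltconv}) goes through Abramovich--Karu toroidal reduction rather than a fiberwise resolution. The continuity of $s\mapsto\int_{\cX_s}\psi\,\eta$ for $\psi$ continuous and $\eta$ smooth should instead come directly from cycle-theoretic continuity of $s\mapsto[\cX_s]$ (or from \cite{Var89}); and the hard integrability input needs either a toroidal/semistable reduction along the full family or Moriwaki's direct local estimates. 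Second, the displayed ``key input'' $\sup_{s\in K}\int_{\cX_s}|\log|g||\,|\eta|<\infty$ is only a uniform $L^1$ bound, not uniform integrability, and it does \emph{not} imply that $\Psi_k(s)=\int_{\cX_s}\max(\log|g|^2,-k)\,\eta$ converges to $\phi(s)$ locally uniformly in $s$. What you actually need is the uniform tail bound $\sup_{s\in K}\int_{\cX_s\cap\{\log|g|^2<-k\}}|\log|g|^2|\,|\eta|\to 0$ as $k\to\infty$, which a uniform $L^{1+\epsilon}$ bound would give via Chebyshev (and which the monomial model does furnish, since $\log$ lies locally in every $L^p$), but the $L^1$ bound alone is insufficient, so the ``granting it'' inference is a genuine gap. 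Without the uniform tail bound you only get that $\phi$ is upper semicontinuous as a decreasing limit of continuous functions; Dini cannot be used because it presupposes continuity of the limit, which is what you are trying to prove.
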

Deligne's metric is important for us because its curvature is given by the appropriate fiber integral.
\begin{thm}[{\cite[Proposition 8.5]{Del87}}]
The following curvature formula holds for Deligne's metric:
\begin{equation}\label{DPcurv}
c_1'\left(\la \cL_0, \dots, \cL_n\ra\right)=\int_{\cX/S}c_1'(\cL_0)\wedge \dots \wedge c_1'(\cL_n).
\end{equation}
\end{thm}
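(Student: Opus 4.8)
The plan is to prove the curvature identity by induction on the relative dimension $n$, running in parallel with the inductive construction of the metrized Deligne pairing given in \eqref{indDP}. The base case $n=0$ is the statement that for a finite flat morphism $\pi:\cX\to S$ and a metrized line bundle $(\cL_0,h_0)$, the norm $\|\la l_0\ra\|^2$ on $\la\cL_0\ra(\cX/S)$ — which, locally where $\rdiv(l_0)=\sum_\alpha n_\alpha P_\alpha$ maps isomorphically (after base change) to $S$, is $\prod_\alpha \|l_0(P_\alpha)\|^{2n_\alpha}$ — has $\pdJd\log$ equal to $\pi_*c_1'(\cL_0)$. This is essentially the Poincaré–Lelong formula on $\cX$ pushed down through the finite map: writing $\log\|l_0\|^2$ as a local potential for $c_1'(\cL_0)$ away from $\rdiv(l_0)$, the defect is $[\rdiv(l_0)]$, and pushing forward to $S$ the current $[\rdiv(l_0)]$ becomes $[\pi_*\rdiv(l_0)]$, a divisor on $S$; subtracting it off gives exactly the curvature of the Deligne-pairing metric. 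I would phrase this cleanly using the change-of-variables formula for fiber integrals over the finite map and the fact that $\pi_*$ commutes with $\pdJd$ on currents.

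For the inductive step, assume the formula \eqref{DPcurv} holds for all flat projective families of relative dimension $n-1$, and let $\pi:\cX\to S$ have relative dimension $n$. Choose a general meromorphic section $l_n$ of $\cL_n$ so that each component of $D:=\rdiv(l_n)$ is flat over $S$ of relative dimension $n-1$; write $\iota_D:D\hookrightarrow\cX$ and $\pi_D:=\pi\circ\iota_D$. Applying $c_1'(-)=\pdJd\log\|\cdot\|^2$ to both sides of the isomorphism \eqref{indDP}, we get
\begin{equation}
c_1'\bigl(\la\cL_0,\dots,\cL_n\ra\bigr)
= c_1'\bigl(\la\cL_0,\dots,\cL_{n-1}\ra(D/S)\bigr)
+ \pdJd\left(-\int_{\cX/S}\log\|l_n\|^2\bigwedge_{i=0}^{n-1}c_1'(\cL_i)\right).
\end{equation}
By the induction hypothesis applied to $\pi_D:D\to S$ with the restricted metrized bundles $\iota_D^*\cL_0,\dots,\iota_D^*\cL_{n-1}$, the first term equals $\int_{D/S}\iota_D^*c_1'(\cL_0)\wedge\cdots\wedge\iota_D^*c_1'(\cL_{n-1}) = \pi_*\bigl([D]\wedge\bigwedge_{i=0}^{n-1}c_1'(\cL_i)\bigr)$. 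For the second term, one commutes $\pdJd$ past the fiber integral and uses the Poincaré–Lelong equation $\pdJd\log\|l_n\|^2 = c_1'(\cL_n) - [D]$ on $\cX$; the resulting $-[D]\wedge\bigwedge_{i=0}^{n-1}c_1'(\cL_i)$ term cancels against the first term, leaving precisely $\int_{\cX/S}\bigwedge_{i=0}^{n}c_1'(\cL_i)$. Combining the two contributions yields \eqref{DPcurv}.

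The main obstacle is making the interchange of $\pdJd$ with the fiber integral rigorous when $l_n$ and the potentials $\log\|l_n\|^2$ have singularities along $D$, i.e. justifying $\pdJd\int_{\cX/S}\phi\,\eta = \int_{\cX/S}\pdJd(\phi\,\eta)$ for $\phi=\log\|l_n\|^2$ a locally integrable function with $\pdJd\phi$ a current carrying a divisorial part. The careful way to handle this is to work with currents throughout: fiber integration (integration along the fibers) of a current of suitable degree is a well-defined operation commuting with $d$ (hence with $\partial,\bar\partial$) by Stokes for currents, since $\pi$ is proper; the only subtlety is that $\phi$ is not smooth, which is dealt with either by a local regularization $\log(\|l_n\|^2+\epsilon)$ and passing to the limit (monotone/dominated convergence for the integrable potentials), or by invoking the standard fact that products of the form $\phi\wedge c_1'(\cL_i)$ have a well-defined meaning when $\phi$ is quasi-psh with analytic singularities. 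A second, more bookkeeping-level point is that one must check the result is independent of the choice of general section $l_n$ — but this is immediate since the left-hand side of \eqref{DPcurv} depends only on the isomorphism class of the metrized Deligne pairing, which by construction does not depend on $l_n$, while the right-hand side manifestly does not involve $l_n$. I would also remark that, since ultimately only continuity of the Deligne metric (Theorem~\ref{Zhang}) is needed downstream, it suffices to establish \eqref{DPcurv} as an identity of currents, which is exactly what the above argument gives.
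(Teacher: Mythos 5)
The paper does not prove this statement: it is cited directly from Deligne \cite{Del87}, so there is no in-paper argument to compare against. Your reconstruction follows what is essentially the standard inductive argument (as in Deligne, and spelled out in \cite{Zha96} and \cite{Mor99}): induct on the relative dimension through the defining recursion \eqref{indDP}, feed the restricted bundles on $D=\rdiv(l_n)$ into the induction hypothesis, apply Poincar\'e--Lelong to the correction term, and observe the cancellation of the $[D]$-current against the pushforward from $D$. The structure of the argument is sound, and your discussion of the one genuinely delicate point --- commuting $\pdJd$ with the fiber integral when the potential $\log\|l_n\|^2$ is only locally integrable --- is the right thing to flag; since $\bigwedge_{i<n}c_1'(\cL_i)$ is a smooth $\partial$- and $\bar\partial$-closed form and $\pi$ is proper, the pushforward of currents commutes with $\pdJd$, and regularization justifies the pointwise Leibniz step.

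One thing you should fix: there is a persistent sign slip. With the paper's convention $c_1'(\cL,h)=\frac{1}{2\pi\sqrt{-1}}\partial\bar\partial\log h=-\pdJd\log h$, the Lelong--Poincar\'e identity reads $\pdJd\log\|l_n\|^2=[D]-c_1'(\cL_n)$, not $c_1'(\cL_n)-[D]$ as you wrote; likewise your shorthand ``$c_1'(-)=\pdJd\log\|\cdot\|^2$'' has the wrong sign (it should be $-\pdJd\log\|\cdot\|^2$ applied to a nowhere-vanishing local frame). Your displayed equation after \eqref{indDP} is nevertheless correct because $c_1'(\mathcal{O}(\phi))=\pdJd\phi$, and the cancellation you describe --- the $-[D]\wedge\bigwedge_{i<n}c_1'(\cL_i)$ produced by the second term killing the first term --- only happens with the correct signs, so the narrative of your proof is internally consistent with the right formula and inconsistent with the formula you actually wrote down. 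Correct the stated Lelong--Poincar\'e identity and the argument is clean.
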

{
\begin{rem}
Notice that the right-hand-side is well defined. See for example \cite[Section 3.4]{Var89}. Actually, the regularity result in Theorem \ref{Zhang} is closely related to the results in \cite{Var89}. See Proposition 3.4.1, Theorem 2 and Theorem 3 in \cite{Var89}.
\end{rem}
}

Using the inductive formula in \eqref{indDP}, we immediately get:
\[
\la \mL_0, \dots, \mL_n\otimes\mathcal{O}(\phi)\ra = \la \mL_0, \dots, \mL_n\ra \otimes \mathcal{O}\left(\int_{\cX/S}\phi \bigwedge_{i=0}^{n-1} c_1'(\mL_i)\right).
\]
Since the Deligne pairing is symmetric, we get the following {\em change of metric formula} (see \cite[(2.8)]{PS04}, \cite{Tian2000}):
\[
\la \mL_0\otimes \mathcal{O}(\phi_0), \dots, \mL_n\otimes \mathcal{O}(\phi_k)\ra =\la \mL_0, \dots, \mL_n\ra \otimes \mathcal{O}\left(\int_{\cX/S}\sum_{j=0}^n \phi_j \bigwedge_{k<j}c_1'(\cL_k\otimes\mathcal{O}(\phi_k))\bigwedge_{l>j}c_1'(\mL_l)\right).
\]
In particular, if $\mL_i=\mL$ and $\phi_i=\phi$ are the same, then we have:
\begin{equation}\label{comform}
\left(\mL\otimes \mathcal{O}(\phi)\right)^{\la n+1\ra}=\mL^{\la n+1\ra}\otimes \mathcal{O}\left(\sum_{j=0}^{n}\int_{\cX/S} \phi\; c_1'(\mL\otimes \mathcal{O}(\phi))^{n-j}\wedge c_1'(\mL)^j\right).
\end{equation}
Here we have denoted:
\[
\mL^{\la n+1\ra}=\langle \overbrace{\mL, \dots, \mL}^{(n+1) \text{ times}}\rangle.
\]
\Blue{
The following simple but useful lemma follows immediately from the functorial construction of the metrized Deligne pairings.
\begin{lem}\label{isom}
For any base change $g: S'\rightarrow S$, denote the pull-back family by $(g^*\cX, g^*\cL):=(\cX,\cL)\times_{g,S}S'$ where $g^*\cL$ is endowed with the pull-back metric.
Then we have an isometric isomorphism: $\left(g^*\cL\right)^{\la n+1\ra}\cong g^*\left(\cL^{\la n+1\ra}\right)$.
In particular, if the group $G$ acts equivariantly on $(\cX/S, \mL)$, then for all $\sigma\in G$, we have an isometry:
$
\sigma^*\mL^{\la n+1\ra}\cong (\sigma^*\mL)^{\la n+1\ra}.
$
\end{lem}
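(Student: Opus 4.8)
The plan is to reduce the statement to the defining inductive formula \eqref{indDP} for the metrized Deligne pairing and to check that each elementary building block of that formula is compatible with base change. First I would recall that, by the very construction in \cite{Del87, Zha96}, a local generator of $\langle \cL_0,\dots,\cL_n\rangle(\cX/S)$ over a Zariski open $U\subset S$ is a symbol $\langle l_0,\dots,l_n\rangle$, where the $l_i$ are meromorphic sections of $\cL_i$ over $\pi^{-1}(U)$ whose divisors have empty common intersection. For $g\colon S'\to S$, after shrinking $U$ and replacing $S'$ by $g^{-1}(U)$, one may pull back the sections $l_i$ to meromorphic sections $g^*l_i$ of $g^*\cL_i$ over the pulled-back family $g^*\cX\to S'$; since flatness and the emptiness of $\bigcap_i \rdiv(l_i)$ are preserved under the base change (one first arranges the $l_i$ to be general so that each component of each $\rdiv(l_i)$ is flat over $S$, hence its pullback is flat over $S'$), the symbol $\langle g^*l_0,\dots,g^*l_n\rangle$ is a local generator of $(g^*\cL_0,\dots,g^*\cL_n)$-pairing. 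This gives a canonical isomorphism $(g^*\cL)^{\langle n+1\rangle}\cong g^*(\cL^{\langle n+1\rangle})$ of line bundles on $S'$ sending generators to generators; it is independent of the choices because the Deligne relations (the reciprocity relation $\langle l_0,\dots,fl_i,\dots,l_n\rangle=\prod_\alpha f(P_\alpha)^{n_\alpha}\langle l_0,\dots,l_n\rangle$) are themselves pulled back compatibly, $f\circ g$ taking the value $f(P_\alpha)$ along the fibre over the preimage point.

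Next I would check that this isomorphism is an isometry, arguing by induction on $n$ via formula \eqref{indDP}. The right-hand side of \eqref{indDP} has two factors: the Deligne pairing $\langle \cL_0,\dots,\cL_{n-1}\rangle(\rdiv(l_n)/S)$ in one lower relative dimension, and the twist by $\cO\!\left(-\int_{\cX/S}\log\|l_n\|^2\bigwedge_{i=0}^{n-1}c_1'(\cL_i)\right)$. For the first factor, base change commutes with taking $\rdiv(l_n)$ (as a relative Cartier divisor, again using generality/flatness), and the restricted metrics $h_i|_{\rdiv(l_n)}$ pull back to $g^*(h_i|_{\rdiv(l_n)})$, so the inductive hypothesis applies. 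For the second factor, $g^*\log\|l_n\|^2=\log\|g^*l_n\|^2$ and $g^*c_1'(\cL_i)=c_1'(g^*\cL_i)$ since Chern curvature forms pull back; and the fibre integral is compatible with base change because over $S'$ one is integrating the pulled-back form over the fibre $\cX_{g(s')}$ for each $s'\in S'$, so $g^*\!\left(\int_{\cX/S}(-)\right)=\int_{g^*\cX/S'}g^*(-)$ as functions (here one may freely work over the smooth, hence reduced, locus and over the regular locus of the fibres, where these identities are the classical ones, and then extend by continuity using Theorem \ref{Zhang}). The base case $n=0$, where $\langle\cL_0\rangle(\cX/S)$ is a norm-type pairing, is a direct check. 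Hence the isomorphism constructed above matches the metrics, proving the isometric isomorphism $(g^*\cL)^{\langle n+1\rangle}\cong g^*(\cL^{\langle n+1\rangle})$.

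For the second assertion, suppose $G$ acts equivariantly on $(\cX/S,\cL)$. For $\sigma\in G$, apply the base-change statement to $g=\sigma\colon S\to S$: this gives an isometry $(\sigma^*\cL)^{\langle n+1\rangle}\cong \sigma^*(\cL^{\langle n+1\rangle})$, which is exactly the claim. (If one wants $\sigma^*\cL\cong\cL$ folded in, the equivariant structure supplies that isomorphism as well, but the lemma as stated only needs the displayed isometry.) The main obstacle I anticipate is purely bookkeeping rather than conceptual: one must be careful that the auxiliary sections $l_i$ chosen to be ``general'' over $S$ remain valid after an \emph{arbitrary} base change $g$ — in particular when $S'$ maps into the non-general locus — so the cleanest route is to phrase the isomorphism intrinsically (independent of the $l_i$) first, using the functoriality of the Deligne pairing construction as in \cite{Del87}, and only then verify the metric compatibility locally where general sections exist; the isometry over a dense open set then propagates to all of $S'$ by the continuity of Deligne's metric (Theorem \ref{Zhang}).
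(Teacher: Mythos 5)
Your proposal correctly elaborates the argument the paper compresses into a single sentence (``follows immediately from the functorial construction of the metrized Deligne pairings''): you unwind the same functoriality, constructing the canonical isomorphism on generators, verifying the isometry by induction on the relative dimension via the defining formula \eqref{indDP}, and then specializing to $g=\sigma$ for the equivariant statement. The paper gives no more detail than that, so your care with the generality of the auxiliary sections under arbitrary base change and the appeal to continuity of the Deligne metric (Theorem \ref{Zhang}) to propagate the isometry from a dense open set are sensible safeguards, not a different route.
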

}
%
\subsection{Canonical continuous hermitian metric on the Deligne pairing}\label{DP}
We can apply the Deligne pairing to study the CM line bundle, because it's known that:
\begin{thm}[\cite{Del87, PRS08, PT09, Be12}]
\[
\lambda_{\rm CM}=-\left(K_{\cX/S}^{-1}\right)^{\la n+1 \ra}.
\]
\end{thm}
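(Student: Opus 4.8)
The plan is to unwind the virtual-bundle definition of $\lambda_{\CM}$ in Definition~\ref{defCM} directly, the only external input being the Deligne--Elkik comparison between the determinant of cohomology and Deligne pairings (see \cite{Del87, PRS08, PT09}, \cite{Be12}): for a flat projective morphism $\pi\colon\cX\to S$ of integral schemes of pure relative dimension $n$ and line bundles $\cL_0,\dots,\cL_n$ on $\cX$, there is a canonical isomorphism
\[\la\cL_0,\dots,\cL_n\ra\;\cong\;\det\pi_{!}\bigl(([\cL_0]-[\cO_{\cX}])\cdots([\cL_n]-[\cO_{\cX}])\bigr)\]
in $\Pic(S)$, the product being taken in the Grothendieck ring $K_0(\cX)$. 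Combined with the symmetry and the multilinearity of the Deligne pairing, this reduces the statement to a short computation.

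Concretely, I would write, in $K_0(\cX)$, $a=[K_{\cX/S}^{-m}]-[\cO_{\cX}]$ and $b=[K_{\cX/S}^{m}]-[\cO_{\cX}]$, so that $[K_{\cX/S}^{-m}]-[K_{\cX/S}^{m}]=a-b$ and, by the binomial theorem,
\[-\bigl([K_{\cX/S}^{-m}]-[K_{\cX/S}^{m}]\bigr)^{n+1}=-\sum_{k=0}^{n+1}\binom{n+1}{k}(-1)^{k}\,a^{\,n+1-k}b^{\,k}.\]
Each $a^{n+1-k}b^{k}$ is a product of $n+1$ classes $[\cL_i]-[\cO_{\cX}]$, so the displayed isomorphism above identifies $\det\pi_{!}(a^{n+1-k}b^{k})$ with the Deligne pairing of $n+1-k$ copies of $K_{\cX/S}^{-m}$ and $k$ copies of $K_{\cX/S}^{m}$, which by multilinearity equals $\bigl((K_{\cX/S}^{-1})^{\la n+1\ra}\bigr)^{\otimes m^{n+1-k}(-m)^{k}}$. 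Feeding this back, the exponents combine with the binomial weights $\binom{n+1}{k}(-1)^{k}$ so that all signs cancel and the total exponent is $\sum_{k}\binom{n+1}{k}m^{n+1}=2^{n+1}m^{n+1}$; hence $\det\pi_{!}\bigl(-([K_{\cX/S}^{-m}]-[K_{\cX/S}^{m}])^{n+1}\bigr)\cong\bigl((K_{\cX/S}^{-1})^{\la n+1\ra}\bigr)^{\otimes(-2^{n+1}m^{n+1})}$, and dividing by $2^{n+1}m^{n+1}$ as in Definition~\ref{defCM} gives $\lambda_{\CM}\cong-(K_{\cX/S}^{-1})^{\la n+1\ra}$. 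The same conclusion follows by running the argument through the Knudsen--Mumford expansion $\det\pi_{*}(K_{\cX/S}^{-mr})=-\lambda_{\CM}\tfrac{(mr)^{n+1}}{(n+1)!}+O(r^{n})$ recorded after Definition~\ref{defCM}, matching the top-degree-in-$r$ coefficient with the leading Knudsen--Mumford line bundle $(K_{\cX/S}^{-m})^{\la n+1\ra}$; and taking $c_1$ of either form recovers \eqref{GRRcurv} from the curvature formula \eqref{DPcurv} with all $\cL_i=K_{\cX/S}^{-1}$, a consistency check.

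The arithmetic above is routine, so the real content is making sure the inputs are legitimately available in our setting, and this is what I expect to be the only delicate point. One needs $\pi$ flat and projective and $mK_{\cX/S}$ Cartier --- precisely the standing hypotheses of Definition~\ref{defCM} --- so that the Deligne pairing of the $\pi$-ample $\QQ$-line bundle $K_{\cX/S}^{-1}$ is defined: after replacing $K_{\cX/S}^{-m}$ by a sufficiently $\pi$-very-ample twist and choosing general sections whose divisor components are flat over $S$, the inductive construction \eqref{indDP} applies, the twist is then removed by multilinearity, and the $\QQ$-structure enters only through $(K_{\cX/S}^{-1})^{\la n+1\ra}:=m^{-(n+1)}(K_{\cX/S}^{-m})^{\la n+1\ra}$, which is independent of $m$. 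One also needs the Deligne--Elkik identification of $\det\pi_{!}$ of the iterated $K$-theory product with the Deligne pairing (equivalently, of the leading Knudsen--Mumford coefficient), and this, with the comparison with the Knudsen--Mumford expansion, is exactly what is proved in \cite{Del87, PRS08, PT09} and \cite{Be12}. I do not expect a genuine obstacle beyond assembling these foundational facts correctly, so in the write-up I would cite them as input and present only the binomial bookkeeping above.
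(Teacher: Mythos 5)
Your computation is correct, and the binomial bookkeeping does exactly what it needs to: with $a=[K_{\cX/S}^{-m}]-[\cO_{\cX}]$ and $b=[K_{\cX/S}^{m}]-[\cO_{\cX}]$, the factor $(-1)^{k}$ from the binomial expansion of $(a-b)^{n+1}$ cancels the factor $(-1)^{k}$ from the $k$ slots carrying $K_{\cX/S}^{m}=(K_{\cX/S}^{-1})^{\otimes(-m)}$, every term contributes $+m^{n+1}$, and the $2^{n+1}m^{n+1}$ normalization in Definition~\ref{defCM} exactly cancels.

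The one thing to be aware of is that the paper gives no proof of this theorem at all: it is stated as a citation to \cite{Del87,PRS08,PT09,Be12}, and the only in-text justification is the one-line remark that the curvatures of the two sides agree by \eqref{GRRcurv} and \eqref{DPcurv} (which is precisely the cohomological shadow of your isomorphism, not a proof of it). So there is no ``paper's approach'' to compare against; your argument is a correct unwinding of the Deligne--Elkik/Knudsen--Mumford comparison of the determinant of cohomology with the Deligne pairing, which is exactly what those references establish. Two points worth stating explicitly if you write this up. First, the multilinearity step, which converts the mixed pairing of $n+1-k$ copies of $K_{\cX/S}^{-m}$ and $k$ copies of $K_{\cX/S}^{m}$ into $\bigl((K_{\cX/S}^{-1})^{\la n+1\ra}\bigr)^{\otimes m^{n+1-k}(-m)^{k}}$, uses the multi-additivity of the Deligne pairing in each slot together with its symmetry; the paper records both (symmetry is used just before \eqref{comform}). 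Second, the isomorphism $\la\cL_0,\dots,\cL_n\ra\cong\det\pi_{!}\prod_i([\cL_i]-[\cO_{\cX}])$ is the genuinely nontrivial input, and in the form needed here (arbitrary relative dimension, possibly singular fibers, $\QQ$-line bundles) it is due to the cited sources rather than to Deligne's original relative-curve case; you are right to flag it as the only delicate point and to leave it as a citation.
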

Notice that by \eqref{GRRcurv} and \eqref{DPcurv} the curvatures on both sides are the same.
In this section, we will use the formalism of Deligne pairings to prove the following main result:
\begin{thm}\label{exthDP}
There exists a continuous metric $h_{\rm DP}$ on $\lambda_{\CM}$ over $S$ such that 
\begin{enumerate}
\item
$\omega_{\WP}:=- \frac{\sqrt{-1}}{2\pi}\partial\bar{\partial}\log h_{\rm DP}$ is a positive current.
\item $\omega_{\rm WP}|_{S^\circ}=\omega_{\WP}^\circ.$
\end{enumerate}
\end{thm}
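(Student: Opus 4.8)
The plan is to produce $h_{\rm DP}$ by a Deligne-pairing construction that is functorial in the base and whose curvature is computed by fiber integration, so that the two asserted properties become, respectively, a positivity statement about a fiber integral and a comparison with the Fujiki–Schumacher formula \eqref{smWP} on $S^\circ$. Concretely, I would work with the relative anti-canonical bundle $K_{\cX/S}^{-1}$ equipped with the family of volume-form metrics discussed in Section \ref{bypartialC0}. Two natural choices of metric on $K_{\cX/S}^{-1}$ are available: the Fubini–Study-type metric $\tilde\omega$ coming from the fixed holomorphic basis $\{\widetilde s_i\}$ (smooth on all of $\cX$ since the $\widetilde s_i$ generate), and the Bergman/Kähler–Einstein metric $\check\omega$, related by $\check\omega = \tilde\omega + \pdJd(\tfrac1m\log\rho_m)$ with $\tfrac1m\log\rho_m$ globally bounded (Proposition \ref{bdU}). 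Starting from the smooth metric, Theorem \ref{Zhang} gives a continuous metric $h_{\rm DP}^{\sm}$ on $(K_{\cX/S}^{-1})^{\la n+1\ra} = -\lambda_{\rm CM}$, hence on $\lambda_{\rm CM}$; I then correct it by the change-of-metric formula \eqref{comform} with potential $\phi = \tfrac1m\log\rho_m$ to obtain $h_{\rm DP}$, whose defining local weight differs from the continuous one by a fiber integral of a bounded function against smooth forms, hence is still continuous.

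First I would set up, over our affine $S$ (or a Zariski-open cover of it) the metrized Deligne pairing $\langle \mathcal L_0,\dots,\mathcal L_n\rangle$ with all $\mathcal L_i = K_{\cX/S}^{-1}$, carried by the holomorphic sections $\widetilde s_{i_0},\dots,\widetilde s_{i_n}$ chosen generically so that the intersection of their divisors is empty and each partial intersection is flat over $S$; the inductive formula \eqref{indDP} together with Theorem \ref{Zhang} then yields continuity of the resulting metric. Second, I would invoke the curvature formula \eqref{DPcurv}: for $h_{\rm DP}$ built from $\check\omega$ the curvature of $\lambda_{\rm CM}$ is $-\int_{\cX/S} c_1'(K_{\cX/S}^{-1})^{n+1} = -\int_{\cX/S}\check\omega^{n+1}$; strictly, since $h_{\rm DP}$ is only continuous, this identity holds on $S^\circ$ where everything is smooth, and there, by \eqref{smWP} and \eqref{2ref}, $-\int_{\cX^\circ/S^\circ}\check\omega^{n+1}$ is exactly the Weil–Petersson form $\omega_{\rm WP}^\circ$ (here one uses that $\check\omega|_{\cX_t}$ is the Kähler–Einstein metric and that $\omega_{\cX^\circ}=-\pdJd\log\{\omega_t^n\}$ differs from $\check\omega$ only by a pullback from the base, which drops out of the $(n+1)$-fold fiber integral). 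That proves (2). Third, for (1) I would argue that the local weight $\varphi_\alpha$ of $h_{\rm DP}$ is plurisubharmonic on $S$: it is continuous, it is smooth and psh (indeed equal to a Weil–Petersson potential, which is $\ge 0$) on the dense open $S^\circ$, and near $A$ it is bounded (continuity on the compact-in-$S$ neighborhoods), so the Riemann extension theorem for plurisubharmonic functions, Theorem \ref{GRext}, forces $\varphi_\alpha$ to be psh across $A$; hence $\omega_{\rm WP} = -\pdJd\log h_{\rm DP}$ is a closed positive $(1,1)$-current on $S$ in the sense of Definition \ref{defcur}.

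The main obstacle I expect is the continuity of $h_{\rm DP}$ across the singular locus $A$, i.e. at parameters $t\in S$ where $\cX_t$ acquires singularities: the inductive Deligne formula \eqref{indDP} reduces continuity to continuity of fiber integrals of the shape $\int_{\cX_t}\log\|\widetilde s_n\|^2 \wedge_{i<n} c_1'(K_{\cX/S}^{-1})$ and, after the change-of-metric correction, of $\int_{\cX_t} \tfrac1m\log\rho_m(t)\cdot(\text{smooth forms})$; controlling these as $t\to 0$ is precisely where the partial $C^0$-estimate enters, via Proposition \ref{bdU} (uniform two-sided bound and continuity of $\tfrac1m\log\rho_m$) and the volume-convergence Lemma \ref{kltconv} of the Appendix, which handles the small neighborhood $\mathcal W(\delta)$ of $\cX^{\sing}$ exactly as in the estimate \eqref{decompestimate}. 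So the real content is to combine Theorem \ref{Zhang} (continuity for the \emph{smooth}-metric Deligne pairing) with the Proposition \ref{bdU} bounds to upgrade to continuity of $h_{\rm DP}$; once that is in hand, properties (1) and (2) follow cleanly from \eqref{DPcurv}, \eqref{smWP}, \eqref{comform} and Theorem \ref{GRext} as above.
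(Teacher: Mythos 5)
Your proposal follows exactly the paper's architecture (continuous Deligne metric from the smooth FS-type reference metric $\tO$ via Theorem \ref{Zhang}, corrected by a bounded continuous potential via the change-of-metric formula \eqref{comform}, then compared with $\omega_{\rm WP}^\circ$ using \eqref{DPcurv} and \eqref{smWP}, and finally extended across $A$ using Theorem \ref{GRext}/Lemma \ref{context}). However, the change-of-metric potential you chose is not the right one, and this breaks the curvature comparison in part (2).

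You correct $\widetilde h_{\rm DP}$ by $\phi=\tfrac1m\log\rho_m$. But the metric you then obtain on $K^{-1}_{\cX/S}$ is $e^{\phi}\tO$, not the Kähler--Einstein volume form $\{\omega_t^n\}$. The correct relation is $\omega_t^n=e^{-\tku}\,\tO_{\cX_t}$ (equation \eqref{KEeq2b}), so the required change-of-metric potential is $-\tku$ and the paper sets $h_{\rm DP}=\widetilde h_{\rm DP}\cdot e^{-\kU}$ with $\kU=-\sum_j\int_{\cX/S}\tku\,\widetilde\omega^j\wedge\omega^{n-j}$. Now $\tku$ and your $-\tfrac1m\log\rho_m$ differ by $f=\log(\Omega_{\cX_t}/\tO_{\cX_t})$ (plus a constant in the fiber), and $f$ is a fiberwise nonconstant function; it encodes the change of basis $A(t)^{-1/2}$ from the fixed holomorphic frame $\{\widetilde s_i\}$ to the $L^2$-orthonormal frame $\{s_i\}$. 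Consequently your claim that $\check\omega|_{\cX_t}$ is the KE metric, and that $\omega_{\cX}$ and $\check\omega$ differ only by a pullback from the base, are both false: $\check\omega_t$ is the Bergman metric, it equals $\omega_t+\pdJd(\tfrac1m\log\rho_m)$, and $\omega_{\cX}-\check\omega=\pdJd\ku$ with $\ku$ nonconstant on fibers. The fiber integral $-\int_{\cX^\circ/S^\circ}(\cdot)^{n+1}$ is sensitive to this discrepancy, so with your potential the curvature on $S^\circ$ would \emph{not} equal $\omega_{\rm WP}^\circ$, and part (2) fails. The continuity of $f(t)$ (equivalently of $A(t)^{-1/2}$) is in fact part of what Proposition \ref{bdU} establishes --- it is needed precisely so that the correct potential $\tku$ is continuous and bounded --- and it is not something that comes for free from Zhang's theorem alone.

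With $\phi$ replaced by $-\tku$ everything you wrote goes through verbatim: continuity from Zhang plus Proposition \ref{bdU} and Lemma \ref{bdPsi}/Lemma \ref{kltconv}, identification with $\omega_{\rm WP}^\circ$ on $S^\circ$ from the expansion $\omega_{\cX^\circ}^{n+1}=\widetilde\omega^{n+1}+\pdJd\left(\tku\sum_j\widetilde\omega^j\wedge\omega^{n-j}\right)$ combined with \eqref{smWP} and \eqref{KrefS}, and the extension across $A$ via Theorem \ref{GRext}. So this is a localized error in identifying the potential, not a flaw in the strategy; but as written the proof of (2) does not close.
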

\begin{proof}[Proof of Theorem \ref{exthDP}]
Recall that $\tO=\{\tO_{\cX_t}\}$ defines a smooth metric on $K_{\cX/S}^{-1}$ which is nothing but the pull back of the Fubini-Study metric {by} a holomorphic family of embeddings (see Section \ref{bypartialC0}). By Theorem \ref{Zhang}, the associated metric $\widetilde{h}_{\rm DP}$ on $\lambda_{\rm CM}=-\left(K_{\cX/S}^{-1}\right)^{\la n+1\ra}$ is continuous.
From now on we choose an open covering $\{\mathcal{U}_\alpha\}$ of $S$ and generators $l_\alpha$ of $\mathcal{O}_{S}(\lambda_{\CM})(\mathcal{U}_\alpha)$, {such that}
\[
\|l_\alpha\|_{\widetilde{h}_{\rm DP}}^2=e^{-\widetilde{\Psi}_\alpha}{,}
\]
where $\widetilde{\Psi}_\alpha$ is a continuous function on $\mathcal{U}_\alpha$.
Moreover, by curvature formula in \eqref{DPcurv} we have:
\begin{equation}\label{KrefS}
\pdJd\widetilde{\Psi}_\alpha=-\int_{\cX/S} \widetilde{\omega}^{n+1}=:\widetilde{\omega}_S.
\end{equation}
Notice that $\widetilde{\omega}_S$ is however not known to be a positive K\"{a}hler form. Next we bring in the K\"{a}hler-Einstein metrics $\{\omega_t\}$. By \eqref{KEeq2} we have:
\begin{equation}\label{KEeq2b}
\omega_t^n=e^{-\tku}\widetilde{\Omega}_{\cX_t}.
\end{equation}
$\{\omega_t^n\}$ (resp. $\tO$) defines a continuous (resp. smooth) metric on $K_{\cX/S}^{-1}$. So by the {\em change of metric formula} in \eqref{comform}, we define:
\begin{equation}\label{KEhDP}
h_{\rm DP}=\widetilde{h}_{\rm DP}\cdot e^{-\kU}
\end{equation}
where
\begin{equation}\label{Psi}
\kU=-\sum_{j=1}^n\int_{\cX/S}\tku\;\widetilde{\omega}_t^j\wedge \omega_t^{n-j}.
\end{equation}
Over each fibre $\cX_t$, $\widetilde{\omega}_t$ is a smooth K\"{a}hler metric (pull back of Fubini-Study) and $\omega_t=\widetilde{\omega}_t+\pdJd\tku$ is positive current with continuous potentials. So by pluripotential theory $\kU$ is a well defined function on $S$.   Moreover, by Lemma \ref{bdU}, we know that $\tku$ is continuous and uniformly bounded on $\cX$. Then we can show that $\kU$ in \eqref{Psi} is continuous and uniformly bounded by the following Lemma \ref{bdPsi} (see \cite{Li13, SSY}).  So we conclude that $h_{\rm DP}$ is a continuous Hermitian metric
on $\lambda_{\CM}$.

Next we look at these data at the curvature level. By taking the curvature on both sides of \eqref{KEeq2b}, we have:
\begin{equation}\label{otou}
\omega_{\cX}=\pdJd \log\{\omega_t^n\}=\widetilde{\omega}+\pdJd \tku.
\end{equation}
From \eqref{otou} we see that on $\cX^{\circ}$ the following identity holds:
\begin{eqnarray*}
\omega_{\cX^{\circ}}^{n+1}&=& \widetilde{\omega}^{n+1}+\pdJd \left(\tku\sum_{j=0}^{n} \widetilde{\omega}^{j}\wedge \left(\widetilde{\omega}+\pdJd \tku\right)^{n-j}\right),
\end{eqnarray*}
Since over $\cX^{\circ}$ both $\omega_{\cX^{\circ}}$ and $\widetilde{\omega}$ are smooth (1,1)-forms and $\tku$ is a smooth function, we can do { fibre integrals} to get:
\begin{equation}\label{decomp}
-\int_{\cX^\circ/S^\circ} \omega_{\cX^\circ}^{n+1}=
-\int_{\cX^{\circ}/S^{\circ}}\widetilde{\omega}^{n+1}+\pdJd \kU,
\end{equation}
where $\kU$ was defined in \eqref{Psi}. By \eqref{smWP} and \eqref{KrefS}, we see that the above identity is equivalent to:
\[
\omega_{\rm WP}^\circ=-\pdJd \log \widetilde{h}_{\rm DP}+\pdJd\kU={-\left.\pdJd\log h_{\rm DP}\right|_{S^{\circ}}}.
\]
Note that $\omega_{\rm WP}^{\circ}$ is a {positive form} over $S^\circ$. Now the conclusion of the Theorem follows from Lemma \ref{context}.
\end{proof}
\begin{rem}
One may would like to try to extend $\omega_{\rm WP}^\circ$ to $S$ by defining
\begin{equation}\label{trydef}
\omega_{\rm WP}=-\int_{\cX/S} \omega_{\cX}^{n+1}
\end{equation}
The problem is that we need to verify that this $\omega_{\rm WP}$ in \eqref{trydef} is well-defined. There are several technical difficulties. For one thing, $\omega_{\cX}$ can not be
a positive current because we expect $\omega_{\rm WP}$ to be positive. So this would prevent us to define the ``Monge-Amp\`{e}re measure" $\omega_{\cX}^{n+1}$. Even we
could define this wedge product, we still need to make sense of the fibre integral, for which the work in \cite{Var89} may be helpful. To get around these difficulties, we proved the above result by combining the continuity and the uniqueness of plurisubharmonic extension. 
\end{rem}

\begin{lem}\label{bdPsi}
The function $\kU$ is continuous and uniformly bounded on $S$.
\end{lem}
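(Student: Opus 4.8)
The plan is to establish the continuity and uniform boundedness of $\kU$ by treating the fibre integral in \eqref{Psi} as an integral of the continuous, uniformly bounded function $\tku$ against a family of mixed Monge-Amp\`ere currents with bounded mass, and then controlling how this family varies with $t$, especially as $t$ approaches the singular locus $A$. Concretely, since $\widetilde\omega_t$ is a smooth K\"ahler metric in a fixed cohomology class (the pull-back of $\tfrac1m\omega_{\rm FS}$, so with $\int_{\cX_t}\widetilde\omega_t^n=\mathrm V$ independent of $t$) and $\omega_t=\widetilde\omega_t+\pdJd\tku$ with $\tku$ continuous on $\cX_t$, each wedge product $\widetilde\omega_t^j\wedge\omega_t^{n-j}$ is a well-defined positive measure on $\cX_t$ by Bedford-Taylor pluripotential theory, with total mass $\int_{\cX_t}\widetilde\omega_t^j\wedge\omega_t^{n-j}=\mathrm V$ for every $j$ (the classes agree). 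Hence
\[
|\kU(t)|\le \sum_{j=1}^n \int_{\cX_t}|\tku|\,\widetilde\omega_t^j\wedge\omega_t^{n-j}\le n\,\mathrm V\cdot \sup_{\cX}|\tku|,
\]
and the right-hand side is finite and independent of $t$ by Proposition \ref{bdU}. This gives the uniform bound immediately.

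For continuity, the first step is to note that on the open locus $S^{\circ}$ all the data ($\omega_t$, $\widetilde\omega_t$, $\tku$) vary smoothly in $t$, so $\kU$ is smooth, hence continuous, there; the issue is continuity at points $t_0\in A$. I would argue exactly as in \cite{Li13, SSY}: fix $t_0\in A$ and a sequence $t_i\to t_0$. Split each fibre integral as an integral over $\cX_{t}\setminus\mathcal W(\delta)$ plus an integral over $\cX_{t}\cap\mathcal W(\delta)$, where $\mathcal W(\delta)$ is a shrinking analytic neighborhood of $\cX^{\sing}$, just as in \eqref{decompestimate}. On the complement of $\mathcal W(\delta)$ the family is smooth and $\tku$ is continuous (by Proposition \ref{bdU}), so the corresponding piece of $\kU(t_i)$ converges to that of $\kU(t_0)$ for fixed $\delta$; on $\mathcal W(\delta)$ we bound $|\tku|$ by its uniform sup and are left with the mass $\int_{\cX_{t}\cap\mathcal W(\delta)}\widetilde\omega_t^j\wedge\omega_t^{n-j}$, which we must show is small uniformly in $t$ for $\delta$ small. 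Since $\widetilde\omega_t$ is the restriction of a fixed smooth form on ambient $\mathbb P^{N_m-1}$, the $\widetilde\omega_t^n$-mass of $\cX_t\cap\mathcal W(\delta)$ tends to $0$ as $\delta\to 0$ uniformly in $t$ near $t_0$ (continuity of the cycles $\cX_t$, Lemma \ref{unique}); and the mixed masses with $\omega_t=\widetilde\omega_t+\pdJd\tku$ are controlled by this pure mass together with $\|\tku\|_{L^\infty}$ via the Chern-Levine-Nirenberg inequality, or alternatively by the convergence Lemma \ref{kltconv} applied to $\widetilde\Omega_{\cX_t}$ together with comparison of the volume forms. Letting first $i\to\infty$ and then $\delta\to 0$ yields $\kU(t_i)\to\kU(t_0)$.

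The main obstacle I anticipate is the uniform-in-$t$ smallness of the mixed-mass contribution near $\cX^{\sing}$: unlike the pure reference volume $\widetilde\Omega_{\cX_t}$, for which Lemma \ref{kltconv} gives exactly the needed convergence \eqref{lvolconv}, the measures $\widetilde\omega_t^j\wedge\omega_t^{n-j}$ involve the Bedford-Taylor products of the only-continuous potentials $\tku$, so one needs a pluripotential-theoretic comparison (Chern-Levine-Nirenberg type bound localized near the singularities, using that $\tku$ is globally bounded) to dominate these by the reference volume, uniformly as $t\to t_0$. Once this domination is in place the argument is a routine $\varepsilon/2$ split exactly parallel to \eqref{decompestimate}. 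I would therefore structure the proof as: (i) finite mass and the uniform bound; (ii) smoothness on $S^\circ$; (iii) the two-term decomposition and the easy term; (iv) the CLN/Lemma \ref{kltconv} estimate for the hard term; (v) conclude.
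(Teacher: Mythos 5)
Your step (i) (the uniform bound), step (iii) (the two-region decomposition modeled on \eqref{decompestimate}), and the treatment of the ``easy'' term away from $\mathcal{W}(\delta)$ all match the paper: the paper also uses $\|\tku\|_{L^\infty}<\infty$ from Proposition \ref{bdU} and convergence of Monge-Amp\`ere measures with continuous potentials on local fibrations over $\cX\setminus\mathcal{W}(\delta)$.

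Where you diverge --- and where there is a genuine gap --- is step (iv), the control of $\int_{\cX_t\cap\mathcal{W}(\delta)}\widetilde\omega_t^j\wedge\omega_t^{n-j}$ \emph{uniformly in $t$}. You propose to dominate this mixed Bedford--Taylor mass by the pure $\widetilde\omega_t^n$-mass of $\cX_t\cap\mathcal{W}(\delta)$ via a Chern--Levine--Nirenberg argument, or by a comparison to the reference volume $\tO_{\cX_t}$ so that Lemma \ref{kltconv} can be applied. Neither of these is available as stated: CLN bounds the Monge--Amp\`ere mass on a fixed compact in terms of $\|\tku\|_\infty$ and geometric data of an ambient annulus, but it does not propagate smallness of the pure mass on a shrinking set $\mathcal{W}(\delta)$ to smallness of the mixed mass; and Lemma \ref{kltconv} concerns the volume form $\tO_{\cX_t}=\bigl(\sum_i|\widetilde s_i(t)|^2\bigr)^{-1/m}$, which is a different measure from $\widetilde\omega_t^j\wedge\omega_t^{n-j}$, with no domination between them proved or claimed anywhere in the paper. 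So the ``pluripotential-theoretic comparison'' you acknowledge needing is a real missing lemma, not a routine step.

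The paper avoids this entirely with a subtraction trick (display \eqref{conv3}): write
$\int_{\cX_t\cap\mathcal{W}(\delta)}=\int_{\cX_t}-\int_{\cX_t\setminus\mathcal{W}(\delta)}$, observe that $\int_{\cX_t}\widetilde\omega_t^j\wedge\omega_t^{n-j}=(-K_{\cX_t})^n$ is a topological constant independent of $t$, and note that $\int_{\cX_t\setminus\mathcal{W}(\delta)}$ converges to $\int_{\cX_0\setminus\mathcal{W}(\delta)}$ by the same argument used for the ``easy'' term. Hence the mass near the singularities converges to $\int_{\cX_0\cap\mathcal{W}(\delta)}\widetilde\omega_0^j\wedge\omega_0^{n-j}$, which is then made small by choosing $\delta$ small because $\cX_0^{\sing}$ is pluripolar (display \eqref{conv2}). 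This converts your problematic uniform-in-$t$ estimate into a single pointwise estimate at $t=0$, and no CLN-type comparison is needed. I recommend replacing your step (iv) with this argument; the rest of your structure can stay.
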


\begin{proof}
The argument to prove this was known by \cite{Li13} (see also \cite{Be12} and \cite{SSY}). For the reader's convenience, we briefly sketch the proof and refer to \cite{Li13} and \cite{SSY} for  more details.
We can estimate in the way similar to \eqref{decompestimate},
\begin{eqnarray*}
\left|\int_{\mX_t} \tku \widetilde{\omega}_t^j\wedge \omega_t^{n-j}-\int_{\mX_0}\tku \widetilde{\omega}_0^j\wedge \omega_0^{n-j}\right|&\le& \left|\int_{\mX_t\backslash\mathcal{W}(\delta)}\tku \widetilde{\omega}_t^j\wedge\omega_t^{n-j}
-\int_{\mX_0\backslash\mathcal{W}(\delta)}\tku \widetilde{\omega}_0^j\wedge\omega_0^{n-j}\right|+ \\
&&\|\tku\|_{L^\infty} \left(\int_{\mX_t\cap\mathcal{W}(\delta)}\widetilde{\omega}_t^j\wedge \omega_t^{n-j}
+\int_{\mX_0\cap \mathcal{W}(\delta)}\widetilde{\omega}_0^j\wedge \omega_0^{n-j}\right)\nonumber.
\end{eqnarray*}
As in \eqref{decompestimate}, $\mathcal{W}(\delta)$ is a sufficiently small neighborhood of $\cX^{\sing}$.
$\|\tku\|_{L^\infty}$ is finite because of Proposition \ref{bdU}.

Now to estimate the first term, we can choose a partition of unity $\{\mathcal{V}_\alpha, \rho_\alpha\}$ of $\cX\setminus \mathcal{W}(\delta)$ such that $\pi: \mathcal{V}_\alpha\rightarrow S$ is a local
fibration. Note that both $\widetilde{\omega}_t$ and $\omega_t=\widetilde{\omega}_t+\pdJd\tku|_{\cX_t}$ are positive currents with {\em continuous} potentials, and $\tku$ is continuous by Lemma \ref{bdU}. So by convergence of Monge-Amp\`{e}re measures (see \cite[Corollary 3.6, Chapter 3]{Dem}), we can show that
\begin{equation}\label{conv1}
\lim_{t\rightarrow 0}\int_{\mV_\alpha\cap \cX_t}\rho_\alpha \tku\; \widetilde{\omega}_t^j\wedge \omega_t^{n-j}=\int_{\mV_\alpha\cap \cX_0}\rho_\alpha \tku\; \widetilde{\omega}_0^j\wedge \omega_0^{n-j}.
\end{equation}
By patching together the convergence \eqref{conv1} on all $\mV_\alpha$, we see that the first term approaches $0$ as $t\rightarrow 0$ {for any fixed $\delta$}.
Note that when we write $t\rightarrow 0$, we mean the limit holds for any sequence $t_i\rightarrow 0$.
Because $\cX^{\sing}$ is a pluripolar set, it's immediate that for any $\epsilon>0$, there exists $0<\delta\ll 1$ such that
\begin{equation}\label{conv2}
\int_{{\cX_0\cap \mathcal{W}(\delta)}}\widetilde{\omega}_0^j\wedge\omega_0^{n-j}\le \epsilon
\end{equation}
Lastly to estimate the first term in the bracket, we {can} use the following trick (\Red{see \cite{SSY} and \cite{Li13}}):
\begin{equation}\label{conv3}
\int_{\cX_t\cap \mathcal{W}(\delta)}=\left(\int_{\cX_t}-\int_{\cX_t\setminus\mathcal{W}(\delta)}\right)\Red{.}
\end{equation}
Now $\int_{\cX_t}\widetilde{\omega}_t^j\wedge \omega_t^{n-j}={ K_{\cX_t}^{-n} = K_{\cX_0}^{-n}}=\int_{\cX_0}\widetilde{\omega}_0^j\wedge\omega_0^{n-j}$ is a constant independent of $t$ and
\begin{equation}\label{conv4}
\lim_{t\rightarrow 0}\int_{\cX_t\setminus \mathcal{W}(\delta)}\widetilde{\omega}_t^j\wedge\omega_t^{n-j}=\int_{\cX_0\setminus\mathcal{W}(\delta)}\widetilde{\omega}_0^j\wedge \omega_0^{n-j}
\end{equation}
using the similar reasoning in \eqref{conv1}. Then by combining \eqref{conv1}-\eqref{conv4}, we show that the first term in the bracket can indeed be made arbitrarily small as long as $t$ and $\delta$ are sufficiently
small. So we are done.

\end{proof}

\begin{lem}\label{context}
Let $S$ be a smooth complex manifold, and $(L, h)\rightarrow S$ be a line bundle with a continuous Hermitian metric $h$. Let $A\subset S$ be a proper subvariety. Denote by {$c_1(L, h)$ the curvature current of $h$. If $c_1(L,h)^\circ:=c_1(L,h)|_{S^\circ}$ is a positive current over $S^\circ=S\setminus A$, then $c_1(L,h)$ is a positive current over the whole $S$.}
\end{lem}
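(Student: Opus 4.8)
The plan is to reduce the assertion to a purely local statement about plurisubharmonic potentials and then invoke the Riemann extension theorem for plurisubharmonic functions, Theorem \ref{GRext}, which was recalled earlier precisely for this sort of application. First I would fix a trivializing open cover $\{U_\alpha\}$ of $S$ with generators $l_\alpha$ of $L$ over $U_\alpha$, and write the continuous metric $h$ in the form $\|l_\alpha\|_h^2=e^{-\psi_\alpha}$ with $\psi_\alpha$ a continuous function on $U_\alpha$. Then $c_1(L,h)$ is by construction $\pdJd\psi_\alpha$ on $U_\alpha$, and by the globalization discussion preceding Section \ref{bypartialC0} the statement ``$c_1(L,h)$ is a positive current on $S$'' is exactly the statement that each $\psi_\alpha$ is plurisubharmonic on $U_\alpha$ (in the sense of Definition \ref{defpsh}). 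So it suffices to prove this for each $\alpha$.

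Next I would use the hypothesis. Restricting to $S^\circ=S\setminus A$, positivity of $c_1(L,h)^\circ$ means precisely that $\psi_\alpha^\circ:=\psi_\alpha|_{U_\alpha\cap S^\circ}$ is plurisubharmonic on $U_\alpha\cap S^\circ$. Since $A$ is a proper (hence nowhere dense) analytic subset of the complex manifold $S$, and $\psi_\alpha$ is continuous on all of $U_\alpha$, the function $\psi_\alpha^\circ$ is in particular bounded above on a neighborhood of every point of $A\cap U_\alpha$. Hence Theorem \ref{GRext} applies and produces a plurisubharmonic function $\bar\psi_\alpha$ on $U_\alpha$ extending $\psi_\alpha^\circ$.

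The final step is to identify $\bar\psi_\alpha$ with $\psi_\alpha$. The plurisubharmonic extension across a thin set is unique, and at a point $x\in A\cap U_\alpha$ it is given by $\bar\psi_\alpha(x)=\limsup_{y\to x,\,y\notin A}\psi_\alpha^\circ(y)$ --- this is the explicit formula \eqref{shexp} in the one-dimensional slice situation, and follows in general from the uniqueness in Theorem \ref{GRext} together with restriction to generic discs through $x$. Because $\psi_\alpha$ is continuous on $U_\alpha$ and $A\cap U_\alpha$ is nowhere dense, this $\limsup$ equals $\psi_\alpha(x)$; and on $U_\alpha\setminus A$ we already have $\bar\psi_\alpha=\psi_\alpha^\circ=\psi_\alpha$. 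Therefore $\bar\psi_\alpha=\psi_\alpha$ on all of $U_\alpha$, so $\psi_\alpha$ is plurisubharmonic, and gluing over $\alpha$ finishes the proof. The only part of the argument that is not a mechanical citation is this last identification, and it is the one place where \emph{continuity} of $h$ (as opposed to mere local upper boundedness) is genuinely used: with only upper boundedness one would still obtain the extension $\bar\psi_\alpha$, but not its equality with the original potential. This is exactly why $h_{\mathrm{DP}}$ was arranged to be continuous in Theorem \ref{exthDP}. (An alternative route would be to extend the positive current $c_1(L,h)^\circ$ across $A$ by the Skoda--El Mir theorem and then check that the extension agrees with $\pdJd\psi_\alpha$ off a set of measure zero, hence everywhere; but the potential-theoretic argument above stays closest to the tools set up in Section \ref{secPSH}.)
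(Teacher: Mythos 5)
Your proposal is correct and follows essentially the same route as the paper's proof: both pass to local potentials $\psi_\alpha$, use continuity to obtain local upper boundedness near $A$, invoke Theorem \ref{GRext} to extend $\psi_\alpha^\circ$ as a plurisubharmonic function, and then identify this extension with $\psi_\alpha$ by restricting to analytic discs and using the explicit formula \eqref{shexp} from Theorem \ref{shext} together with continuity. You merely spell out the final identification step a bit more fully than the paper does.
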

\begin{proof}
 Choose a covering $\{\mathcal{U}_\alpha\}$ of $S$.
The metric $h$ is locally represented by a positive function $\exp(-\Psi_\alpha)$ over $\mathcal{U}_\alpha$. From the assumption, we know that $\Psi_\alpha$ is continuous (in particular locally uniformly bounded) on $\mathcal{U}_\alpha$ and plurisubharmonic on $\mathcal{U}_\alpha\setminus S$. By Theorem \ref{GRext} (see also \cite[Theorem 5.23, 5.24, Chapter 1]{Dem}) we know that $\Psi_\alpha^{\circ}:=\Psi_\alpha|_{\mathcal{U}_\alpha\setminus A}$ extends {\em uniquely} across $A$ to become a plurisubharmonic function on $\mathcal{U}_\alpha$. Because $\Psi_\alpha$ is continuous, this extension must coincide with $\Psi_\alpha$ itself, { as can be seen} by restricting to any analytic curve and using the expression \eqref{shexp} in Theorem \ref{shext}.  So we see that $\Psi_\alpha$ is plurisubharmonic on $\mathcal{U}_\alpha$. { Using the Definition \ref{defcur} in Section \ref{secPSH}, we immediately transform this into the statement of the lemma.}
\end{proof}

\begin{prop}\label{GE}
$CM$-line bundle $\lambda_{\rm CM}$ is nef over $S$. Moreover, if $S$ is proper such that $\cX^{\circ}\rightarrow S^{\circ}$ is a {\em generically effective} family, then $\int_{S}c_1(\Lambda_{\rm CM})^{\dim S}>0$.
\end{prop}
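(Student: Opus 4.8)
The plan is to deduce both assertions from the continuous Hermitian metric $h_{\rm DP}$ on $\lambda_{\rm CM}$ constructed in Theorem \ref{exthDP}, whose curvature current $\omega_{\rm WP}$ is positive on all of $S$ and restricts to the Weil-Petersson form $\omega_{\rm WP}^\circ$ on $S^\circ$. For nefness: given any proper curve $C\subset S$, I would pull back $(\lambda_{\rm CM}, h_{\rm DP})$ to the normalization $\nu\colon \widetilde{C}\to C$. The pulled-back metric is still continuous and its curvature current $\nu^*\omega_{\rm WP}$ is still a positive current on the smooth curve $\widetilde{C}$ (positivity of currents is preserved under pullback by the holomorphic map $\nu$, using Definition \ref{defcur} and the plurisubharmonicity criterion of Section \ref{secPSH}). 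Hence $\lambda_{\rm CM}\cdot C = \deg_{\widetilde{C}}\nu^*\lambda_{\rm CM} = \int_{\widetilde{C}} \nu^*\omega_{\rm WP}\ge 0$, since the degree of a line bundle with a continuous metric of positive curvature current on a smooth projective curve is non-negative (the mass of the curvature current computes the degree). As $C$ was an arbitrary curve, $\lambda_{\rm CM}$ is nef.

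For the bigness-type statement, suppose $S$ is proper and $\cX^\circ\to S^\circ$ is generically effective, so that on a dense Zariski-open $S'\subset S^\circ$ the Kodaira-Spencer map is injective and $\omega_{\rm WP}^\circ$ is a genuine (strictly positive) smooth Kähler form — this is exactly the content recalled in Section \ref{WP-CM}, where $\omega_{\rm WP}^\circ$ is positive definite along effective deformation directions. Then I would compute
\[
\int_S c_1(\Lambda_{\rm CM})^{\dim S} = \int_S \omega_{\rm WP}^{\dim S},
\]
interpreting the left side as the top self-intersection of the nef line bundle $\Lambda_{\rm CM}$ (note that $\lambda_{\rm CM}$ descends to $\Lambda_{\rm CM}$ on $\overline{\mM}$ by Theorem \ref{thmext}; here the relevant base is proper so this is the setup being invoked) and the right side as the mass of the positive current $\omega_{\rm WP}^{\dim S}$. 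Since $\omega_{\rm WP}$ is a closed positive $(1,1)$-current that agrees with the strictly positive smooth form $\omega_{\rm WP}^\circ$ on the dense open set $S'$, its top power dominates $\int_{S'}(\omega_{\rm WP}^\circ)^{\dim S} > 0$ — more precisely, choosing a relatively compact coordinate ball $B\subset S'$ where $\omega_{\rm WP}^\circ \ge \varepsilon\, \omega_{\rm eucl}$ for some $\varepsilon>0$, the Bedford-Taylor monotonicity of Monge-Ampère masses of positive currents gives $\int_S \omega_{\rm WP}^{\dim S} \ge \int_B (\omega_{\rm WP}^\circ)^{\dim S} \ge \varepsilon^{\dim S}\,\mathrm{vol}(B) > 0$.

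The main obstacle I anticipate is making the identification $\int_S c_1(\Lambda_{\rm CM})^{\dim S} = \int_S \omega_{\rm WP}^{\dim S}$ fully rigorous on a possibly singular proper base: one must justify that the cohomological intersection number of the $\QQ$-line bundle $\Lambda_{\rm CM}$ is computed by the mass of the top wedge power of its (merely continuous, not smooth) metric's curvature current, and that Bedford-Taylor intersection theory applies across the singular locus $A$ and across $S^{\rm sing}$. This is handled by working on a resolution, or by the general theory of Monge-Ampère operators for continuous psh metrics (so that $\omega_{\rm WP}^{\dim S}$ is a well-defined positive measure of the correct total mass $= c_1(\Lambda_{\rm CM})^{\dim S}$), together with the fact established in Theorem \ref{exthDP} that $h_{\rm DP}$ is continuous — continuity is precisely what allows the mass to equal the intersection number and what makes the restriction $\omega_{\rm WP}|_{S^\circ} = \omega_{\rm WP}^\circ$ legitimate for the lower-bound step. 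The remaining verifications (preservation of positivity of currents under pullback; degree of a positively-curved continuously-metrized line bundle on a curve) are standard and I would only cite them.
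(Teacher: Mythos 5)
Your argument is essentially the paper's own: nefness by intersecting with curves and using that the continuous metric's positive curvature current has well-defined non-negative mass, and positivity of the top self-intersection by localizing to a dense open set where $\omega_{\rm WP}^\circ$ is a strictly positive K\"ahler form (generic effectiveness) and invoking monotonicity of Monge--Amp\`ere masses. The paper states this in three lines and leaves the Bedford--Taylor and resolution details implicit; you have simply filled them in.
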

\begin{proof}
$\lambda_{\rm CM}\cdot C=\int_{C}\omega_{\rm WP}\ge 0$ for any curve $C\subset S$. Because $\omega_{\rm WP}$ has continuous bounded
potentials, the Monge-Amp\`{e}re measure and hence the integral is well defined. The last statement is true because $\omega_{\rm WP}|_{S^{\circ}}=\omega_{\rm WP}^{\circ}$ is positive definite along effective directions.
\end{proof}

We conclude this section with the following remark.
\begin{rem}\label{normal}
The results in this section still hold true when we just assume $S$ is a normal complex space. To see this, we choose a resolution of singularities $f: \widetilde{S}\rightarrow S$ and consider the induced family of K-polystable Fano varieties $\widetilde{\cX}:=\cX\times_S\widetilde{S}$. Then we can carry out the above constructions for the new family $\widetilde{\cX}\rightarrow \widetilde{S}$. Because CM line bundle is functorial, we have $f^*\lambda_{\CM}(S)=\lambda_{\CM}(\widetilde{S})$. For the data of positively curved metrics, because the fiber of $f$ is connected by Zariski's main theorem, it's easy to see that the data obtained over $\widetilde{S}$ naturally descends to $S$.
\end{rem}

\section{Descending  CM line bundle and Deligne metrics to the Moduli space}\label{s-descend}
In this section, we will construct the descendent of CM line bundle on $\overline\cM$, the proper moduli space constructed in \cite{LWX}. Following \cite[Section 8]{LWX}, let us introduce the following parameter space with one minor difference, that is, we will work with  Hilbert schemes instead of Chow variety.
\begin{defn}\label{Z0}

\begin{equation}\label{Z}
\Red{Z}:=\left \{\hilb(Y)\left | \mbox{\stackbox[v][m]{{\footnotesize
$Y\subset \PP^{N_m-1}$ \Red{is} a smooth Fano manifold with \\
\Red{$\dim H^0(K_Y^{\otimes t})=\chi(t), \forall t\gg 1$} and  $\left.\sO_{\PP^{N_m-1}}(1)\right|_Y\cong K_Y^{-\otimes m}$.
}}}\right .\right \}\subset\hilb(\PP^{N_m-1},\chi)\ .
\end{equation}	

By the boundedness of smooth Fano manifolds with fixed dimension (see \cite{KMM92}), we may choose $m\gg1$ such that  $\chi(m)=N_m$ and $Z$ includes all such Fano manifolds. Now following \cite[Section 8]{LWX}, let $\overline{Z}\subset \hilb(\PP^{N_m-1},\chi)$ be the closure of $Z$ inside $ \hilb(\PP^{N_m-1},\chi)$, $Z^{\kss}$ (resp. $Z^\kps \subset Z^\kss$ ) be the {\em open } (resp.  {\em constructible}) subset of $\overline{Z}$  parametrizing the {\em K-semistable } (resp. {\em K-polystable}) $\mathbb{Q}$-Fano subvarieties, and $(Z^{\kps})^{\circ} (\mbox{ resp. } (Z^{\kss})^{\circ}) \subset Z^{\kps}$  be the subset parametrizing smooth K-polystable {\em Fano manifolds}.	
Let $Z^\ast$ denote  the seminormalization of $Z^\kss_{\rm red}$, the reduction of $Z^\kss$ and  $(Z^{\rm kps})^\ast=Z^\kps\times_{Z^\kss} Z^\ast$  denote   the pull back of $Z^\kps$.
\end{defn}


Let  $\cX\to Z $ be the universal family over $Z$ and by abusing of notation we will still let $\cX \to Z^\ast$ denote the pull back. 
Now for each $z\in Z^{\kps}$, the corresponding $\cX_z$ is equipped with a weak K\"ahler-Einstein metric $\omega_z$. Its volume form $\omega_z^n=e^{-\tku}\tO$ defines a continuous Hermitian metric on $K_{\cX_z}^{-1}$, hence a Hermitian metric $h_{\rm DP}(z)$ on $\lam_\CM|_{z}$ by \Red{\eqref{KEhDP}.} 
Now our main result of this section is the following equivalent version of Theorem \ref{thmext}.
\begin{thm}\label{descend}
Let $\overline\cM$ be the proper good moduli space (cf. \cite[Theorem 1.3]{LWX}) for the quotient stack $[Z^\ast/SL(N+1)]$. Then there is a $k=k(r,\chi)$ such that the bundle $\lam_\CM^{\otimes k}\to Z^\kss$  descends to a  $\QQ$-line bundle $\Lam_\CM$ on $\overline\cM$ with a well defined continuous metric $h_{\rm DP}$, whose curvature is a positive current.
\end{thm}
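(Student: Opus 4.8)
The plan is to reduce the descent of $\lambda_{\rm CM}$ to a local GIT computation over Luna slices, resolve it with Kempf's descent lemma once one knows the fibrewise automorphism groups act trivially on a fixed power of the CM fibre, and then carry the continuous metric of Theorem~\ref{exthDP} down to $\overline{\mM}$ using the functoriality of the metrized Deligne pairing.

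First I would invoke the local structure of the good moduli space $q\colon Z^{\ast}\to\overline{\mM}$ established in \cite{LWX}: cover $\overline{\mM}$ by \'etale charts $[W/G]\to\overline{\mM}$ with $W\subset Z^{\ast}$ an affine slice through a point $w_0$ whose fibre $X_0:=\cX_{w_0}$ is K-polystable and $G=\Aut(X_0)$ reductive (by Matsushima, since $X_0$ is KE). On $W$ the pulled-back family carries the $G$-linearized line bundle $\lambda_{\rm CM}(W)$, the linearization being supplied by the equivariance in Lemma~\ref{isom}. By Kempf's descent lemma \cite{DN89}, in its good-moduli-space form, $\lambda_{\rm CM}(W)^{\otimes k}$ descends to $W/\!\!/G$ exactly when for every $w\in W$ with \emph{closed} $G$-orbit the stabilizer $G_w$ acts trivially on $(\lambda_{\rm CM}^{\otimes k})|_w$. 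Such $w$ parametrize K-polystable $\QQ$-Fano varieties $X_w$ with $G_w\cong\Aut(X_w)$ up to the central $\GG_m$ (normalized away by working inside $SL(N_m)$), and $\Aut(X_w)$ acts on the line $\lambda_{\rm CM}|_w=-\la K_{X_w}^{-1},\dots,K_{X_w}^{-1}\ra$ through a character $\chi_w$. For a one-parameter subgroup $\rho\colon\GG_m\to\Aut(X_w)^{\circ}$ the weight of $\chi_w\circ\rho$ equals, up to a universal positive constant, the Donaldson--Futaki invariant of the product test configuration generated by $\rho$, i.e. the classical Futaki invariant of $X_w$, which vanishes because $X_w$ is KE. Hence $\chi_w$ is trivial on $\Aut(X_w)^{\circ}$ and factors through the finite component group $\pi_0(\Aut(X_w))$. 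Boundedness of $\overline{Z}$ gives a uniform bound on the orders of these finite characters; taking $k=k(r,\chi)$ to be this bound makes every $\chi_w^{\otimes k}$ trivial, so $\lambda_{\rm CM}(W)^{\otimes k}$ descends to $W/\!\!/G$, and functoriality of $\lambda_{\rm CM}$ makes these local descents glue along the \'etale cover into a $\QQ$-line bundle $\Lambda_{\rm CM}$ on $\overline{\mM}$ with $q^{\ast}(k\Lambda_{\rm CM})=\lambda_{\rm CM}^{\otimes k}$ over $Z^{\ast}$, hence over $Z^{\kss}$.

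Next I would descend the metric. Over each chart $W$ (equivalently, over the locus $Z^{\kps}$ carrying weak KE metrics), Theorem~\ref{exthDP} together with Remark~\ref{normal} produces a continuous Hermitian metric $h_{\rm DP}$ on $\lambda_{\rm CM}(W)$ whose curvature is a positive current and which restricts to $\omega_{\rm WP}^{\circ}$ on the smooth KE locus. Since $h_{\rm DP}$ is built purely from the canonical (automorphism-equivariant) data of the fibrewise KE metrics through the metrized Deligne pairing, Lemma~\ref{isom} shows that $h_{\rm DP}$ is $G$-invariant; equivalently, its local potentials $\Psi_\alpha$ are $G$-invariant continuous functions on $W$. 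A $G$-invariant continuous function on $W$ is constant on closures of orbits, since $f(\lim g_i\cdot w)=f(w)$, hence constant along the fibres of the good moduli map, and therefore descends to a continuous function on $W/\!\!/G$, which carries the quotient topology. Applying this to the potentials of $h_{\rm DP}^{\otimes k}$, and checking chart-independence by uniqueness of the descent, produces a well-defined continuous metric on $\Lambda_{\rm CM}$ over $\overline{\mM}$, still denoted $h_{\rm DP}$.

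Finally, for positivity of the curvature: on the open locus $\mM\subset\overline{\mM}$ parametrizing smooth KE Fano manifolds the curvature of $h_{\rm DP}$ is the positive form $\omega_{\rm WP}^{\circ}$, so each local potential $\Psi_\alpha$ on a chart $W/\!\!/G$ is continuous and plurisubharmonic off the proper subvariety parametrizing singular or non-KE fibres. By the Riemann extension theorem for plurisubharmonic functions (Theorem~\ref{GRext}), combined with continuity and the explicit one-variable formula \eqref{shexp} of Theorem~\ref{shext} along curves, $\Psi_\alpha$ is plurisubharmonic on all of $W/\!\!/G$; thus, in the sense of Definition~\ref{defcur}, $\omega_{\rm WP}:=-\pdJd\log h_{\rm DP}$ is a closed positive $(1,1)$-current on $\overline{\mM}$. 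I expect the main obstacle to be the uniform power $k$ in the first step --- proving that the automorphism characters of $\lambda_{\rm CM}$ at K-polystable points are trivial on the identity component (Futaki vanishing) and of uniformly bounded order (boundedness), and making sure that the Luna-slice and Kempf machinery really goes through over the seminormal, possibly non-reduced base $Z^{\kss}$; reconciling the locus $Z^{\kps}$ on which $h_{\rm DP}$ is manufactured with all of $Z^{\kss}$ is a secondary point handled by the invariance-and-continuity argument above.
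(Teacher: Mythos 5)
The line bundle descent matches the paper's approach: local quotient presentations from \cite{LWX}, the Alper/Kempf descent criterion (your Kempf lemma is essentially the paper's Theorem~\ref{good-line}), Futaki vanishing on the identity component of $\Aut(\cX_z)$ at K-polystable points, and a uniform bound on the component groups from boundedness (Lemma~\ref{uni-k}). So far so good, and your worry that \emph{this} is the main obstacle is misplaced --- it is the routine part.

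The genuine gap is in the metric descent, where your argument is not correct as stated. You invoke Theorem~\ref{exthDP} (together with Remark~\ref{normal}) to get a continuous $G$-invariant $h_{\rm DP}$ on the whole slice $W\subset Z^{\kss}$, and then descend it as a continuous $G$-invariant function. But Theorem~\ref{exthDP} is proved for $\pi:\cX\to S$ a family of K\"ahler--Einstein Fano varieties over a smooth (or by Remark~\ref{normal}, normal) base; the slice $W$ contains strictly K-semistable fibres that carry no KE metric, so the theorem does not apply to the family over $W$. In reality $h_{\rm DP}$ is only manufactured pointwise on the constructible locus $Z^{\kps}\cap W$ using the weak KE metric of each polystable fibre, and the well-definedness on $\overline{\mM}$ comes from $\Aut(\cX_z)$-equivariance of the metrized Deligne pairing (Lemma~\ref{isom}). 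The continuity of the resulting metric on $\overline{\mM}$ must then be proved directly: one fixes a sequence $[z_i]\to[z]$, lifts it to $z_i\to z$ in $\cU_z\cap Z^{\kps}$, and has to show $\|\mathfrak{l}\|_{h_{\rm DP}}(z_i)\to\|\mathfrak{l}\|_{h_{\rm DP}}(z)$. Via the change-of-metric formula this reduces to the continuity of the reference Deligne metric (Theorem~\ref{Zhang}) plus the continuity of the normalized Ricci potential $\tku$, which is exactly Proposition~\ref{bdU} --- and that proposition is where the real analytic input lives: the partial $C^0$-estimate / Gromov--Hausdorff convergence of the K\"ahler--Einstein metrics, and the uniform convergence of klt volume integrals established in Lemma~\ref{kltconv} of the appendix. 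Your proposal never invokes these and cannot reach the conclusion without them. Your final positivity step also glosses over a needed reduction: one has to restrict to analytic discs $\tau:\Delta\to\cM$, lift them (after a finite base change) into $(Z^{\kss})^{\circ}$ so that the lift lands in the K-polystable locus, and only then apply the Weil--Petersson positivity of Section~\ref{WP-CM}; the extension across $\overline{\mM}\setminus\cM$ by Theorem~\ref{GRext} is fine once continuity is in hand, which again circles back to the missing convergence argument.
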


Before we descend $\lam_\CM\to Z^\ast$ to $\overline\cM$,  let us recall the theory developed \cite{Alp13} and  \cite{AFSV14}.
\begin{defn}\label{pair-pr}
Let $\cZ$ be an algebraic stack of finite type over $\CC$, and let $z\in \cZ(\CC)$ be a closed point with reductive stabilizer $G_z$. We say $f_z:\cV_z\to \cZ$ is a {\em local quotient presentation around $z$} if
\begin{enumerate}
\item $\cV_z=[\spec A/G_z]$, with $A$ being a finite type $\CC$-algebra.
\item $f_z$ is \'etale and affine.
\item There exists a point $v\in \cV_z$ such that $f_z(v)=z$ and $f_z$ induces isomorphism $G_v\cong G_z$.
\end{enumerate}
We say $\cZ$ {\em admits a local quotient  presentation } if there exists a local quotient presentation around every closed point  $z\in\cZ$.
\end{defn}
Then we have the following
\begin{thm}[Theorem 10.3 in \cite{Alp13} and Theorem 4.1 in \cite{AFSV14}]\label{good-line}
Let  $\cZ$ be an algebraic stack  of finite type over $\CC$,
\begin{enumerate}
\item For every closed point $z\in \cZ$, there is a {local quotient presentation $f_z: \cV_z\to \cZ$ around $z$} such that
\begin{enumerate}[a)]
\item \label{Gz-prsv}$f_z$ is stabilizer preserving at  closed points of $\cV_z$, i.e. for any $v\in \cV_z(\CC)$, $\aut_{\cV_z(\CC)}(v)\to \aut_{\cZ(\CC)}(f(v))$ is an isomorphism.
\item $f_z$ sends closed points to closed points.
\end{enumerate}
\item For any $\CC$-point $z\in \cZ$, the closed substack $\overline{\{z\}}$ admits a good moduli spaces.
\end{enumerate}
Then $\cZ$ admits a good moduli space $M$. Furthermore, if $\cZ$ admits a line bundle $\cL$ such that for any closed point $z\in \cZ(\mathbb{C})$, the stabilizer $G_z$ acts on $\cL|_z$ trivially, then $\sL$ descends to a line bundle $L$ on $M$.
\end{thm}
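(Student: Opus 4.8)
This is the existence criterion for good moduli spaces of Alper \cite{Alp13} together with the refinement in \cite{AFSV14}, and I would follow the standard ``local GIT quotient plus \'etale gluing'' strategy. First I would fix, for each closed point $z\in\cZ$, a local quotient presentation $f_z\colon\cV_z=[\spec A_z/G_z]\to\cZ$ supplied by hypothesis (1). Since $G_z$ is linearly reductive, the tautological morphism $\pi_z\colon\cV_z\to V_z:=\spec(A_z^{G_z})$ is a good moduli space morphism --- the fundamental GIT example. The point of hypotheses (1)(a) and (1)(b) is that $f_z$ is \emph{strongly \'etale}, and the key input I would invoke is Alper's Luna-type \'etale slice theorem: an \'etale representable morphism of quotient stacks that is stabilizer-preserving at closed points and sends closed points to closed points induces, after passing to good moduli spaces, a Cartesian square whose bottom arrow is \'etale. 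Applying this to the two projections of $\cV_{zz'}:=\cV_z\times_\cZ\cV_{z'}$, each $\cV_{zz'}$ acquires a good moduli space $V_{zz'}$ with two \'etale maps, to $V_z$ and to $V_{z'}$. Hypothesis (2) enters here: it guarantees that every point of $\cZ$ has a unique closed point with linearly reductive stabilizer in its closure, so that the local quotients $\pi_z$ really are good moduli space morphisms and the formation of $V_z$ is compatible with the $f_z$.

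Granting this, I would verify that the $\{V_z\}$ together with the \'etale correspondences coming from the $\{V_{zz'}\}$ form an \'etale groupoid; the cocycle condition on triple overlaps $\cV_z\times_\cZ\cV_{z'}\times_\cZ\cV_{z''}$ is checked by pulling everything back there, where it becomes tautological. Its quotient $M$ then exists as an algebraic space of finite type over $\CC$, the $\pi_z$ glue to a morphism $\pi\colon\cZ\to M$, and since being a good moduli space morphism is \'etale-local on the target, $\pi$ is one because \'etale-locally on $M$ it coincides with some $\pi_z$. This yields the good moduli space $M$.

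For the descent of the line bundle I would pull $\cL$ back to each $\cV_z$, getting a $G_z$-equivariant line bundle $\cL_z$ on $\spec A_z$; by (1)(a) and the hypothesis on $\cL$, the stabilizer of every closed point with closed orbit acts trivially on the corresponding fibre of $\cL_z$. Kempf's descent lemma \cite{DN89} then lets $\cL_z$ descend to a line bundle $L_z:=\pi_{z*}\cL_z$ on $V_z$ with $\pi_z^*L_z\cong\cL_z$. On overlaps, the pullbacks of $L_z$ and of $L_{z'}$ to $V_{zz'}$ are each identified with the descent of $\cL|_{\cV_{zz'}}$ (which exists by the same criterion), and uniqueness of descended data gives the cocycle condition on triple overlaps; by \'etale descent of line bundles on algebraic spaces the $L_z$ glue to a line bundle $L$ on $M$ with $\pi^*L\cong\cL$.

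The hard part will be the ``strongly \'etale'' input: that an \'etale stabilizer-preserving closed-point-preserving morphism of quotient stacks induces a Cartesian square on good moduli spaces with \'etale base. This is exactly where both (1)(a) and (1)(b) are indispensable --- without (1)(b) the image of a closed orbit could fail to be closed, so good-moduli-space formation would not commute with $f_z$, while without (1)(a) the descended morphism of quotients could fail to be \'etale --- and it is the substantial theorem one must import from \cite{Alp13,AFSV14}. The only other nonformal ingredient is the hard direction of Kempf's lemma, that triviality of the stabilizer actions on fibres over closed orbits already forces descent, which rests on the fact that invariant sections separate closed orbits.
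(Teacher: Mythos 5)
This statement is not proved in the paper at all: it is imported verbatim as Theorem 10.3 of \cite{Alp13} combined with Theorem 4.1 of \cite{AFSV14}, and the authors only \emph{apply} it (to the stack $[Z^\ast/\SL(N_m)]$ with $\cL=\lam_\CM^{\otimes k}$). So there is no in-paper argument to compare against; the relevant benchmark is the proof in those references, and your outline is a faithful reconstruction of it: local quotient presentations give local good moduli spaces $\spec A_z^{G_z}$ by linear reductivity of $G_z$, conditions (1)(a) and (1)(b) make the $f_z$ strongly \'etale so that Luna's fundamental lemma for stacks produces Cartesian squares with \'etale base maps, the local quotients glue along the \'etale groupoid of overlaps, and the line bundle descends chart-by-chart via Kempf's lemma \cite{DN89} and then glues by \'etale descent --- which is also exactly the mechanism the paper alludes to in the remark following the theorem. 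One small correction: hypothesis (2) is not what makes $\pi_z\colon\cV_z\to\spec A_z^{G_z}$ a good moduli space morphism (that is automatic from linear reductivity of $G_z$); in \cite{AFSV14} it is used in the gluing step, to control closed points in closures of points of $\cZ$ and to verify that the glued morphism $\cZ\to M$ is genuinely a good moduli space morphism rather than merely one \'etale-locally over each chart before the Cartesian property is established. You have correctly isolated the genuinely hard input (the ``strongly \'etale implies Cartesian on good moduli spaces'' statement), so modulo that misattribution the proposal is sound as a sketch of the cited result.
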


\begin{rem}
The local condition for descending the line bundle to a good quotient already appeared in \cite[Theorem 2.3]{DN89}.
See also \cite[Lemma 11.7]{FS90} and the discussion in \cite[Section 6.2]{OSS}.
\end{rem}
To apply the theorem above,  let us recall the local GIT picture described in \cite[Theorem 8.5]{LWX}. For any isomorphic class $[z]\in {\overline{\cM}}$,  there is a unique $\SL(N_m)$-orbit $O_z\subset Z^\kps$ lying over $[z]$. If we let $\hilb(\PP^{N_m-1},\chi)\subset \PP^K$ be the Pl\"ucker embedding, then for any fixed representative $z\in O_z$ there is a $\Aut(\cX_z)$-invariant linear subspace $z\in\PP W\subset \PP^K$ and a $\Aut(\cX_z)$-invariant open neighborhood $z\in\cU_z\subset \PP W\cap Z^\kss$ such that the GIT quotient \Red{$[\cU_z\sslash\Aut(\cX_z)_0]$} gives rise to a local \'etale chart $[z]\in \cV_{[z]} \subset {\overline{\cM}}$ around $[z]$, where \Red{$\aut(\cX_z)_0$  is the identity component of $\aut(\cX_z)$} . This is precisely the local quotient presentation needed for the quotient stack $[Z^\ast /\SL(N_m)]$. By abusing the notation, we will still let $\cU_z$ to denote $\cU_z\times_{Z^\kss_{\rm red}}Z^\ast$. \Red{Furthermore, in order to descend $\lam_\CM$, we  need the stabilizer $\aut(\cX_z)$ of any {\em closed} point $z\in Z(\CC)$ acts trivially on $\lam_\CM|_z$. By \cite[Theorem 1.3]{LWX},  we have a local quotient presentation $[\cU_z\sslash\aut(\cX_z)_0]$ around $z$ such that  $z\in \cU_z$  is GIT  poly-stable with respect to $\Aut(\cX_z)_0$-linearization of $\sO_{\PP W}(1)|_{\cU_z}$.
On the other hand, by \cite[8.5]{LWX} $\lambda_\CM|_{z}$ is K-polystable since it is GIT polystable. So the Futaki invariant vanishes and hence the Lie algebra $\faut_{z}$  acts trivially on $\lam_\CM|_{z}$. Indeed, it's now well known that the Futaki invariant (\cite{Fut83}, \cite{Fut90}, \cite{DT92}) is the same as the action of $\faut_{z}$ on $\lambda_{\CM}|_{z}$ (see \cite{Tian1997}, \cite{Don00}, \cite{PT09}).  In order to trivialize the action of $\aut(\cX_z)$, let us introduce $k_z:=|\Aut(\cX_z)/\Aut(\cX_z)_0|$.  Then $\aut(\cX_z)$ acts trivially on $\left.\lam_\CM^{\otimes {k_z}}\right|_z$.}

\begin{lem}\label{uni-k}
$k_z$ is uniformly bounded for $z\in Z^\ast$, i.e. $k_z<k=k(m,\chi)$ with $m,\chi$ being fixed in \eqref{Z}.
\end{lem}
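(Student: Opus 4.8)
The plan is to bound the order of the finite group $\Aut(\cX_z)/\Aut(\cX_z)_0$ uniformly over all $z\in Z^\ast$ by a constant depending only on $m$ and $\chi$ (equivalently only on the fixed Hilbert polynomial, hence on the dimension $n$ and the anticanonical volume). First I would note that for every $z\in Z^\ast$ the corresponding $\cX_z\subset\PP^{N_m-1}$ is a K-polystable $\QQ$-Fano variety embedded by (a multiple of) its anticanonical linear system, so $\Aut(\cX_z)$ is a linear algebraic subgroup of $\PGL(N_m)$ acting on $\cX_z$; moreover, since $\cX_z$ is K-polystable, $\Aut(\cX_z)$ is \emph{reductive}. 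The quantity $k_z=|\Aut(\cX_z)/\Aut(\cX_z)_0|$ is the number of connected components of this reductive group.

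The key input is a boundedness/constructibility argument. The family $\cX\to Z^\ast$ is a single family of finite type, and the automorphism group scheme $\mathrm{Aut}(\cX/Z^\ast)\to Z^\ast$ is of finite type over $Z^\ast$: it is a locally closed subscheme of a relative Hilbert scheme of $\cX\times_{Z^\ast}\cX$ (graphs of automorphisms), or alternatively, since everything is embedded in $\PP^{N_m-1}$ by the same very ample bundle, $\Aut(\cX_z)$ is cut out inside $\PGL(N_m)$ by the condition of preserving $\cX_z$, so $\mathrm{Aut}(\cX/Z^\ast)$ is a closed subscheme of $Z^\ast\times\PGL(N_m)$. For a finite-type group scheme $G\to T$ over a Noetherian base, the function $t\mapsto (\text{number of connected components of } G_t)$ is constructible and bounded; this follows from generic flatness applied to $G\to T$ together with Noetherian induction (on a flat locus the number of components of the fibers is locally constant, then stratify). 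Hence $z\mapsto k_z$ takes finitely many values on $Z^\ast$, so $k:=1+\max_{z}k_z<\infty$ and it depends only on the family, i.e. only on $(m,\chi)$. Then $\Aut(\cX_z)$ acts trivially on $\lambda_\CM^{\otimes k_z}|_z$ (as established in the paragraph preceding the lemma, using that the Futaki invariant vanishes for K-polystable $\cX_z$ so the identity component acts trivially, and the component group has order $k_z$), hence \emph{a fortiori} on $\lambda_\CM^{\otimes k}|_z$ for the uniform $k$.

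An alternative, more hands-on route avoiding constructibility machinery: combine properness of $\overline\cM$ with the fact that $\Aut(\cX_z)$ is reductive and acts faithfully on $H^0(\cX_z,K_{\cX_z}^{-m})$ of fixed dimension $N_m$. A reductive subgroup $G\subset\GL(N_m)$ has its component group $G/G_0$ embeddable, after a suitable reduction, in a finite subgroup of $\GL(N_m)$ of order bounded by a Jordan-type bound $J(N_m)$ (Jordan's theorem gives a bound on the order of finite subgroups of $\GL(N_m)$ up to bounded-index abelian normal subgroups; more simply, the component group of a reductive group acting faithfully on a fixed-dimensional space is finite and, one checks, of order bounded in terms of $N_m$ alone, e.g. via its action on the finitely many Borels / on the root datum whose size is controlled by $N_m$). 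Either way the bound depends only on $N_m=\chi(m)$, hence only on $(m,\chi)$.

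The main obstacle is making the uniformity genuinely independent of $z$ rather than just finite for each $z$: one must ensure the bound does not secretly depend on which stratum of $Z^\ast$ the point $z$ lies in. The constructibility argument handles this cleanly because $Z^\ast$ is of finite type, so only finitely many strata occur and we take the max; I would present that as the main line of proof. The Jordan-theorem route is conceptually appealing but requires care to extract a component-group bound (as opposed to a bound on finite subgroups in general) and to control reductive groups that are not connected; I would relegate it to a remark. A secondary point to check is that $Z^\ast$, being the seminormalization of $(Z^\kss)_{\mathrm{red}}$, is still Noetherian of finite type over $\CC$, which is immediate.
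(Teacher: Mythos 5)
Your main line of argument is essentially the paper's own proof: the paper also realizes the $\Aut(\cX_z)$ as fibers of the finite-type group scheme $\mathrm{Isom}_{Z^\ast}(\cX,\cX)\to Z^\ast$ over the Noetherian base $Z^\ast$ and concludes the number of components of the fibers is uniformly bounded. The paper states this more tersely (without the generic-flatness/Noetherian-induction elaboration or the Jordan-theorem alternative you add as a remark), but it is the same approach.
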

\begin{proof}Let us consider the universal family $\cX\to Z^\ast$, which is a bounded family. Then $\aut_{Z^\ast}\cX:={\rm Isom}_{Z^\ast}(\cX,\cX)$ is a group scheme over $Z^\ast$. In particular, this implies that number of component over each Zariski open set of $Z^\ast$ is uniformly bounded.
\end{proof}
As a consequence,  the action of $\aut(\cX_z)$ on $\lam_\CM^{\otimes k}$ is trivial for all closed point $z\in Z(\CC)$.
 This together with the proof of \cite[Theorem 8.5]{LWX} imply all the assumptions of Theorem \ref{good-line} are met, and hence $\lam_\CM^{\otimes k}$ descends to a line bundle $\Lam^{\otimes k}_\CM$ over $\overline\cM$ as we desired.


With $\Lam_\CM$ in hand, we may proceed the proof of the main result of this section.
\begin{proof}[Proof of Theorem \ref{descend} (=Theorem \ref{thmext})]
To finish the proof, we need to descend $h_{\rm DP}$ to a metric on $\Lam_\CM$. To do that, let us fix $[z]\in \overline{\mM}$. We choose $z\in Z^{\kps}$ and a $\aut(\cX_z)$-equivariant slice $\mU_z$ such that $\mU_z\sslash \aut(\cX_z)=\cV_{[z]}$ is an \'etale neighborhood of $[z]$ as in the \cite[Theorem 8.5]{LWX}.  Fix a generator $\mathfrak{l}_z=\la l_0, \dots, l_n\ra(\cX_z)$ of $\lambda_{\rm CM}(\cX_z)$. By the proof of \cite[Theorem 2.3]{DN89} or \cite[Theorem 10.3]{Alp13}, we see that $\mathfrak{l}_z$ can be extended to an $\aut(\cX_z)$-invariant section $\mathfrak{l}\in H^0\left(\cU_z,\left.\lam^{\otimes k}_\CM\right|_{\cU_z}\right)^{\aut(\cX_z)}$, which  descends to a local section $[\mathfrak{l}]\in H^0\left(\cV_{[z]},\left.\Lam_\CM^{\otimes k}\right |_{\cV_{[z]}}\right)$. Then we define a Hermitian metric on $\Lambda_{\rm CM}$ by
\[
\left\|[\mathfrak{l}]\right\|_{h_{\rm DP}}([z])=\|\mathfrak{l}\|_{h_{\rm DP}}(z).
\]
By $\aut(\cX_z)$-equivariance of the metrized Deligne pairing in Lemma \ref{isom}, the Hermitian metric $h_{\rm DP}$ on $\Lambda_{\rm CM}$ is well defined. Now we claim that $h_{\rm DP}$ is continuous on $\Lambda_{\rm CM}$.
For that, let $[z_i]\stackrel{i\to \infty}{\longrightarrow} [z]$ in $\overline{\mM}$ be a sequence and  $z_i\to z\in \mU_z\cap Z^{\kps}$ be the lifting, we need to show that $\|\mathfrak{l}\|_{h_{\rm DP}}(z_i)\rightarrow \|\mathfrak{l}\|_{h_{\rm DP}}(z)$. \Red{Recall that, by the change of metric formula  \eqref{comform}, we have formula \eqref{KEhDP}:}
\[
\|\mathfrak{l}\|_{h_{\rm DP}}^2(z_i)=\|\mathfrak{l}\|_{\widetilde{h}_{\rm DP}}^2 e^{-\kU_{i}}(z_i),
\]
where
\[
\kU_i=-\sum_{j=0}^n \int_{\cX_{z_i}}\tku_{z_i} \omega_{z_i}^j\wedge \widetilde{\omega}_{z_i}^{n-j}
\]
and $\widetilde{h}_{\rm DP}$ is the Deligne metric on $\lam_\CM$ defined using the volume form $\ti \Omega$ on $K^{-1}_{\cX/Z^\ast}$.
By Theorem \ref{Zhang}, we know that $\Red{\|\mathfrak{l}\|^2_{\widetilde{h}_{\rm DP}}(z_i)\rightarrow \|\mathfrak{l}\|^2_{\widetilde{h}_{\rm DP}}(z)}$.
By the proof of Lemma \ref{bdPsi}, all we need is that
\begin{equation}\label{tU}
\tku(z_i)\stackrel{i\to \infty}{\longrightarrow} \tku(z).
\end{equation}
Now by our construction  $\cX|_{\cU_z}\to\cU_z$ is a  family of klt Fano varieties.
So by the proof of Proposition \ref{bdU}, 
\eqref{tU} is a consequence of
\[
\lim_{z'\rightarrow z} \int_{\cX_{z'}}\tO_{\cX_{z'}}=\int_{\cX_z}\tO_{\cX_z}, \text{ for } z'\in \mU_z.
\]
which will be proved in Lemma \ref{kltconv}.

Finally we show that $(\Lambda_{\CM}, h_{\rm DP})$ has positive curvature in the sense of Definition \ref{defcur} in Section \ref{secPSH}.  As explained at the end of Section \ref{secPSH}, this question is local.
By the continuity of $h_{\rm DP}$ and Theorem \ref{GRext}, we just need to verify the positivity over $\mM$.
By Definition \ref{defpsh}, we need to verify the positivity along any analytical curve. So letting $\tau: \Delta\rightarrow \cM$ be any holomorphic map, we need to verify the positivity for $(\tau^*\Lambda_{\CM}, \tau^*h_{\rm DP})$. After possibly finite base change 
\Red{$p_1:\Delta\to \Delta$,  we can lift $\tau$ to a holomorphic map: ${\tau}_1: \Delta\rightarrow (Z^{\kss})^{\circ}$, such that $\tau_1 (\Delta^{\circ})$ is contained in  a component of $(Z^{\kps})^{\circ}$ where $ \Delta^{\circ}= \Delta\setminus \{\mbox{finite points}\}$. However, by \cite[3.1]{LWX}, we know that after shrinking $\Delta$ and replacing $\tau$, we can always assume that  for every point $t\in \Delta$, $\tau_1(t)\in (Z^{\kps})^{\circ}$.
}

\Blue{
Let  $\tilde{\tau}=p_2\circ \tau_1$ with $p_2:  (Z^{\kss})^{\circ}\to \mM$ be the quotient morphism. Since $p_1$ is generically smooth and $h_{\rm DP}$ is continuous, by Theorem \ref{shext} we just need
to verify the positivity of $(\tilde{\tau}^*\lambda_{\CM}, \tilde{\tau}^*h_{\rm DP})$. Now since $\cX\times_{\tilde{\tau}, (Z^{\kss})^{\circ}} \Delta$ is a flat family K\"{a}hler-Einstein Fano manifolds over $\Delta$, we get the positivity by the positivity of $\omega_{\rm WP}^\circ$ explained in Section \ref{WP-CM} (see also the proof of Theorem \ref{CMemb} in the next Section).
}
\end{proof}
\begin{rem}
It may be possible to verify the continuity of $h_{\rm DP}$ using directly the Hermitian metric on $K_{\cX/S}^{-1}$ by $\{\omega_t^n\}$. By \cite{CC97}, we know that the the volume measure is continuous under the GH convergence. So we indeed expect that the Hermitian metric $\{\omega_t^n\}$ changes continuously with respect to $t$ so that the metric on the Deligne pairing changes continuously. However, since the volume measure is not exactly the same as the volume form $\omega_t^n$, some extra arguments are needed. \end{rem}

\section{Quasi-projectivity of $\cM$}

\subsection{Proof of Theorem \ref{CMemb}}\label{s-proof}
In this section, we verify the criterion for quasi-projectivity embedding in Theorem \ref{NM}, which generalizes the classical Nakai-Moishezon criterion to the normal non-complete algebraic space $U$ with a  compactification $M$. Theorem \ref{NM} follows from \cite{Nak00, Bir13} when the underlying space $M$ is known to be projective. We reduce the case of normal algebraic space to this known case. We do not know whether this hold for general proper algebraic space.


 \begin{thm}\label{NM}\Red{Let $M$ be a {\em normal} proper algebraic space that} is of finite type over $\mathbb{C}$.
Let $L$ be a line bundle on $M$ and  $M^{\circ}\subset M$ an open subspace. We assume
$L^m\cdot Z\ge  0$ for any $m$-dimensional irreducible subspace and the strictly inequality holds for any $Z$ meets $M^{\circ}$.
Then for sufficiently large power $k$, $|L^k|$ induces a rational map which is an embedding restricting on $M^{\circ}$.
\end{thm}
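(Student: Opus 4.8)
The plan is to reduce the statement to the case where $M$ is projective, where it is known by \cite{Nak00, Bir13}. The key tool is Chow's lemma for algebraic spaces (or rather its consequence that a proper algebraic space admits a proper birational map from a projective scheme), together with the normality hypothesis on $M$ to descend the positivity/embedding statement back down. First I would invoke the fact that for a proper algebraic space $M$ of finite type over $\CC$ there exists a projective scheme $\widetilde M$ and a proper birational morphism $p: \widetilde M \to M$; after composing with a resolution or normalization we may take $\widetilde M$ to be a normal projective variety, and we may arrange $p$ to be an isomorphism over a dense open set $V \subset M$ whose complement has codimension $\ge 2$ in $M$ is too much to hope for in general, so instead I would be careful to track the exceptional locus. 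Set $\widetilde L = p^* L$ and $\widetilde{M}^{\circ} = p^{-1}(M^{\circ})$.

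The second step is to check the Nakai--Moishezon-type positivity hypothesis for $(\widetilde M, \widetilde L, \widetilde M^{\circ})$. For any $m$-dimensional irreducible subvariety $\widetilde Z \subset \widetilde M$, its image $Z = p(\widetilde Z)$ is irreducible of dimension $m$ or less. If $\dim Z = m$, then $p|_{\widetilde Z}: \widetilde Z \to Z$ is generically finite of some degree $d \ge 1$, and the projection formula gives $\widetilde L^m \cdot \widetilde Z = d\, (L^m \cdot Z) \ge 0$, with strict inequality when $Z$ (hence $\widetilde Z$) meets $M^\circ$ (resp. $\widetilde M^\circ$). If $\dim Z < m$, then $\widetilde Z$ is contracted by $p$; this is the delicate case, and the point is that $\widetilde Z$ then lies in the $p$-exceptional locus $E \subset \widetilde M$, which maps into the "bad" non-isomorphism locus $B = p(E) \subset M$. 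One then needs $\widetilde L^m \cdot \widetilde Z = (p^*L)^m \cdot \widetilde Z \ge 0$: since $\widetilde L$ restricted to a neighborhood of $\widetilde Z$ is the pullback of a line bundle on a lower-dimensional base, $\widetilde L|_{\widetilde Z}$ is numerically trivial along the fibers, but $\widetilde Z$ could be horizontal; the clean way is to observe that $\widetilde L^m \cdot \widetilde Z$ only depends on the numerical class, and $p^* L \cdot C = 0$ for every curve $C$ contracted by $p$, so if $\widetilde Z$ is contracted then $(p^*L)|_{\widetilde Z}$ is not ample and in fact $(p^*L)^{\dim \widetilde Z}\cdot \widetilde Z = 0$ by the projection formula applied to $p|_{\widetilde Z}$ (the pushforward cycle $p_*[\widetilde Z]$ vanishes as $\dim p(\widetilde Z) < \dim \widetilde Z$). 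So the hypothesis of the projective Nakai--Moishezon criterion holds for $\widetilde L$, with the strict inequalities on subvarieties meeting $\widetilde M^\circ$; hence $\widetilde L$ is big and, for $k \gg 0$, $|\widetilde L^k|$ defines a morphism that is an isomorphism onto its image on $\widetilde M^\circ$, or at least on a dense open subset — one should be slightly careful and pass to $M^\circ$ minus its intersection with $B$ first, then enlarge.

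The third step is to descend back to $M$. Since $p$ is birational and $M$ is \emph{normal}, one has $p_* \cO_{\widetilde M} = \cO_M$, hence $H^0(\widetilde M, \widetilde L^{k}) = H^0(M, (p_* \widetilde L^{k}))$ and by the projection formula plus normality $p_*(p^* L^{k}) = L^{k}$, so $H^0(\widetilde M, \widetilde L^k) = H^0(M, L^k)$; the linear systems agree. Over $M^\circ$ (more precisely over $M^\circ \setminus B$, where $p$ is an isomorphism), the rational map $\Phi_{|L^k|}$ is identified with $\Phi_{|\widetilde L^k|}$, which is an embedding there. To handle $M^\circ \cap B$, I would use that $M^\circ$ is covered by such open sets as the compactification $M$ varies, or more directly argue that being an embedding is an open condition and the locus where $\Phi_{|L^k|}$ fails to be an immersion or fails to be injective is a closed subset of $M^\circ$ not meeting $M^\circ \setminus B$; since we can choose the auxiliary projective model $p$ to be an isomorphism over any prescribed quasi-projective open piece of $M^\circ$ (Chow's lemma gives $p$ an isomorphism over a dense open, and $M^\circ$ is a finite union of affines up to a codimension argument is not automatic — but one can at least cover $M^\circ$ by finitely many opens each contained in the iso-locus of some projective modification), a standard gluing/quasi-compactness argument upgrades "embedding on a dense open of $M^\circ$" to "embedding on all of $M^\circ$", after replacing $k$ by a large multiple.

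\medskip

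\noindent\textbf{Main obstacle.} The hard part will be step three, the descent of the \emph{embedding} (not merely the bigness/semiampleness) statement from the projective model $\widetilde M$ down to the normal algebraic space $M$, uniformly over all of $M^\circ$ rather than just over the locus where the chosen $p$ is an isomorphism. This is where the hypothesis that $M$ is normal (and not merely proper) is essential — it gives $p_*\cO_{\widetilde M} = \cO_M$ and hence control of sections — and where the analogous statement for general proper algebraic spaces is genuinely unclear, exactly as the authors remark. A careful treatment of the $p$-exceptional contributions to the intersection numbers in step two (ensuring no negativity is introduced) is the other technical point, but it is a routine projection-formula computation once set up correctly.
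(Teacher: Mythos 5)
Your proposal follows the same overall strategy as the paper (reduce to the projective case, apply Nakamaye/Birkar, descend via normality), but there is a genuine gap in the positivity check, which you partially recognize but do not resolve. The problem is precisely what you flag in your ``pass to $M^\circ$ minus $B$, then enlarge'' parenthetical: if $p: \widetilde M \to M$ is a global Chow modification and $\widetilde Z \subset \widetilde M^\circ = p^{-1}(M^\circ)$ is a subvariety contracted by $p$, then as you correctly compute $(p^*L)^{\dim \widetilde Z}\cdot\widetilde Z = 0$, so the \emph{strict} inequality fails on a subvariety meeting $\widetilde M^\circ$. The projective Nakai--Moishezon/Birkar criterion therefore does \emph{not} apply with open set $\widetilde M^\circ$; it only applies with the smaller open $\widetilde M^\circ \setminus \operatorname{Exc}(p)$, which after descent yields an embedding only away from the non-isomorphism locus $B$ of $p$. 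Your proposed remedy --- vary the projective model and glue --- is the right idea, but the quasi-compactness/gluing argument is not set up, and $M^\circ$ need not be quasi-compact or proper, so ``take a finite cover and a common multiple of $k$'' is not available without further input.

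The paper circumvents this cleanly by replacing the single global Chow modification with a \emph{pointwise local} construction that has no exceptional locus over the open set one cares about. After two preliminary reductions --- (i) it suffices to show that $L^{\otimes k}$ separates any two points of $M^\circ$, which is upgraded to an embedding by blowing up the indeterminacy ideal, normalizing, and using $\mu^*|L^{\otimes kk_1}| \supset |L_1^{\otimes k_1}| + k_1 E$ together with normality of $M$; and (ii) a finite Galois surjection $f: M_1 \to M$ from a normal \emph{scheme} (\cite[Lemma 2.8]{Kollar90}) plus the norm map, reducing from algebraic spaces to schemes --- the paper fixes a point $x \in M^\circ$, picks a quasi-projective neighborhood $U_x \subset M^\circ$ with projective compactification $M_x$, and resolves the indeterminacy of the rational map $M_x \dashrightarrow M$ using Raynaud--Gruson flattening applied to the graph. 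The result is a normal projective $M'$ with $p: M' \to M$ and $q: M' \to M_x$, where $q$ is projective and $p$ is an isomorphism over $U_x$. Because $p$ is an isomorphism on $U'_x \cong U_x$, no $p$-contracted subvariety of $M'$ meets $U'_x$, so the Nakai--Moishezon hypothesis for $(M', U'_x, p^*L)$ holds with no exceptions, and \cite[Theorem 1.3]{Bir13} gives that $U'_x$ avoids the augmented base locus of $p^*L$, hence $L^{\otimes k}$ is base-point free (and separating) along $U_x$ for $k \gg 0$. This is exactly the uniformity-over-$x$ mechanism your gluing step was reaching for, and reduction (i) is what lets one assemble the per-point conclusions into a single embedding statement. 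If you incorporate reduction (i) and replace the global Chow model by the per-point model that is an isomorphism over a chosen quasi-projective neighborhood, your argument closes.
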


\begin{proof} We first show that it suffices to prove that $L^{\otimes k}$ separate any two points in $M^\circ$ for sufficiently large $k$. In fact, if this is true, then we can blow up the indeterminacy ideal $I$ of \Red{the rational map induced by $|L^{\otimes k}|$} and then take a normalization to get $\mu:M'\to M$. Then $\mu^*(L^{\otimes \Red{k} })=L_1+E$ where \Red{ $L_1\ge 0$ and $E$} is base point on $M'$, with the induced morphism separate any two points on $\mu^{-1}(M^\circ)\cong M^\circ$. Then we know that for sufficiently large $k_1$,
$$|L^{\otimes kk_1}|=\mu^*|L^{\otimes kk_1}|\supset |L_1^{\otimes k_1}|+k_1E$$
embeds $M^\circ$ as $M$ is normal.


Since there is always a Galois finite surjective morphism $f:M_1\to M$ from a normal scheme $M_1$ (cf. \cite[Lemma 2.8]{Kollar90}). Using the  Norm map, one easily see that
$f^*L$ separate any two points on $f^{-1}(M^\circ)$ implies
$${\rm Nm} |f^*(L^{\otimes k })|\subset |L^{\otimes k\cdot \deg f}|$$
separate two points on $M^\circ$ for  $k\gg 0$.
 So we can assume $M$ is a normal proper (possibly non-projective) scheme.

For any point $x\in M^\circ$, there is quasi-projective neighborhood  $U_x\subset M^\circ$ with $U_x$ being an open set of a projective scheme $M_x$. Consider the rational map  $M_x\dasharrow M$ and applying \cite[5.7.11]{RG71} to the morphism $\Gamma_x\to M$ from its graph $\Gamma_x$ to $M$, we see that the indeterminacy locus of $M_x\dasharrow M$ can be resolved by a sequence of blow ups. we know that there exists a normal variety $M'$ which admits morphisms $p:M'\to M$ and $q:M'\to M_x$ such that $q$ is relative projective over $M_x$. In particular, $M'$ is projective.

Consider $p^*L$ and the open set $U'_x\subset M'$ which is isomorphic to $U_x$, then the triple $(M', U_x', p^*L)$ satisfies the same assumption as $(M,M^\circ,L)$ in the theorem. Since $M'$ is projective, by our assumption, we know that $U'_x$ does not meet {the exceptional locus} $\mathbb{E}(p^*L)$, which is the union of subvarieties on which $p^*L$ is not big. Then it follows from \cite[Theorem 1.3]{Bir13} that for sufficiently large $k$, $p^*L^{\otimes k}$ does not have base points along  $U'_x$. Since $M$ is normal, this implies $L^{\otimes k}$ does not have base point along $U_x$.
\end{proof}

\begin{rem}In \cite[Theorem 6]{ST04}, a similar quasi-projectivity criterion was given in analytic setting.
\end{rem}

\begin{proof}[Proof of Theorem \ref{CMemb}]
Since the deformation of  any smooth Fano manifold is unobstructed, the Artin stack  classifying  $n$-dimensional smooth Fano manifolds   is  {\em smooth}. Now we consider the open substack parametrizing K-semistable Fano manifolds. Hence its {\em good moduli }space  in the sense of \cite{Alp13}  is  a normal algebraic space. So we  can apply Theorem \ref{NM} to the pair $(M,L):=(\overline\cM^{\rm n},n^*\Lam_\CM^{\otimes k})$ with $k$ being fixed in Lemma \ref{uni-k}
and $M^\circ=\cM$. For that, we need to show that for any irreducible subspace $Y\subset \overline\cM$ satisfying $Y\cap \cM\neq \emptyset$ we have $L^m\cdot Y>0$.



To achieve that, without loss of generality, we may assume that $Y$ is reduced.  We claim that there is a point $[z]\in V_{[z]}\subset Y\cap \cM$ together with an open neighborhood $V_{[z]}$, on which $(V_{[z]},\omega_\WP|_{V_{[z]}})$ is a smooth  K\"ahler manifold, from which we deduce $L^m\cdot Y=\int_Y\omega_\WP^m>0$ (cf. Proposition \ref{GE} ) and hence finish our proof.

To do that, let  us take a smooth point $[z']\in Y$ and
$$Z^\ast\supset\cU_{z'}\stackrel{f_{z'}}{\longrightarrow} \cV_{[z']}\subset \cM$$
 be the local quotient presentation as in Section \ref{descend}.
\Red{ Let $Z^\ast_{Y,\cU_{z'}}$ be a component of $f_{z'}^{-1} (\cV_{[z']}\cap Y)$ which dominates $(\cV_{[z']}\cap Y)$.
 Then by our construction
 $$f_{z'}|_{Z^\ast_{Y,\cU_{z'}}}:Z^\ast_{Y,\cU_{z'}}\longrightarrow \cV_{[z']}\cap Y$$
 is surjective, hence there is smooth point $z\in  Z^\ast_{Y,\cU_{z'}}$ such that $df_{z'}(z)$ is surjective by Bertini-Sard's theorem.  In particular, we are able to find a local slice  $z\in S$  of equal dimension locally isomorphic to an open neighborhood $V_{[z]}\subset S$ of $[z]=f_{z'}(z)\in V_{[z]}\subset\cV_{[z']}$.  So the slice $S$ must be transversal to $\aut(\cX_{z'})$-orbits near $z$, this implies that  the restriction of the universal family $\cX|_S\to S$ is {\em generically effective} of $(f _{z'}|_S)^{-1}(V_{[z]})$ by Kuranshi's local completeness Theorem. By Section \ref{WP-CM}, we conclude that the restriction of  {$\omega_\WP$  to a dense  open subset of $V_{[z]}$ is a {\em smooth K\"ahler form}}.
}\end{proof}

\subsection{Remarks on the projectivity of $\overline{\mM}$}\label{secproj}

We expect that the proper moduli space $\overline{\mM}$ constructed in \cite{LWX} is actually projective. Indeed, the CM line bundle $\Lambda_{\rm CM}$ can very well be ample (not only nef and big). Using Nakai-Moishezon's criterion for proper algebraic spaces \cite[Theorem 3.11]{Kollar90}, we just need to verify the positivity of intersection number $L^k\cdot Z$ for any subvariety $Z$ contained in $\overline{\cM}\setminus \cM$.
%
%
Using the notation as before, we just need to verify that the curvature of $h_{\rm DP}$ is strictly positive over an open set of the base $Z$.

Here we verify that this holds if $Z$ parametrizes Fano varieties with orbifold singularities. By restricting to an open subset of $Z$, we can assume that there is an {\em effective} flat family of Fano varieties denoted again by $\pi: \cX\rightarrow Z$, such that $\pi$ is a fibration with diffeomorphic orbifold fibers, i.e. $\cX_t$ is diffeomorphic to $\cX_{t'}$ as smooth orbifolds for any $t, t'\in Z$. In particular, the Kodaira-Spencer class comes from $H^1(\cX_t, \mathcal{T}^{\rm orb} )$. Here for any point $x$, there is an open neighborhood $U_x$ and a uniformization covering $\Pi_x: \widetilde{U}_x\rightarrow U_x$ such that $U_x=\widetilde{U}_x/G_x$ for a finite group $G_x$. Then $\mathcal{T}^{\rm orb}(U_x)$ is defined to be $\mathcal{T}(\widetilde{U}_x)^{G_x}$.
It is now well known that any weak K\"{a}hler-Einstein metric on a Fano variety with orbifold singularities is a smooth orbifold K\"{a}hler-Einstein metric. Furthermore we can assume that $\omega_t$ is a smooth family of orbifold K\"{a}hler-Einstein metrics by a straight-forward generalization of the results in \cite[Section 5.3]{Don2008} and \cite{Sz2010} to the orbifold setting.
On the other hand, by pulling back to local uniformization covering $\widetilde{U}_x$ it's easy to see that both $\widetilde{\omega}$ and $\tO$ in Section \ref{bypartialC0} are orbifold smooth. From the equation \eqref{KEeq2}, we also see that $\ku$ is an orbifold smooth function on $\cX$. Now 
we can do the following calculations. For simplicity, let us assume $Z$ is of complex dimension 1 with  a local coordinate function  $t$.
\begin{enumerate}
\item  Using the Stokes formula for fiber integrals along orbifold smooth fibers, $\partial\bar{\partial}$ and $\int_{\cX/S}$ can be interchanged.
\begin{eqnarray*}
\pdJd\kU&=&\pdJd\left(-\sum_{j=0}^n \int_{\cX/S} \tku \left(\widetilde{\omega}+\pdJd\tku\right)^{j}\wedge\widetilde{\omega}^{n-j}\right)\\
&=&-\sum_{j=0}^n \int_{\cX/S}\pdJd\tku  \wedge \left(\widetilde{\omega}+\pdJd\tku\right)^{j}\wedge\widetilde{\omega}^{n-j}.
\end{eqnarray*}
By the proof of Theorem \ref{exthDP}, we know that locally $h_{\rm DP}=e^{-\Psi_\alpha}=e^{-\widetilde{\Psi}_\alpha}e^{-\kU}$. So using \eqref{KrefS} we have:
\begin{eqnarray*}
\pdJd \Psi_\alpha&=&\pdJd \widetilde{\Psi}_\alpha+\pdJd\kU\\
&=&-\int_{\cX/S}\widetilde{\omega}^{n+1}-\sum_{j=0}^n \int_{\cX/S}\pdJd\tku  \wedge \left(\widetilde{\omega}+\pdJd\tku\right)^{j}\wedge\widetilde{\omega}^{n-j}\\
&=&-\int_{\cX/S}\omega^{n+1}.
\end{eqnarray*}
\item
$\omega$ is orbifold smooth and locally is equal to $\sddb\psi$. We can write:
\[
\omega^{n+1}=c(\psi)\omega_t^{n}\wedge \frac{\sqrt{-1}}{2\pi} dt\wedge d\bar{t},
\]
where $c(\psi)$ essentially measures the negativity of $\omega$ in the horizontal direction. By the same calculation as in \cite[Proposition 3]{Sch12} (see also \cite{Be10}), we see that $c(\psi)$ satisfies an elliptic equation:
\[
-\Delta_t\; c(\psi)-c(\psi)=|A|_{\omega_t}^2.
\]
Here $\Delta_t$ is the Laplace operator associated to the orbifold K\"{a}hler-Einstein metric $\omega_t$ on $\cX_t$ and $A\in A^{0,1}(\mathcal{T}^{\rm orb}_{\cX_t})$ represents the Kodaira-Spencer class of the deformation, which is obtained as follows. We choose local orbifold holomorphic coordinate $\{z^i, t\}$ on $\cX$. By the non-degeneracy of $\omega$ along the fiber $\cX_t$, there is a unique horizontal lifting $V$ of $\partial_t$ satisfying:
\[
d\pi(V)=\partial_t, \quad \omega(V, \partial_{\bar{z}^j})=0, \forall 1\le j\le n.
\]
Then $A=A_{\bar{j}}^i  d\bar{z}^j\otimes \partial_{z^i}$ is given by $(\bar{\partial} V)|_{\cX_t}$. For details, see \cite{Sch12}.

\item So we have:
\begin{eqnarray}\label{calA}
-\int_{\cX/S}\omega^{n+1}&=&-\int_{\cX/S}c(\psi)\omega_t^n\wedge \frac{\sqrt{-1}}{2\pi}dt\wedge d\bar{t}\nonumber\\
&=&-\int_{\cX/S}\left(\Delta_t c(\psi)+c(\psi)\right)\omega_t^n\wedge \frac{\sqrt{-1}}{2\pi} dt\wedge d\bar{t}\nonumber\\
&=&\left(\int_{\cX/S}|A|_{\omega_t}^2\omega_t^n\right)\frac{\sqrt{-1}}{2\pi} dt\wedge d\bar{t}.
\end{eqnarray}
\end{enumerate}
Since $\cX\rightarrow Z$ is generically effective, we know that $[A]\in H^1(\cX_t, \mathcal{T}^{\rm orb})$ is generically nonzero over $Z$. So for generic $t\in Z$, $A$ is non-vanishing over $\cX_t$ and the right-hand-side of \eqref{calA} is strictly positive. Using this strict positivity and similar arguments as in the proof of the quasi-projectivity of $\cM$, we get the following result:
\begin{prop}
Let $\overline{\cM}^{\rm orb}$ be the locus parametrizing smoothable K-polystable Fano varieties with at worst orbifold singularities. Then the normalization of $\overline{\cM}^{\rm orb}$ is quasi-projective.
\end{prop}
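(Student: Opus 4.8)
The plan is to deduce this from the quasi-projectivity criterion Theorem~\ref{NM}, following the same scheme as the proof of Theorem~\ref{CMemb}; the only genuinely new ingredient is the strict positivity computation \eqref{calA} for families with orbifold fibres, which has just been established above. I would set $M$ to be the normalization of the closure of $\overline{\cM}^{\rm orb}$ inside $\overline{\mM}$ (a normal proper algebraic space of finite type over $\CC$), take $L=n^{*}\Lambda_{\rm CM}^{\otimes k}$ with $k$ as in Lemma~\ref{uni-k}, and let $M^{\circ}\subset M$ be the preimage of $\overline{\cM}^{\rm orb}$. Since any locally closed subset is open in its closure, $M^{\circ}$ is an open subspace of $M$, and because normalization commutes with open immersions it is canonically identified with the normalization of $\overline{\cM}^{\rm orb}$. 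Thus it suffices to verify the hypotheses of Theorem~\ref{NM} for the triple $(M,M^{\circ},L)$: this then produces, for $k'\gg 1$, a rational map defined by $|L^{\otimes k'}|$ that is an embedding on $M^{\circ}$, exhibiting $M^{\circ}$ as quasi-projective.

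The nefness hypothesis is immediate from Theorem~\ref{thmext}: the descended metric $h_{\rm DP}$ on $\Lambda_{\rm CM}$ has positive curvature current $\omega_{\rm WP}$ with continuous, hence locally bounded, local potentials, so for every irreducible $Z\subset M$ of dimension $m$ the Bedford--Taylor measure $(k\,\omega_{\rm WP})^{m}$ is well defined on $Z^{\reg}$ (or, if one prefers, after passing to the normalization of $Z$) and $L^{m}\cdot Z=\int_{Z}(k\,\omega_{\rm WP})^{m}\ge 0$. The substance of the criterion is then the strict inequality $L^{m}\cdot Z>0$ whenever $Z$ meets $M^{\circ}$, and this is exactly where the orbifold hypothesis enters.

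So let $Z\subset M$ be irreducible of dimension $m$ with $Z\cap M^{\circ}\ne\emptyset$; we may assume $Z$ reduced. Arguing as in the proof of Theorem~\ref{CMemb}, I would pick a smooth point $[z']$ of $Z$ lying in $M^{\circ}$, a local quotient presentation $f_{z'}:\cU_{z'}\to\cV_{[z']}$ as in Section~\ref{s-descend}, a component $W$ of $f_{z'}^{-1}(\cV_{[z']}\cap Z)$ dominating $\cV_{[z']}\cap Z$, and, by Bertini--Sard, a smooth point $z$ of $W$ at which $df_{z'}$ is surjective; a local slice $S\ni z$ of dimension $m$ is then transversal to the $\aut(\cX_{z'})$-orbits, so by Kuranishi's local completeness theorem the restricted family $\cX|_{S}\to S$ is generically effective, and $S$ is locally isomorphic to a neighbourhood $V_{[z]}$ of $[z]$ in $Z$. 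Since the fibre over $z$ has at worst orbifold singularities, after shrinking $S$ its fibres are diffeomorphic as smooth orbifolds and, by the orbifold versions of \cite{Don2008} and \cite{Sz2010}, the K\"ahler--Einstein metrics $\omega_{t}$ together with the reference objects $\widetilde\omega$ and $\tO$ of Section~\ref{bypartialC0} are orbifold smooth. Applying Stokes' theorem for fibre integrals along orbifold-smooth fibres exactly as in \eqref{calA}, the local potential $\Psi_{\alpha}$ of $h_{\rm DP}$ on $S$ satisfies $\pdJd\Psi_{\alpha}=-\int_{\cX/S}\omega^{n+1}$, i.e. the curvature of $h_{\rm DP}$ on $S$ is the orbifold Weil--Petersson form, which is strictly positive along effective deformation directions (cf. Section~\ref{WP-CM}). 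As $\cX|_{S}\to S$ is generically effective, $\omega_{\rm WP}$ restricts to a genuine smooth K\"ahler form on a dense open subset $V_{[z]}'\subset V_{[z]}\subset Z^{\reg}$; since $\omega_{\rm WP}^{m}\ge 0$ everywhere, we get $\int_{Z}(k\,\omega_{\rm WP})^{m}\ge k^{m}\int_{V_{[z]}'}\omega_{\rm WP}^{m}>0$, i.e. $L^{m}\cdot Z>0$, and Theorem~\ref{NM} then completes the proof.

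I expect the delicate point to be the last one: verifying that, near a generic point of $Z\cap M^{\circ}$, the restricted Kuranishi family is effective \emph{as a family of orbifolds} and that the Weil--Petersson current is an honest smooth K\"ahler metric there. This rests on the orbifold analogue of the smooth dependence results of \cite{Don2008,Sz2010} and, for the Stokes computation \eqref{calA} to be legitimate, on being able to work uniformly on the local uniformizing charts; by contrast, the remaining steps — passing to the closure, normalizing, the Galois-cover and local-slice reductions, and invoking Bedford--Taylor for the intersection numbers — are formal and essentially identical to what was done for Theorem~\ref{CMemb}.
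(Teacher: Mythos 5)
Your proposal is correct and follows the same route as the paper: the paper's own ``proof'' is exactly the remark that the strict positivity established in \eqref{calA} for orbifold families, combined with the argument used for Theorem~\ref{CMemb} (local quotient presentation, Bertini--Sard, local slice, Kuranishi effectiveness, positivity of the Weil--Petersson form), verifies the hypotheses of Theorem~\ref{NM} on the pair $(\overline{\cM}^{\rm n}, n^{-1}(\overline{\cM}^{\rm orb}))$. Your write-up just makes explicit the steps that the paper compresses into ``similar arguments as in the proof of the quasi-projectivity of $\cM$''; note that since $\cM\subset\overline{\cM}^{\rm orb}$ is dense, your $M$ is simply $\overline{\cM}^{\rm n}$, and both your argument and the paper's implicitly rely on $\overline{\cM}^{\rm orb}$ being open in $\overline{\cM}$ so that Theorem~\ref{NM} applies.
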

As a direct consequence, if we consider the case of del Pezzo surfaces, in which  we know that $\overline{\cM}=\overline{\cM}^{\rm orb}$ by \cite{Tian1990}. By applying the above strict positivity and Nakai-Moishezon's criterion for proper algebraic spaces (\cite{Kollar90}), we  immediately obtain:
\begin{cor}
The proper moduli spaces of smoothable K-polystable del Pezzo surfaces are projective.
\end{cor}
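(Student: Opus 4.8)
The plan is to prove that a suitably divisible integral power $\Lambda_{\rm CM}^{\otimes N}$ (clearing the denominators of the $\QQ$-line bundle $\Lambda_{\rm CM}$ of Theorem \ref{descend}) is ample on the proper algebraic space $\overline{\mM}$, via the Nakai--Moishezon criterion for proper algebraic spaces \cite[Theorem 3.11]{Kollar90}; projectivity of $\overline{\mM}$ then follows immediately. By Theorem \ref{thmext} the bundle $\Lambda_{\rm CM}$ carries the continuous metric $h_{\rm DP}$ whose curvature current $\omega_{\rm WP}$ has continuous, hence locally bounded, local potentials; so for every irreducible closed subvariety $Z\subseteq\overline{\mM}$ the Bedford--Taylor product $\omega_{\rm WP}^{\dim Z}$ is a well-defined positive measure on $Z$ and $(\Lambda_{\rm CM}^{\dim Z}\cdot Z)=\int_Z\omega_{\rm WP}^{\dim Z}\ge 0$ (cf. Proposition \ref{GE}). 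What remains is the strict inequality $\int_Z\omega_{\rm WP}^{\dim Z}>0$ for every such $Z$, and I would split this into two cases according to whether $Z$ meets the open locus $\mM$.

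If $Z\cap\mM\neq\emptyset$, the strict positivity is exactly what is obtained in the proof of Theorem \ref{CMemb}: passing to a local quotient presentation around a general smooth point of $Z\cap\mM$, choosing a slice transversal to the $\aut$-orbits, and invoking Kuranishi's local completeness theorem yields a generically effective family over a dense open $V\subseteq Z$; then by the fibre-integral formula \eqref{smWP} and the positivity of the Weil--Petersson form along effective directions (Section \ref{WP-CM}), $\omega_{\rm WP}|_V$ is a smooth K\"ahler form on a dense open subset of $V$, whence $\int_Z\omega_{\rm WP}^{\dim Z}>0$.

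If $Z\subseteq\overline{\mM}\setminus\mM$, then $Z$ parametrizes singular smoothable K-polystable del Pezzo surfaces, all of which have at worst orbifold (quotient) singularities by \cite{Tian1990}, i.e. here $\overline{\mM}=\overline{\mM}^{\rm orb}$. After replacing $Z$ by a dense open subset and passing to a local quotient presentation $\cU_{z'}\to\cV_{[z']}$ as in Section \ref{s-descend}, I would take a smooth point $z$ of the component of the preimage of $Z$ dominating $Z$ and a slice $z\in S$ of equal dimension transversal to the $\aut(\cX_{z'})$-orbits; by the orbifold version of Kuranishi's completeness theorem the restricted universal family $\cX|_S\to S$ is generically effective, with fibres diffeomorphic as orbifolds after further shrinking. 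Then the computation of Section \ref{secproj} applies: combining \eqref{otou}--\eqref{decomp} with the orbifold Stokes formula for fibre integrals, the curvature of $h_{\rm DP}$ over $S$ equals $-\int_{\cX/S}\omega_{\cX}^{n+1}$, which by \eqref{calA}, applied to general one-parameter subfamilies, is a smooth positive semidefinite $(1,1)$-form that is strictly positive exactly along effective infinitesimal deformations. Generic effectivity makes the Kodaira--Spencer class nonzero over a dense open of $S$, so $\omega_{\rm WP}$ is strictly positive there and $\int_Z\omega_{\rm WP}^{\dim Z}>0$.

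The hard part will be the second case: constructing a genuinely generically effective orbifold family over an open subset of a boundary subvariety $Z$ while controlling the possibly positive-dimensional automorphism groups of the degenerate surfaces, and choosing the transversal slice so that the restricted family has nonzero Kodaira--Spencer map — this requires the orbifold analogue of Kuranishi's theorem together with local constancy of the orbifold diffeomorphism type over $\overline{\mM}^{\rm orb}$. A secondary technical point is to promote the identity \eqref{calA}, proved in Section \ref{secproj} for $\dim Z=1$, to strict positivity of $\omega_{\rm WP}|_S$ in all directions at once (by running the one-parameter computation on a sufficiently generic pencil through a general point), and to confirm that $\int_Z\omega_{\rm WP}^{\dim Z}$ really computes $(\Lambda_{\rm CM}^{\dim Z}\cdot Z)$ on the possibly singular $Z$; the latter follows from the global continuity of $h_{\rm DP}$ on $\overline{\mM}$ and standard pluripotential theory.
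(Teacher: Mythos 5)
Your proposal is correct and follows essentially the same route as the paper: apply the Nakai--Moishezon criterion for proper algebraic spaces from \cite[Theorem 3.11]{Kollar90}, handle subvarieties meeting $\mM$ by the argument of Theorem \ref{CMemb}, handle boundary strata via the orbifold fibre-integral computation of Section \ref{secproj} (valid because $\overline{\cM}=\overline{\cM}^{\rm orb}$ for del Pezzo surfaces by \cite{Tian1990}), and derive strict positivity $\int_Z\omega_{\rm WP}^{\dim Z}>0$ from generic effectiveness. The technical points you flag at the end --- the orbifold Kuranishi argument, promoting the one-parameter identity \eqref{calA} to positivity in all directions, and making sense of the intersection number on a possibly singular $Z$ via bounded local potentials --- are exactly the ones the paper compresses into ``similar arguments as in the proof of the quasi-projectivity of $\cM$'', so you have identified where the remaining care lies.
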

As mentioned before, this was known by \cite{OSS} except for del Pezzo surfaces of degree 1.

\section{Appendix I: A uniform convergence lemma}\label{append}

Let $\pi: \cX\rightarrow S$ be a flat family of klt Fano variety, which is {\em holomorphically }\Blue{embedded} into $\mathbb{P}^N\times S$. Assume that $m$ is chosen \Blue{in such a way} that $-m K_{\cX/S}$ is relatively base-point-free. Denote by $\{\widetilde{s}_i, 1\le i\le N_m\}$ the (holomophic) basis of the $\mathcal{O}_S$ module $\pi_*\mathcal{O}_{\cX}(-m K_{\cX/S})$. For any $t\in S$, denote $\widetilde{s}_i(t)=\widetilde{s}_i|_{\cX_t}$.  We can define a volume form on $\cX_t$ by
\[
\tO_t=\left(\sum_{i=1}^{N_m}|\Blue{\widetilde{s}_i}(t)|^2\right)^{-1/m}.
\]
The main technical lemma is
\begin{lem}\label{kltconv}
\Blue{In the above setting, we have the following uniform convergence:}
\[
\lim_{t\rightarrow 0} \int_{\cX_t}\tO_{t}=\int_{\cX_0}\tO_0.
\]
\end{lem}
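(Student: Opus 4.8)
The plan is to reduce the statement to a purely local question near the singularities of the fibers and then control the volume contribution there using the fact that a klt Fano variety has bounded volume-form singularities. More precisely, writing $\tO_t = \big(\sum_i |\widetilde s_i(t)|^2\big)^{-1/m}$, away from $\cX^{\sing}$ the family of forms $\tO_t$ varies smoothly with $t$ (it is the restriction of a fixed smooth relative volume form on the total space, being the pullback of a Fubini--Study--type metric under the holomorphic embedding into $\PP^{N_m-1}\times S$), so on $\cX_t\setminus \mathcal{W}(\delta)$ for a fixed neighborhood $\mathcal{W}(\delta)$ of $\cX^{\sing}$ we get
\[
\lim_{t\to 0}\int_{\cX_t\setminus \mathcal{W}(\delta)} \tO_t = \int_{\cX_0\setminus \mathcal{W}(\delta)} \tO_0
\]
by ordinary smooth convergence of integrands over a family of compact domains with smoothly varying boundary. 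Hence it suffices to show that $\int_{\cX_t\cap \mathcal{W}(\delta)}\tO_t$ can be made uniformly small (in $t$ near $0$) by shrinking $\delta$; equivalently, the volumes $\int_{\cX_t\cap\mathcal{W}(\delta)}\tO_t$ are uniformly bounded and tend to $0$ with $\delta$, uniformly in $t$.

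\textbf{Key steps.} First I would set up a simultaneous log resolution: after possibly shrinking $S$ around $0$, choose an embedded resolution $\mu\colon \widetilde{\cX}\to \cX$ such that $\widetilde{\cX}\to S$ is smooth and the exceptional and singular loci become simple normal crossing divisors relative to $S$; because $\cX_0$ (and the nearby fibers) are klt, the relative canonical discrepancies are all $>-1$. Pulling back $\tO_t$, the integral $\int_{\cX_t\cap\mathcal{W}(\delta)}\tO_t$ becomes $\int_{\mu^{-1}(\mathcal{W}(\delta))\cap \widetilde\cX_t}\mu^*\tO_t$, where now the integrand has, in local coordinates adapted to the SNC divisor, the shape $\prod_j |w_j|^{2a_j}\cdot(\text{smooth positive})$ with all $a_j>-1$ by the klt condition — and crucially these coordinates and exponents can be chosen uniformly for $t$ in a neighborhood of $0$, since the resolution is over $S$ and the family is flat. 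Second, for such integrands one has the elementary estimate that $\int_{\{|w|<\epsilon\}}|w|^{2a}\,\tfrac{\sqrt{-1}}{2}dw\wedge d\bar w = C_a\,\epsilon^{2a+2}$ with $2a+2>0$, so the contribution over a polydisc of radius $\delta$ around any stratum of the divisor is $O(\delta^{c})$ for some $c>0$ depending only on the (finitely many) discrepancies, uniformly in $t$. Third, I would cover $\mu^{-1}(\cX^{\sing})$ by finitely many such coordinate charts, valid uniformly near $t=0$, patch with a partition of unity, and conclude $\int_{\cX_t\cap\mathcal{W}(\delta)}\tO_t \le C\delta^{c}$ for all $t$ near $0$. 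Combining with the smooth convergence away from $\mathcal{W}(\delta)$ and letting first $t\to 0$ then $\delta\to 0$ yields the claim; the trick $\int_{\cX_t\cap\mathcal{W}(\delta)} = \int_{\cX_t}-\int_{\cX_t\setminus\mathcal{W}(\delta)}$ as in \eqref{conv3} together with $\int_{\cX_t}\tO_t = K_{\cX_t}^{-n}=K_{\cX_0}^{-n}$ being topologically constant gives an alternative route to the same uniform smallness.

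\textbf{Main obstacle.} The delicate point is the \emph{uniformity in $t$} of the local model: I need the log resolution, the adapted coordinates, and the SNC exponents to be chosen once and for all for the whole family over a neighborhood of $0\in S$, not fiber by fiber. This is where flatness of $\pi$ and the existence of a resolution of the total space that restricts to a resolution of each fiber are essential, and where I would have to be careful that the klt inequalities $a_j>-1$ hold with a uniform gap $a_j \ge -1+\eta$ for some $\eta>0$ independent of $t$ — this follows from semicontinuity of log canonical thresholds / inversion of adjunction for the flat family, but it is the substantive input. Everything else is the routine estimate $\int_{|w|<\delta}|w|^{2a}<\infty$ for $a>-1$ and a compactness/partition-of-unity argument.
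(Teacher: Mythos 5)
Your overall strategy --- split the integral into a piece away from $\cX^{\sing}$ where $\tO_t$ varies continuously and a piece near $\cX^{\sing}$ whose mass must be made uniformly small --- is the same decomposition used in the body of the paper (compare \eqref{decompestimate} and \eqref{conv3}), and your identification of inversion of adjunction as the decisive positivity input is correct. But the way you propose to bound the singular contribution hides two genuine gaps.

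First, you write ``choose an embedded resolution $\mu:\widetilde\cX\to\cX$ such that $\widetilde\cX\to S$ is smooth and the exceptional and singular loci become SNC divisors relative to $S$.'' Such a resolution does not exist in general when $\dim S>1$: ordinary Hironaka resolution of the total space $\cX$ gives you neither flatness of $\widetilde\cX\to S$ nor any control over the fibers of the composite map. This is exactly why the paper invokes Abramovich--Karu's weak semistable / toroidal reduction theorem, which produces a flat \emph{toroidal} morphism $\pi_\mY:\cY\to T$ only after a birational modification $\mu$ \emph{and} a finite base change $T\to S$ (diagram \eqref{commut}). You need the base change; a log resolution alone is not enough, and the ``semicontinuity / inversion of adjunction for the flat family'' you gesture at does not rescue the construction because the problem is not the discrepancies --- it is the very existence of the uniform SNC model.

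Second, even granting the toroidal model, the estimate $\int_{\cX_t\cap\mathcal{W}(\delta)}\tO_t\le C\delta^c$ uniformly in $t$ is not ``the routine estimate $\int_{|w|<\delta}|w|^{2a}<\infty$.'' On the toroidal space, the set $\widetilde\cX_t\cap\mu^{-1}(\mathcal{W}(\delta))$ is not a fixed polydisc transverse to a divisor; it is a $t$-dependent region cut out by equations of the shape $t_k=\prod_i y_i^{a_{ki}}x_k$, whose image in the $y$-coordinates degenerates as $t\to0$. The paper handles this by an explicit change of variables: in Case (1) the fiber is parametrized by a $t$-increasing region $\mV_t(\delta)$ so dominated convergence applies, and in Case (2) the fiber intersected with a vertical stratum is parametrized (in logarithmic coordinates) by a bounded polytope $\underline{\mV}_s$ whose volume, weighted by $\prod e^{2(1-b_i)u_i}$, must be shown to go to $0$ as $s\to-\infty$; this uses the coarea formula and the rank-$d$ condition on $(a_{ki})$, and is genuinely more than a one-variable polydisc bound. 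Your proposal does not address why the vertical components of $\cY_0$ other than the strict transform of $\cX_0$ contribute nothing in the limit, which is precisely what Case (2) of the paper's proof establishes and what the uniform smallness you want ultimately rests on.
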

We make some remarks before proving this convergence. In \cite{Li13} this Lemma was proved under the assumption that the generic fibre is smooth and $\dim S=1$. Here we generalize the calculations there to the general situation. In \cite{Li13} the simpler case of Lemma \ref{kltconv} was proved by lifting the integrals on both sides to a log-resolution of singularities $\pi:\widetilde{\cX}\rightarrow \cX$ and calculating carefully under the normal crossing coordinates. There it was proved that the limit of the left hand side of \eqref{lvolconv} as $t\rightarrow 0$ concentrates on the strict transform of $\cX_0$ under $\pi$ which coincides with the right hand side of \eqref{lvolconv}. This concentration phenomenon essentially only depends on a fundamental result in birational algebraic geometry: {\em inversion of adjunction}, which says that, in the ${\rm dim}_{\mathbb{C}}S=1$ case,  if $\cX_0$ is Kawamata-log-terminal (klt), then the pair $(\cX, \cX_0)$ is purely-log-terminal (plt) in a neighborhood of $\cX_0$. The klt property holds in our situation by \cite{BBEGZ,DS2012} because each $(\cX_t, \omega_t)$ is a K\"{a}hler-Einstein Fano variety. The plt property is expressed in terms of $a(\cX, \cX_0; E)>-1$ for any exceptional divisor $E$ of $\pi$ which does not have center on $\cX_0$. It's well known
that this lower bound of discrepancy (or complex exponent) implies an integrability condition, which turns out to be enough for us to apply dominant convergence theorem on the log-resolution $\widetilde{\cX}$ to get uniform integrability and confirm the convergence in \eqref{lvolconv}. 
\begin{rem}
The calculation of a similar kind was first carried out in \cite{Be12} and was then sharpened in \cite{Li13}. \Blue{ Indeed, a related continuity of Ding energy was speculated in \cite{Be12} and its importance was pointed out to the first author by Berman \cite{Be13}. }
\end{rem}

Here we use the similar arguments to deal with the higher codimensional case. We need to use a form of inversion of adjunction for higher codimensional klt subvariety \eqref{distotal}-\eqref{ivacoef}. Moreover, we need to use
the existence of toroidal reduction of family $\cX\rightarrow S$ proved by Abramovich-Karu (\cite{AK00}) to replace the role of log resolution in $\dim_{\mathbb{C}}S=1$ case.

Let's start by applying the toroidal reduction constructed in \cite{AK00} to obtain the following commutative diagram:
\begin{equation}\label{commut}
\xymatrix{
  \cY\ar[dr]_{\pi_{\mY}} \ar[r]^{\mu} &\cX\times_{S}T \ar[d]^{\pi} \ar[r]^{m_{\cX}} &
                    \cX \ar[d]_{\pi} \\
  & T  \ar[r]^{m_S} & S      
  }
\end{equation}
such that \Blue{$\cY$ and $T$ admit toroidal structures}, $\mu$ and $m_S$ are birational morphisms, and $\pi_\mY$ is a \Blue{flat} {\em toroidal} map.  It's clear that we just need to verify \eqref{kltconv} for $\cY\rightarrow T$, since $\cX\times_ST\rightarrow \Blue{T}$ is still a flat family of klt Fano varieties. So without of loss of generality we assume $T=S$ and fix a point $0\in S$ \Blue{from} now on. We will lift the calculation of integrals and limits to the space $\mY\rightarrow T=S$.

\Blue{Assume $\dim T=d$ such that $\dim \cY=n+d$}. Choose general hyperplane divisors $L_k$ on $T$ and $H_k$ its pull back on $\cX$ so that $\cX_0=\bigcap_{k=1}^{\Blue{d}} H_k$. Since $\mY_0$ and $\cX_0$ have the same dimension and $\mY_0\to \cX_0$ has connected fibers, we know $\mY_0$ has a component $\cX'_0$ which is  the strict transform of $\cX_0$ under $\mu$. Furthermore, the components of $\bigcap_{k=1}^d H_k'$ are normal and yields the log canonical centers of the sub-lc pair $(\mY, -K_{\mY/\cX}+\mu^*(\sum H^d_{k=1}))$  where $H_k'$ is the strict transform of $H_k$, then we indeed know that $\bigcap_{k=1}^d H_k'$ is irreducible because $\cX_0$ is a minimal log canonical center of $(\cX, \sum^d_{k=1}H_{k} )$.

We will need the following equalities defining the multiplicities denoted by $a_{ki}$:
\begin{equation}\label{pullHk}
\mu^* H_k =\pi_{\mY}^* L_k= H_k'+\sum_{i=1}^{\Blue{I}} a_{ki} E_i.
\end{equation}
Now comes the key ingredient. We write down the identity defining the discrepancies:
\begin{equation}\label{distotal}
K_{\cY/S}+\sum_{k=1}^{\Blue{d}} H'_k=\mu^*(K_{\cX/S}+\sum_{k=1}^{\Blue{d}} H_k)-\sum_{i=1}^{I} b_i E_i-\sum_{j=1}^{J} c_j F_j.
\end{equation}
where $E_i$ are vertical exceptional divisors and $F_j$ are horizontal exceptional divisors. Because $\cX_0$ is klt, by {\em inversion of adjunction}, we have:
\begin{equation}\label{ivacoef}
b_i<1, \text{ for } 1\le i\le I; \quad c_j<1, \text{ for } 1\le j\le J.
\end{equation}
Combining \eqref{distotal} and \eqref{pullHk} we also get:
\begin{equation}\label{distotal2}
K_{\cY/S}=\mu^*K_{\mX/S}-\sum_{i=1}^I \left(b_i-\sum_{k=1}^{d}a_{ki}\right)E_i-\sum_{j=\Blue{1}}^{\Blue{J}} c_j F_j.
\end{equation}
In the following, we will denote:
\[
a_i=\sum_{k=1}^d a_{ki}, \quad 1\le i\le I.
\]
As explained in \cite{Li13}, using the partition of unity argument, it's enough to show the following local convergence properties:
\begin{equation}\label{localconv}
\lim_{t\rightarrow 0}\int_{\cY\cap \mU(p, \delta)}\mu^*(v\wedge\bar{v})^{1/m}=\int_{\cY\cap \mU(p, \delta)} \mu|_{\cY}^*(v\wedge\bar{v})^{1/m}.
\end{equation}
where $p$ is any point in $\cY_0$, $\mU(p, \delta)$ is a small neighborhood of $p$ inside $\cY$ and $v$ is a local generator of $\mathcal{O}_{\cX}(-mK_{\cX/S})(\mU(p,\delta))$.

We will generalize the calculations as in \cite[Section 4]{Li13} to verify \eqref{localconv}. For any point
$p\in \cY_0$, there are 2 possibilities:
\begin{enumerate}
\item $p\in \cX_0'\cap \bigcap_{i=1}^{N_v} E_i\cap\bigcap_{j=1}^{N_h} F_j$.
\item $p\in \left(\bigcap_{i=1}^{N_v} E_i\cap \bigcap_{j=1}^{N_h} F_j\right)\setminus \cX_0'$.
\end{enumerate}
$N_v=N_v(p)$ (resp. $N_h=N_h(p)$) is the number of vertical (resp. horizontal) exceptional divisors passing through $p$. So if $N_v=0$ (resp. $N_h=0$), then there are no vertical (resp. horizontal) exceptional divisor passing through $p$ and the corresponding intersection does not appear.  
\begin{enumerate}
\item {\bf Case (1)}: Using the toroidal property of the map $\pi_{\cY}$,
we can choose local coordinates $\{(x_1, \dots, x_d; y_1,\dots, y_n)\}=:\{x, y\}$ {which are regular functions on $\cY$} near $p$ and $\{t_1,\dots,t_d\}$ near $0\in S$ such that
\begin{itemize}
\item
$x(p)=y(p)=0$,
\item locally $L_k=\{t_k=0\}$,
$H_ {k}'=\{x_k=0\} (1\le k\le d)$, $E_j=\{y_{\Red {j}}=0\} (1\le i\le N_v)$ and $F_j=\{y_{j}=0\} (N_v+1\le j\le N_v+N_h)$.
\item Since the pull back of $L_k$ is the sum of the reduced divisor $H_{k}'=0$ and other components, by \eqref{pullHk} the map $\pi_{\cY}$ is locally given as
\begin{equation}\label{xtot}
\begin{array}{lcl}
t_1&=& g_1(x,y)\cdot x_1\prod_{i=1}^{N_v}y_i^{a_{1i}}\\
&\dots& \\
t_{d}&=& g_d(x,y)\cdot x_d \prod_{i=1}^{N_v}y_i^{a_{di}}
\end{array}
\end{equation}
\end{itemize}
where $g_i(x,y) (1\le i\le d)$ are non vanishing holomorphic functions. For the simplicity of notations, we will assume $g_i(x,y)=1$ since it will be easy to modify the calculation for general non vanishing
$g_i(x,y)$.

{Now notice that the space $\mathcal{Y}$ in general has toric singularities which are good enough for us to carry out the local calculations by locally lifting to finite covers. So possibly by passing to finite covers, } let's consider the polydisk region:
\[
\mU(p, \delta)=\{|x_i|\le \delta, |y_j|\le\delta;\; 1\le i\le d, 1\le j\le n \}.
\]
When $t_i\neq 0\; (1\le i\le d)$, we can choose $\{y_1, \dots, y_n\}$ as the local coordinate system on the local fibre $\mU_t(p, \delta)=\mU(p,\delta)\cap \cY_t$:
\begin{equation}\label{txtox0}
x_i=x_i(t, y_1, \dots, y_n)=\frac{t_i}{\prod_{j=1}^{N_v} y_j^{a_{ij}}}, \quad 1\le i\le d.
\end{equation}
So when $t_i\neq 0\; (1\le i\le d) $, $\mU_t(p, \delta)$ is biholomorphic to the following region in the $y$-space via the projection:
\begin{equation}
\mV_t(\delta):=\left\{y=(y_1, \dots, y_n); \; |y_j|\le \delta, 1\le j\le n, \prod_{j=1}^{N_v} |y_j|^{a_{ij}}\ge |t_i|\delta^{-1}, 1\le i\le d\right\}\\
\end{equation}
Note that $\{\mV_t(\delta)\}$ is an increasing sequence of sets on the $y$-space with respect to the variable $t$. The limit is:
\[
\lim_{t\rightarrow 0}\mV_t(\delta)=\{y=(y_1, \dots, y_n)\in \mathbb{C}^n; \; |y_j|\le \delta, j=1, \dots, n\}=: \mV_0(\delta).
\]
Now choose a local generator $v=\{v_t\}$ of $mK_{\cX/\mathbb{S}}$ near $q=\mu(p)$.
\begin{equation}\label{pbv}
\mu^*\left(v^{1/m}\right)=g(x, y)\prod_{i=1}^{N_v}y_i^{a_i-b_i}\prod_{j=N_v}^{N_v+N_h}y_{j}^{-c_j} (dx\wedge dy\otimes \partial_t).
\end{equation}
Taking adjunction's $m$-times, we get:
\begin{equation}\label{algadj}
K_{\cX_0'}=\mu|_{\cX_0'}^* K_{\cX_0}-\sum_{i=1}^{\Blue{I}} b_i E_i|_{\cX_0'}-\sum_{j=1}^{\Blue{J}} c_j F_j|_{\cX_0'}.
\end{equation}
It will be useful for us to see this adjunction analytically. We will denote
\[
\cZ_k=\bigcap_{1\le l\le k} H'_l=\{ {x_l=0}\} \text{ for } k=1,\dots, d.
\]
Then
\[
\cX_0'=\cZ_d\subset \Blue{\cZ_{d-1}}\subset\dots \subset \cZ_1.
\]
By \eqref{xtot} and \eqref{pbv}, we get:
\begin{eqnarray*}
\mu|_{\cZ_1}^*(v^{1/m})&=&\left.g(x,y)\prod_{i=1}^{N_v} y_i^{a_i-b_i}\prod_{j=N_v+1}^{N_v+N_h} y_j^{-c_j} \frac{dt_1}{\prod_{i=1}^{N_v}y_i^{a_{1i}}}
\bigwedge_{k=2}^{m} dx_k \Blue{\wedge dy} \otimes \partial_{t_1}\bigwedge_{k=2}^m  \partial_{t_k}\right|_{\cZ_1}\\
&=&g(0,x_2,\dots, x_m, y)\prod_{i=1}^{N_v} y_i^{(a_i-a_{1i})-b_i}\prod_{j=N_v+1}^{N_v+N_h} y_j^{-c_j}\left(\bigwedge_{k=2}^m dx_k\Blue{\wedge dy}  \otimes \partial_{t_k}\right).
\end{eqnarray*}
Inductively, we indeed get the analytic formula corresponding to \eqref{algadj}:
\[
\mu|_{\cX_0'}(v^{1/m})=g({\bf 0}, y)\prod_{i=1}^{N_v} y_i^{-b_i}\prod_{j=N_v+1}^{N_v+N_h} y_j^{-c_j} dy.
\]
For the same reasons,  the local volume form along the fibre in \eqref{pbv} restricted $\mU_t$ becomes:
\begin{eqnarray*}
\left.\mu^*\left(v^{1/m}\right)\right|_{\mU_t}
&=&g(x(t,y), y)\prod_{i=1}^{N_v} y_i^{-b_i} \prod_{j=N_v+1}^{N_v+N_h} y_{j}^{-c_j}  dy.
\end{eqnarray*}
So
\[
\mu^*(v\wedge\bar{v})^{1/m}=\pm |g(x(t,y),y)|^2\prod_{i=1}^{N_v}|y_i|^{-2b_i}\prod_{j=N_v+1}^{N_v+N_h}|y_j|^{-2c_j}\cdot dy\wedge d\bar{y}.
\]
By \eqref{txtox0}, $\lim_{t\rightarrow 0}x(t, y)={\bf 0}$. So we see that for any $y\in \mV_t(\delta)$, we have:
\begin{eqnarray*}
\lim_{t\rightarrow 0}\mu^*(v\wedge \bar{v})^{1/m}&=&\pm |g({\bf 0}, y)|^2\prod_{i=1}^{N_v}|y_i|^{-2b_i}\prod_{j=N_v+1}^{N_v+N_h}|y_j|^{-2c_j}\cdot dy\wedge d\bar{y}\\\
&=&\mu|_{\cX_0'}^*(v_0\wedge \bar{v}_0)^{1/m}.
\end{eqnarray*}
Now it's straightforward to use the dominant convergence theorem to verify that (see \cite[(43)]{Li13}):
\begin{eqnarray*}
\lim_{t\rightarrow 0}\int_{\mU(p, \delta)\cap\widetilde{\cX}_t}\mu^*(v\wedge\bar{v})^{1/m}&=&\pm\lim_{t\rightarrow 0}\int_{\mV_t(\delta)}\frac{|g(x(t,y),y)|^2}{\prod_{i=1}^{N_v}|y_i|^{2b_i}\prod_{j=N_v+1}^{N_v+N_h}|y_j|^{2c_j}} dy\wedge d\bar{y}\\
&=&\pm\int_{\mV_0(\delta)}\frac{|g({\bf 0},y)|^2}{\prod_{i=1}^{N_v}|y_i|^{2b_i}\prod_{j=N_v+1}^{N_v+N_h}|y_j|^{2c_j}} dy\wedge d\bar{y}\\
&=&\int_{\tcX_0\cap \mU(p,\delta)}\mu|^*_{\cX_0'}(v\wedge\bar{v})^{1/m}.
\end{eqnarray*}
Notice that here we need to use the crucial fact from \eqref{ivacoef} that $b_i<1$ and $c_j<1$, which follows from the {\em inversion of adjunction}.

\item  {\bf Case 2: }
There are sub cases: The number of $H_i's$ containing $p$ is equal to $l$ for some $0\le l\le d-1$. In each sub case, we can choose coordinates $\{ x_{1}, \dots, x_{l}; y_1, \dots, y_{n+d-l})=:(x,y)$ on $\cY$ and $(t_1,\dots, t_{\Blue{d}})=:t$ on $S$ such that the toroidal map $\pi_{\cY}$ is defined by:
\begin{equation}\label{subcasel}
\begin{array}{lcllcl}
t_1&=& x_1 &\prod_{i=1}^{N_v}y_i^{a_{1i}} &\cdot& g_1(x,y);\\
&\dots& &\\
t_l&=&  x_l &\prod_{i=1}^{N_v}y_i^{a_{li}}&\cdot& g_l(x,y);\\
t_{l+1}&=& &\prod_{i=1}^{N_v} y_i^{a_{(l+1)i}}&\cdot& g_{l+1}(x,y); \\
&\dots& &\\
t_{d}&=& &\prod_{i=1}^{N_v}y_i^{a_{di}}&\cdot& g_{d}(x,y).
\end{array}
\end{equation}
Here $g_i(x,y)$ are non vanishing holomorphic functions. As before, we only deal with the case when $g_i(x,y)\equiv 1$ since the modification to the general case will be straightforward.
Also we will only consider the extremal case: $l=0$, because it will be clear that the other cases are mixture of Case (1) and this extremal case. So in the following we assume the following equalities hold:
\begin{equation}\label{casextremal}
\begin{array}{lccl}
t_1&=& &\prod_{i=1}^{N_v}y_i^{a_{1i}}\\
&\dots& & \\
t_{d}&=& &\prod_{i=1}^{N_v}y_i^{a_{di}}
\end{array}
\end{equation}
Since the map $\pi_{\mY}$ is dominant, we know that the matrix $(a_{ki})$ is of rank $d$.
\Blue{Consider again the following polydisk region by passing to local toric covers}:
\[
\mU(p, \delta)=\{|y_i|\le \delta,  1\le i\le n+d\}.
\]
We will show that the integral over $\mU_t=\mU\cap \cY_t$ converges to 0 as $t\rightarrow 0$.
Similar as in \cite{Li13}, it will be convenient to use the logarithmic coordinates. So we denote $t_k=e^{s_k}e^{\sqrt{-1}\phi_k}=e^{\tau_k}$, $y_i=e^{u_j}e^{\sqrt{-1}\theta_j}=e^{w_i}$, and \eqref{casextremal} becomes
\begin{equation}
s_k=\sum_{i=1}^{N_v} a_{ki} u_i, \quad 1\le k\le d.
\end{equation}
Then it's easy see that we have
\[
\mU_t(p, \delta)\cong \underline{\mV}_{t}(\delta)\times (S^1)^{N_v-d}\times \left\{|y_j|\le\delta, N_v+1\le j\le n+d \right\},
\]
where the first factor on the right is a {\em bounded} polytope:
\[
\underline{\mV}_s:= \underline{\mV}_{s}(p, \delta)=\left\{ u_i<-\log \delta, \sum_{i=1}^{N_v} a_{ki}u_i=s_k, 1\le k\le d \right\}\subset \mathbb{R}^{N_v}.
\]
For the pull-back of holomorphic form, we have the similar formula as in \eqref{pbv} which follows from \eqref{distotal2}:
\[
\mu^*(v^{1/m})=g(y)\prod_{i=1}^{N_v} y_i^{a_i-b_i}\prod_{j=N_v+1}^{N_v+N_h}y_j^{-c_j} (dy\otimes\partial_{t}).
\]
To transform into logarithmic coordinates, we use:
\[
y_i^{a_i-b_i}dy_i=e^{(1+a_i-b_i)w_i}dw_i, \quad \partial_{t_k}=\frac{\partial_{\tau_k}}{t_k}=\frac{\partial_{\tau_k}}{\prod_{j=1}^{N_v} y_j^{a_{kj}}}=\frac{\partial_{\tau_k}}{\prod_{j=1}^{N_v} e^{a_{kj}w_j}}.
\]
So we can get:
\[
\mu^*(v^{1/m})=\pm g(y)\left(\prod_{i=1}^{N_v} e^{(1-b_i)w_i} \bigwedge_{i=1}^{N_v}dw_i\otimes \partial_{\tau}\right) \bigwedge_{j=N_v+1}^{N_v+N_h} y_j^{-c_j} dy_j \wedge dy',
\]
\Blue{where $dy'=\bigwedge_{j=N_v+N_h+1}^{n+d}dy_j$.
So we have:
\begin{eqnarray*}
\mu^*(v\wedge \bar{v})^{1/m}&=&|g(y)|^2 \left(\bigwedge_{i=1}^{N_v}e^{2(1-b_i)u_i}du_i\otimes \bigwedge_{k=1}^d \partial_{s_k}\right)\left(\bigwedge_{i=1}^{N_v} d\theta_i\otimes\bigwedge_{k=1}^d \partial_{\phi_k}\right)\wedge \\
&&\left(\bigwedge_{j=N_v+1}^{N_v+N_h} |y_j|^{-2c_j}dy_j\wedge d\bar{y}_j\right)\wedge dy'\wedge d\overline{y'}.
\end{eqnarray*}
Notice that since $c_j<1$, $|y_j|^{-2c_j}dy_j\wedge d\overline{y}_j$ is integrable. }So we just need to estimate:
\begin{equation}\label{polest}
\int_{\underline{\mV}_s}\bigwedge_{i=1}^{N_v} e^{2(1-b_i)u_i}du_i\otimes\bigwedge_{k=1}^d \partial_{s_k}.
\end{equation}
Note that $\underline{\mV}_s$ is a $(N_v-d)$-dimensional polytope in $\mathbb{R}^{N_v}$ defined by linear functions. By co-area formula, we know that
\[
\bigwedge_{i=1}^{N_v} du_i=\frac{1}{A}{\rm dvol}\otimes \bigwedge_{k=1}^{\Blue{d}} d s_k.
\]
Here we have denoted by ${\rm dvol}$ the Euclidean volume form on $\underline{\mV}_s$, and $A=\det\left(\la{\bf a}_k, {\bf a}_l\ra\right)^{1/2}$, where
\[
{\bf a}_k=\{ a_{ki}\}=\nabla s_k, \quad \la {\bf a}_k, {\bf a}_l\ra=\sum_{i=1}^{N_v} a_{ki}a_{li}.
\]
So we see that the integral in \eqref{polest} is equal to:
\begin{equation}\label{coarea}
\frac{1}{A}\int_{\underline{\mV}_s}\prod_{i=1}^{N_v} e^{2(1-b_i)u_i} {\rm dvol}.
\end{equation}
Using $b_i<1$ for $1\le i\le N_v$, it's now an easy exercise to verify that the integral in \eqref{coarea} converges {\em uniformly} to 0 as $s\rightarrow -\infty$ (meaning $s_k \rightarrow -\infty$ uniformly with respect to $k$).

As mentioned before, for the general sub cases in \eqref{subcasel}, we can first use calculus of adjunction in Case (1) $l$-times to kill the variables $x_1, \dots, x_l$ and reduce to the extremal sub case in Case (2). So we know that the contribution in all sub cases of Case (2) are indeed 0 as $t\rightarrow 0$.
\end{enumerate}

\begin{bibdiv}
\begin{biblist}

\bib{AK00}{article}{
author={Abramovich, D.},
author={Karu, K.},
title={Weak semistable reduction in characteristic 0},
journal={Invent. math.},
volume={139},
year={2000},
pages={241-273},
}

\bib{AFSV14}{article}{
author={Alper, Jarod }
author={Fedorchuk, Maksym }
author={Smyth, David Ishii }
author={Van der Wyck, Frederick}
title={Log minimal model program for the moduli space of stable curves: the second flip}
journal={arXiv:1308.1148}
}

\bib{Alp13}{article}{
author={Alper, Jarod }
title={Good moduli spaces for Artin stacks}
journal={Ann. Inst. Fourier (Grenoble)}
date={2013}
volume={63}
number={6}
pages={2349-2042}}

\bib{Be10}{article}{
author={Berman, Robert J.},
title={Relative Kahler-Ricci flows and their quantization},
journal={arXiv:1002.3717},
year={2010},
}

\bib{Be12}{article}{
  author={Berman, Robert J.}
  title={K-polystability of $\QQ$-Fano varieties admitting Kahler-Einstein metrics}
  journal={ arXiv:1205.6214}
  date={2012}
}

\Blue{
\bib{Be13}{article}{
author={Berman, Robert J.},
title={Private communication},
year={2013},
}
}

\bib{BBEGZ}{article}{
  author={Berman, Robert J.}
  author={Boucksom, S\'ebstien}
  author={Eyssidieux,Philippe}
  author={Guedj, Vincent}
  author={Zariahi, Ahmed}
  title={K\"a hler-Einstein metrics and the KŠhler-Ricci flow on log Fano varieties}
  date={2011}
  journal={arXiv:1111.7158}
}

\bib{BG13}{article}{
  author={Berman, Robert J.}
   author={Guenancia, Henri}
  title={K\"ahler-Einstein metrics on stable varieties and log canonical pairs}
  journal={ arXiv:1304.2087, to appear in GAFA}
  date={2014}
}

\bib{Bir13}{article}{
author={Birkar, Caucher},
title={The augmented base locus of real divisors over arbitrary fields},
journal={arXiv:1312.0239},
year={2013},
}


\bib{CC97}{article}{
author={Cheeger, J.},
author={Colding, T.H.},
title={On the structure of spaces with Ricci curvature bounded below I},
journal={J. Diff. Geom.},
volume={45},
year={1997},
pages={1-75},
}

\bib{CDS1}{article}{
 author={Chen, Xiuxiong}
 author={Donaldson, Simon}
 author={Sun, Song}
 title={K\"ahler-Einstein metrics on Fano manifolds. I: Approximation of metrics with cone singularities.},
 journal={To appear in J. Amer. Math. Soc.  },
 volume={},
 number={},
 date={2014},
 pages={},
 issn={0003-486X}
 }

\bib{CDS2}{article}{
 author={Chen, Xiuxiong}
 author={Donaldson, Simon}
 author={Sun, Song}
 title={K\"ahler-Einstein metrics on Fano manifolds. II: Limits with cone angle less than $2\pi$.},
 journal={To appear in J. Amer. Math. Soc.  },
 volume={},
 number={},
 date={2014},
 pages={},
 issn={0003-486X}
 }

\bib{CDS3}{article}{
 author={Chen, Xiuxiong}
 author={Donaldson, Simon}
 author={Sun, Song}
 title={K\"ahler-Einstein metrics on Fano manifolds. III: Limits as cone angle approaches $2\pi$ and completion of the main proof.},
 journal={To appear in J. Amer. Math. Soc.  },
 volume={},
 number={},
 date={2014},
 pages={},
 issn={0003-486X}
 }

\bib{Del87}{article}{
author={Deligne, P.},
title={Le D\'{e}terminant de la Cohomolgie},
journal={Current Trends in Arithmetrical Algebraic Geometry, Contemp. Math.},
volume={67},
year={1987},
pages={93-177},
publisher={Amer. Math. Soc., Providence, RI},
}

\bib{Dem}{article}{
author={Demailly, J.P.},
title={Complex Analytic and Differential Geometry},
journal={http://www.fourier.ujf-grenoble.fr/demailly/manuscripts/agbook.pdf},
year={2012}
}


 \bib{Don00}{article}{
 author={Donaldson, Simon K.},
 title={Scalar curvature and stability of toric varieties.}
 journal={J. Differential Geom.},
 volume={62},
 year={2002},
 pages={289-349},
 }
	
\bib{Don01}{article}{
author={Donaldson, Simon K.},
title={Scalar curvature and projective embeddings, I,},
journal={J. Differential Geom.},
volume={59},
pages={479-522},
year={2001},
}

\bib{Don2008}{article}{
author={Donaldson, Simon K.},
title={K\"ahler geometry on toric manifolds, and some other manifolds with large symmetry.}
note={Adv. Lect. Math. (ALM), Handbook of geometric analysis. No. 1, Int. Press, Somerville, MA}
volume={7}
year={2008}
pages={29-75}
}

\bib{Don10}{article}{
author={Donaldson, Simon K.},
title={Stability, birational transformations and the K\"{a}hler-Einstein problem},
journal={Surveys in Differential Geometry, arXiv:1007.4220},
year={2010},
}

\bib{Don2013}{article}{
author={Donaldson, Simon K.},
title={Volume estimates, Chow invariants and moduli of K\"ahler-Einstein metrics}
journal={http://www.math.u-psud.fr/~repsurf/ERC/Bismutfest/Bismutfest.html}
date={2013}
}

\bib{DS2012}{article}{
   author={Donaldson, Simon K.},
   author={Sun, Song},
   title={Gromov-Hausdorff limits of K\"ahler manifolds and algebraic geometry},
   journal={Acta Math. },
   volume={213}
   number={1},
   date={2014},
   pages={63-106},
   issn={0073-8301},
   review={\MR{0498551 (58 \#16653a)}},
}

\bib{DN89}{article}{
author={Drezet, J.-M.},
author={Narasimhan, M. S.}
title={Groupe de Picard des vari\'et\'es de modules de fibr\'es semi-stables sur les courbes alg\'ebriques.}
journal={Invent. Math. }
volume={97}
number={1}
year={1989}
pages={53-94}
}

\bib{DT92}{article}{
author={Ding, W.},
author={Tian, G.},
title={K\"{a}hler-Einstein metrics and the generalized Futaki invariant},
journal={Invent. Math.},
volume={110},
number={2},
year={1992},
pages={315-335},
}

\bib{EGZ11}{article}{
author={Eyssidieux, P.},
author={Guedj, V.},
author={Zeriahi, A.},
title={viscosity solutions to degenerate complex Monge-Amp\`{e}re equations},
journal={Comm. Pure Appl. Math.},
volume={64},
year={2011},
number={8},
pages={1059-1094},
}

\bib{FN80}{article}{
author={Fornaess, J. E.},
author={Narasimhan, R.},
title={The Levi problem on complex spaces with singularities},
journal={Math. Ann.},
volume={248},
year={1980},
pages={47-72},
}

\bib{FS90}{article}{
author={Fujiki,A.},
author={Schumacher,G.},
title={The moduli space of extremal compact K\"{a}hler
manifolds and generalized Weil-Petersson metrics},
journal={Publ. Res. Inst. Math.},
volume={26},
year={1990},
pages={101-183},
}

\bib{Fuj12}{article}{
author={Fujino, O},
title={Semipositivity theorems for moduli problems},
journal={arXiv:1210.5784},
year={2012},
}

\bib{Fut83}{article}{
author={Futaki, A.},
title={An obstruction to the existence of Einstein K\"{a}hler metrics},
journal={Inventiones Mathematicae},
pages={437-443},
issue={73},
year={1983},
}

\bib{Fut90}{article}{
author={Futaki, A.},
title={K\"{a}hler-Einstein metrics and integral invariants},
journal={Lecture Notes in Mathematics},
number={1314},
year={1990},
publisher={Springer-Verlag}
}

\bib{Gra62}{article}{
author={Grauert, Hans},
title={\"{U}ber Modifikationen und exzeptionelle analytische Mengen},
journal={Math. Annalen},
volume={146},
year={1962},
pages={331-368},
}

\bib{GR56}{article}{
author={Grauert, H.},
author={Remmert, R.},
title={Plurisubharmonische Funktionen Mengen R\"{a}umen},
journal={Math. Z.},
volume={65},
year={1956},
pages={175-194},
}

\bib{Kollar90}{article}{
author={Koll{\'a}r, J{\'a}nos},
title={Projectivity of complete moduli},
journal={J. Differ. Geom.},
volume={32},
pages={235-268},
year={1990},
}

\bib{Kollar06}{article}{
author={Koll{\'a}r, J{\'a}nos},
title={non-quasi-projective moduli spaces},
journal={Ann. Math.},
volume={164},
pages={1077-1096},
year={2006},
}

\bib{Kollar13}{article}{
    AUTHOR = {Koll{\'a}r, J{\'a}nos},
     TITLE = {Moduli of varieties of general type},
 BOOKTITLE = {Handbook of moduli. {V}ol. {II}},
    SERIES = {Adv. Lect. Math. (ALM)},
    VOLUME = {25},
     PAGES = {131--157},
 PUBLISHER = {Int. Press, Somerville, MA},
      YEAR = {2013},
  }

\bib{KMM92}{article}{
    AUTHOR = {Koll{\'a}r, J{\'a}nos}
    author={Miyaoka, Yoichi}
    author={ Mori, Shigefumi},
     TITLE = {Rational connectedness and boundedness of {F}ano manifolds},
   JOURNAL = {J. Differential Geom.},
  FJOURNAL = {Journal of Differential Geometry},
    VOLUME = {36},
      YEAR = {1992},
    NUMBER = {3},
     PAGES = {765--779},
}


\bib{Li13}{article}{
author={Li, Chi},
title={Yau-Tian-Donaldson correspondence for K-semistable Fano manifolds},
journal={arXiv:1302.6681v5},
year={2013},
}

\bib{LWX}{article}{
author={Li, Chi},
author={Wang, Xiaowei},
author={Xu, Chenyang},
title={Degeneration of Fano Kahler-Einstein manifolds},
journal={arxiv:1411.0761},
year={2014},
}


\bib{Mor99}{article}{
author={Moriwaki, A.},
title={The continuity of Deligne's pairing},
journal={Internat. Math. Res. Notices},
number={19},
year={1999},
pages={1057-1066},
}

\bib{Nak00}{article}{
    AUTHOR = {Nakamaye, Michael},
     TITLE = {Stable base loci of linear series},
   JOURNAL = {Math. Ann.},
  FJOURNAL = {Mathematische Annalen},
    VOLUME = {318},
      YEAR = {2000},
    NUMBER = {4},
     PAGES = {837--847},
      ISSN = {0025-5831},
}

\bib{Odaka14a}{article}{
 author={Odaka, Yuji},
 title={On the moduli of K\"ahler-Einstein Fano manifolds},
 journal={ Proceeding of Kinosaki algebraic geometry symposium 2013, arXiv:1211.4833. },
 volume={},
 number={},
 date={2012},
 pages={},
 issn={}
 }

\bib{Odaka14b}{article}{
author={Odaka, Yuji},
title={Compact moduli space of K\"ahler-Einstein Fano varieties},
journal={arXiv:1412.2972},
year={2014}}

\bib{OSS}{article}{
 author={Odaka, Yuji}
  author={Spotti, Cristiano }
   author={Sun, Song}
 title={Compact Moduli Spaces of Del Pezzo Surfaces and K\"ahler-Einstein metrics},
 journal={arXiv:1210.0858 },
year={2012}
}

\bib{OSY}{article}{
    AUTHOR = {Ono, Hajime}
    author={Sano, Yuji}
    author={Yotsutani, Naoto},
     TITLE = {An example of an asymptotically {C}how unstable manifold with
              constant scalar curvature},
   JOURNAL = {Ann. Inst. Fourier (Grenoble)},
    VOLUME = {62},
      YEAR = {2012},
    NUMBER = {4},
     PAGES = {1265--1287},
   }

\bib{Pa2012}{article}{
 author={Paul, Sean}
 title={CM stability of projective varieties},
 journal={arXiv:1206.4923 },
 volume={},
 number={},
 date={2012},
 }

\bib{PS04}{article}{
author={Phong, D.H.},
author={Sturm, Jacob},
title={Scalar Curvature, moment maps, and the Deligne pairing},
journal={American Journal of Mathematics},
volume={126},
number={3},
year={2004},
pages={693-712},
}

\bib{PRS08}{article}{
author={Phong, D.H.},
author={Ross, J. },
author={Sturm, J},
title={Deligne pairing and the Knudsen-Mumford expansion},
journal={J. Differential Geom.},
volume={78},
year={2008},
number={ 3},
pages={475-496},
}

\bib{PT09}{article}{
author={Paul, S.},
author={Tian, G.},
title={CM stability and the
generalized Futaki invariant II},
journal={Ast\'{e}risque},
volume={328},
pages={339-354},
year={2009},
}

\bib{RG71}{article}{
    AUTHOR = {Raynaud, Michel}
    AUTHOR={Gruson, Laurent},
     TITLE = {Crit\`eres de platitude et de projectivit\'e. {T}echniques de
              ``platification'' d'un module},
   JOURNAL = {Invent. Math.},
  FJOURNAL = {Inventiones Mathematicae},
    VOLUME = {13},
      YEAR = {1971},
     PAGES = {1--89},
      ISSN = {0020-9910},
   MRCLASS = {14A15 (13C10)},
  MRNUMBER = {0308104 (46 \#7219)},
MRREVIEWER = {M. Maruyama},
}

\bib{Sch12}{article}{
author={Schumacher, Georg}
title={Positivity of relative canonical bundles and applications},
journal={Invent. Math.},
year={2012},
volume={190},
pages={1-56},
}

\bib{Sch13}{article}{
author={Schumacher, Georg}
title={Erratum: Positivity of relative canonical bundles and applications},
journal={Invent. Math.},
year={2013},
volume={192},
pages={253-255},
}



\bib{SSY}{article}{
author={Sun, Song},
author={Spotti, Cristiano},
author={Yao, Chengjian},
title={Existence and deformations of Kahler-Einstein metrics on smoothable $\QQ$-Fano varieties},
journal={arXiv:1411.1725},
year={2014},
}

\bib{ST04}{article}{
author={Schumacher, G.},
author={Tsuji, H.},
title={Quasi-projectivity of moduli spaces of polarized varieties},
journal={Ann. Math.},
volume={159},
year={2004},
pages={597-639},
}

\bib{Sz2010}{article}{
author={Sz\'{e}kelyhidi, Gabor}
title={The K\"ahler-Ricci flow and K-polystability}
volume={132}
number={4}
journal={Amer. J. Math. }
year={2010}
pages={1077-1090}
}

\bib{Yos07}{article}{
author={Yoshikawa, K.-I.},
title={On the singularity of Qullen metrics},
journal={Math. Ann.},
year={2007},
volume={337},
pages={61-89},
}

\bib{Tian1987}{article}{
author={Tian, Gang},
title={Smoothness of the universal deformation space of compact Calabi-Yau manifolds and
its Petersson-Weil metric},
journal={Mathematical Aspects of String Theory (ed. S.-T. Yau)},
publisher={World Scientific},
year={1987},
pages={629-646},
}

\bib{Tian1990}{article}{
author={Tian, Gang}
title={On Calabi's conjecture for complex surfaces with positive first Chern class.}
journal={Invent. Math.}
volume={101}
year={1990}
number={1}
pages={101-172}
}

\bib{Tian1997}{article}{
author={Tian, Gang},
title={K\"{a}hler-Einstein metrics with positive scalar curvature},
journal={Invent. Math.},
volume={130},
year={1997},
pages={1-39},
}

\bib{Tian2000}{article}{
author={Tian, Gang},
title={Bott-Chern forms and geometric stability},
journal={Discret Contin. Dynam. Systems},
Volume={6},
year={2000},
pages={211-220},
}

\bib{Tian2012}{article}{
author={Tian, Gang},
title={Existence of Einstein metrics on Fano manifolds},
journal={Metric and Differential Geometry,}
volume={297},
year={2012},
pages={119-159},
}

\bib{Tian2014}{article}{
author={Tian, Gang},
title={K-stability and K\"{a}hler-Einstein metrics},
journal={arXiv:1211.4669},
year={2012}
}

\bib{Tian13}{article}{
    AUTHOR = {Tian, Gang},
     TITLE = {Partial {$C^0$}-estimate for {K}\"ahler-{E}instein metrics},
   JOURNAL = {Commun. Math. Stat.},
  FJOURNAL = {Communications in Mathematics and Statistics},
    VOLUME = {1},
      YEAR = {2013},
    NUMBER = {2},
     PAGES = {105--113},
     }

  \bib{Tian14}{article}{
    AUTHOR = {Tian, Gang},
     TITLE = {K-stability implies CM-stability},
   JOURNAL = {arXiv:1409.7836},
   year={2014},
     }


 \bib{Var89}{article}{
    AUTHOR = {Varouchas, J.},
     TITLE = {K\"{a}hler spaces and proper open morphisms},
   JOURNAL = {Math. Ann.},
    VOLUME = {283},
      YEAR = {1989},
       PAGES = {13-52},
     }


\bib{Vie95}{book}{
  AUTHOR = {Viehweg, Eckart},
     TITLE = {Quasi-projective moduli for polarized manifolds},
    SERIES = {Ergebnisse der Mathematik und ihrer Grenzgebiete (3) [Results
              in Mathematics and Related Areas (3)]},
    VOLUME = {30},
 PUBLISHER = {Springer-Verlag, Berlin},
      YEAR = {1995},
 }

\bib{Zha96}{article}{
author={Zhang, Shouwu},
title={Heights and reductions of semi-stable varieties},
journal={Compos. Math. },
volume={104},
pages={77-105},
year={1996},
}

 \end{biblist}
 \end{bibdiv}

\end{document}